\title{Recollements from cotorsion pairs}
\subjclass[2010]{18E15, 18E30, 18E35, 18G55,16D90}
\keywords{Cotorsion pairs, model categories, homotopy categories, recollements}
\thanks{Research supported by grant BIRD163492 ``Categorical homological methods in the study of algebraic structure''  and grant DOR1690814``Rings and categories of modules"}
\author[S. Bazzoni]{Silvana Bazzoni}
\address[Silvana Bazzoni]
{%
Dipartimento di Matematica Tullio Levi-Civita\\
Universit\`a di Padova \\
    Via Trieste 63, 35121 Padova (Italy)}
\email{bazzoni@math.unipd.it}
\author[M.Tarantino]{Marco Tarantino}
\address[Marco Tarantino]
{%
Dipartimento di Matematica Tullio Levi-Civita\\
Universit\`a di Padova \\
    Via Trieste 63, 35121 Padova (Italy)}
\email{marco.tarantino@math.unipd.it}
\renewcommand{\iff}{if and only if }
\newcommand{\wrt}{with respect to }
\newcommand{\sto}{ {\v{S}}{\v{t}}ov{\'{\i}}{\v{c}}ek}
\newcommand{\Sto}{\v{S}\v{t}ov\'{\i}\v{c}ek}
\newcommand{\bbZ}{\mathbb{Z}}
\newcommand{\Hom}{\operatorname{Hom}}
\newcommand{\ho}{\operatorname{H}}
\newcommand{\HOM}{\mathcal{H}\mathnormal{om}}
\newcommand{\Ext}{\operatorname{Ext}}
\newcommand{\A}{\mathcal{A}}
\newcommand{\B}{\mathcal{B}}
\newcommand{\C}{\mathcal{C}}
\newcommand{\D}{\mathcal{D}}
\newcommand{\E}{\mathcal{E}}
\newcommand{\F}{\mathcal{F}}
\newcommand{\G}{\mathcal{G}}
\newcommand{\I}{\mathcal{I}}
\newcommand{\K}{\mathcal{K}}
\newcommand{\clL}{\mathcal{L}}
\newcommand{\M}{\mathcal{M}}
\newcommand{\N}{\mathcal{N}}
\newcommand{\clP}{\mathcal{P}}
\newcommand{\clQ}{\mathcal{Q}}
\newcommand{\R}{\mathcal{R}}
\newcommand{\clS}{\mathcal{S}}
\newcommand{\T}{\mathcal{T}}
\newcommand{\V}{\mathcal{V}}
\newcommand{\X}{\mathcal{X}}
\newcommand{\Y}{\mathcal{Y}}
\newcommand{\W}{\mathcal{W}}
\newcommand{\Z}{\mathcal{Z}}
 \newcommand{\Ch}{\mathrm{Ch}}
\newcommand{\Modr}[1]{\mathrm{Mod}\textrm{-}{#1}}
\newcommand{\modr}[1]{\mathrm{mod}\textrm{-}{#1}}
\newcommand{\Flat}{\mathrm{Flat}}
\newcommand{\Cot}{\mathrm{Cot}}
\newcommand{\Inj}{\mathrm{Inj}}
\newcommand{\FpInj}{\mathrm{FpInj}}
\newcommand{\Proj}{\mathrm{Proj}}
\newcommand{\modR}{\mathrm{mod}\textrm{-}R}
\newcommand{\Add}{\mathrm{Add}}
\newcommand{\Prod}{\mathrm{Prod}}
\newcommand{\Ho}{\operatorname{Ho}}
\theoremstyle{plain}
\newtheorem{thm}{Theorem}[section]
\newtheorem{lem}[thm]{Lemma}
\newtheorem{prop}[thm]{Proposition}
\newtheorem{cor}[thm]{Corollary}
\newtheorem{fact}[thm]{Fact}
\theoremstyle{definition}
\newtheorem{defn}[thm]{Definition}
\newtheorem{nota}[thm]{Notation}
\newtheorem{expl}[thm]{Example}
\theoremstyle{remark}
\newtheorem{rem}[thm]{Remark}
\begin{document}

\begin{abstract} Given a complete hereditary cotorsion pair $(\mathcal{A},\mathcal{B})$ in a Grothendieck category $\mathcal{G}$,
the derived category $\D(\B)$ of the exact category $\B$ is defined as the quotient of the category $\Ch(\B)$, of unbounded complexes with terms in $\B$, modulo the subcategory $\widetilde{\B}$ consisting of the acyclic complexes with terms in $\B$ and cycles in $\B$.

We restrict our attention to the cotorsion pairs such that
$\widetilde{\mathcal{B}}$ coincides with the class $ex\B$ of the acyclic complexes of $\Ch(\G)$ with terms in $\B$. In this case the derived category $\D(\B)$
fits into a recollement

$
\xymatrix{\dfrac{ex\B}{\sim} \ar[r]^{inc} &{K(\B)} \ar@/^1pc/[l]\ar@/_1pc/[l] \ar[r]^{Q}
&{\dfrac{\Ch(\B)}{ex\B }}\ar@/^1pc/ [l] \ar@/_1pc/ [l] }
.$

We will explore the conditions under which $\mathrm{ex}\,\mathcal{B}=\widetilde{\mathcal{B}}$ and provide many examples.

Symmetrically, we prove analogous results for the exact category $\A$.

\end{abstract}

\maketitle

\setcounter{tocdepth}{1}
\tableofcontents

\section*{Introduction} The notion of cotorsion pairs goes back to the seventies when it was introduced by Salce~\cite{Sal} in the case of abelian groups.
It got an enormous impulse thanks to the discovery by Hovey~\cite{Hov07} of the bijective correspondence between abelian model structures and cotorsion pairs in abelian categories. Many examples of cotorsion pairs and the corresponding model structures have been illustrated by Gillespie \cite{G5} who also extended the notion to the case of exact categories.

A famous example of cotorsion pair is given by the pair $(\F, \C)$ where $\F$ is the class of flat objects. It gave rise to the celebrated Flat Cover Conjecture by Enochs and solved in \cite{BEE} in the case of module categories and in ~\cite{ElB} for Grothendieck categories. It is particularly important in categories with no nonzero projective objects like for instance the categories of coherent sheaves.

In \cite{Nee08} Neemann described the homotopy category of the projective objects as a localization of the homotopy category of flat objects and he obtained a recollement  with middle  term the homotopy category of flat modules. His recollement generalizes the classical one having the homotopy category of a ring $R$ as middle term, the derived category of $R$ as right term and the category of acyclic complexes modulo the homotopy relation as left term.

In this paper we exhibit many other examples of recollements of analogous type.

Our results are strongly based on the two papers ~\cite{G6} and \cite{G7} by Gillespie and also inspired by Becker's idea in \cite{Beck12} to consider triples of injective cotorsion pairs giving rise to model structures and to the corresponding recollements.

Starting from a complete hereditary cotorsion pair $(\A, \B)$ in a Grothendieck category, we consider triples of examples of injective and projective cotorsion pairs on the categories of unbounded complexes with components in the exact categories $\A$ or $\B$.
The examples are chosen in order that the associated model structures on the categories of complexes satisfy the assumptions allowing to build the relevant recollements.

Our aim is mainly to describe the homotopy categories $K(B)$ or $K(\A)$.

Imposing some mild conditions on a Grothendieck category $\G$ (which are always satisfied by module categories), Theorem~\ref{T:triple-B} and Theorem~\ref{T:triple-A} give recollements
\vskip0.7cm
\[
\xymatrix{\dfrac{ex\B}{\sim} \ar[rr]^{inc} &&{K(\B)} \ar@/^2pc/[ll]\ar@/_2pc/[ll] \ar[rr]^{Q}
&&{\dfrac{\Ch(\B)}{ex\B }}\ar@/^2pc/ [ll] \ar@/_2pc/ [ll] }
,\]
\vskip0.7cm

\[
\xymatrix{\dfrac{ex\A}{\sim} \ar[rr]^{inc} &&{K(\A)} \ar@/^2pc/[ll]\ar@/_2pc/[ll] \ar[rr]^{Q}
&&{\dfrac{\Ch(\A)}{ex\A }}\ar@/^2pc/ [ll] \ar@/_2pc/ [ll] }
\]
\vskip0.7cm
 where for every subcategory $\C$ of $\G$, $ex{\C}$ denotes the class of acyclic unbounded complexes with terms in $\C$.

 The first recollement  generalizes the recollement obtained by Krause (~\cite{Kr05}) where the middle term is the homotopy category of the injective objects.

For a complete hereditary cotorsion pair $(\A, \B)$ the term $\dfrac{\Ch(\B)}{ex\B }$ is equivalent to the derived category $\D(\G)$ of the Grothendieck category, essentially because $\Ch(\B)$ contains the dg-injective complexes. Analogously, if $\G$ has enough projectives, $\dfrac{\Ch(\A)}{ex\A}$ is equivalent to the derived category $\D(\G)$ of the Grothendieck category, since $\Ch(\A)$ contains the dg-projective complexes.

By Neeman~\cite{Nee90} the derived category of an idempotent complete exact category $\C$ is defined as the quotient $\dfrac{\Ch(\C)}{\widetilde{\C}}$, where $\widetilde{\C}$ denotes the class of unbounded complexes acyclic in $\C$, meaning that the differentials factor through short exact sequences in $\C$.
Thus it would be important to get recollements analogous to the above ones, but with right term $\dfrac{\Ch(\B)}{\widetilde{\B}}$ or $\dfrac{\Ch(\A)}{\widetilde{\A}}$, that is the derived categories of the exact categories $\B$ or $\A$. Of course, if $\widetilde{\B}=ex\B$ or $\widetilde{\A}=ex{\A}$, the recollements above degenerate into
\vskip0.7cm
\[
\xymatrix{\dfrac{\widetilde{\B}}{\sim} \ar[rr]^{inc} &&{K(\B)} \ar@/^2pc/[ll]\ar@/_2pc/[ll] \ar[rr]^{Q}
&&{\dfrac{\Ch(\B)}{\widetilde{\B} }}\ar@/^2pc/ [ll] \ar@/_2pc/ [ll] }
,\]
\vskip0.7cm

\[
\xymatrix{\dfrac{\widetilde{\A}}{\sim} \ar[rr]^{inc} &&{K(\A)} \ar@/^2pc/[ll]\ar@/_2pc/[ll] \ar[rr]^{Q}
&&{\dfrac{\Ch(\A)}{\widetilde{\A} }}\ar@/^2pc/ [ll] \ar@/_2pc/ [ll] }
.\]
\vskip0.7cm

The only non degenerate example for the case $\widetilde{\B}\subsetneq ex\B$ of which we are aware, is given by the cotorsion pair $(\A, \FpInj)$ over a coherent ring where $\FpInj$ denotes the class of Fp-injective modules, that is the right Ext-orthogonal to the class of finitely presented modules. This follows by \sto's results in \cite{sto14} which we are able to slightly generalize in Proposition~\ref{P:B-inj-n}.

Symmetrically, it seems there are very few non degenerate examples of such recollements for the case $\widetilde{\A}\subsetneq ex\A$. The more important one follows by the celebrated Neeman's result in \cite{Nee08} and it is the case when $\A$ is class of flat modules. We show a slight generalization of this situation in Proposition~\ref{P:A-flat}.%

From the results in Section~\ref{S:2} and the results in a recent paper~\cite{BCIE} we obtain many examples of cotorsion pairs $(\A, \B)$ in module categories satisfying the condition $\widetilde{\B}=ex\B$. These include tilting and cotilting cotorsion pairs, the closure of the cotorsion pair generated by the compact objects of finite projective dimension and the cotorsion pair $(\F, \C)$ of the flat and cotorsion modules.

\section{Preliminaries}

\subsection{Cotorsion pairs}\label{S:cot-pair}

The notion of an \emph{exact category} was introduced by Quillen in \cite{Q}.
 An exact category is an additive category  $\C$ endowed with  a collection $\Phi$ of kernel-cokernel pairs  satisfying some axioms which allow to work with the sequences  in $\Phi$ as if they were exact sequences in an abelian category. An element $E\in \Phi$ is denoted by $0\to A\overset{i}\to B\overset{d}\to C\to 0$ and is called a \emph{conflation or short exact sequence}. The map $i$ is called \emph{inflation} and $p$ is called \emph{deflation}.
In an exact category pushouts (pullbacks) of inflations (deflations) exist and inflations (deflations) are stable under pushouts (pullbacks).

The axioms on conflations allow to define the Yoneda functor $\Ext^i_{\C}(M,N)$ for every pair of objects $M,N$
 in $\C$.
For more details see \cite{Kel90} or \cite{Bu}.

We will deal with weakly idempotent complete (WIC) additive categories, that is categories such that every section has a cokernel or, equivalently, every retraction has a kernel.

Given a class $\X$ of objects in an exact category $\C$, the right orthogonal class $\X^{\perp_1}$ consists of the objects $Y$ such that $\Ext^1_{\C}(X,Y)=0$ for each object $X\in \X$. Similarly, the left orthogonal class $^{\perp_1}\X $ consists of the objects objects $Y$ such that $\Ext^1_{\C}(Y,X)=0$ for each object $X\in \X$.
$\X^\perp$ will denote the class of objects $Y$ such that $\Ext^i_{\C}(X,Y)=0$ for each object $X\in \X$ and each $i\geq 1$. Similarly, for the orthogonal $^\perp\X $.
\begin{defn}\label{D:cotorsion-pair} A pair of classes $(\A, \B)$ in an exact category $\C$ is called a cotorsion pair if
\begin{enumerate}
\item $\A^{\perp_1}=\B$ and $^{\perp_1} \B=\A$.
\item A cotorsion pair is \emph{generated (cogenerated)} by a class $\X$ of objects if $\B=\X^{\perp_1} (\A={^{\perp_1}\X})$.
\item A cotorsion pair $(\A, \B)$ has \emph{enough projectives} if every object $C\in \C$ has a special $\A$-precover, that is there is a short exact sequence $0\to B\to A\to C\to 0$ in $\C$ with $A\in \A$ and $\B\in B$. Dually, we say that  $(\A, \B)$ has \emph{enough injective} if every object $C\in \C$ has a special $\B$-preenvelope, that is there is a short exact sequence $0\to C\to B\to A\to 0$ in $\C$ with $A\in \A$ and $\B\in B$.
\item A cotorsion pair is \emph{complete} when it has \emph{enough injectives} and \emph{enough projectives}.
\item A cotorsion pair is called \emph{hereditary} if $\A^\perp=\B$ and $^{\perp} \B=\A$.
\end{enumerate}
\end{defn}
A class $\C$ of objects in an exact category is \emph{deconstructible} and denoted by $\F ilt\ \clS$, if there is a set $\clS$ of objects such that every object of $\C$ is a transfinite extension of objects of $\clS$ (for more details see \cite[Definition 3.7 and 3.10]{Sto13}).

It is possible to prove, using the so called Small Object Argument, that any cotorsion pair $(\A,\B)$ generated by a set in a category of modules is complete (see \cite{Q} or \cite{ET01}). The argument can be actually extended to Grothendieck categories, provided that $\A$ is generating. We give a precise statement in the following lemma.

\begin{lem}\label{lem1_2} Let $(\A,\B)$ be a cotorsion pair in a Grothendieck category such that $\A$ is generating. Then:
  \begin{enumerate}
    \item $\A$ is generated by a set if and only if it is deconstructible.
    \item If the equivalent conditions in (1) hold, then $(\A,\B)$ is complete.
  \end{enumerate}
\end{lem}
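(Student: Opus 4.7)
The plan is to prove both parts simultaneously by exploiting the small object argument, which is the standard mechanism linking deconstructibility, generation by a set, and completeness of cotorsion pairs in Grothendieck categories (as in Eklof--Trlifaj for modules and in \Sto's extension to Grothendieck categories).

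For (1), the direction deconstructible $\Rightarrow$ generated by a set is the easy one. If $\A=\Filt\clS$ for some set $\clS$, then $\clS\subseteq\A$ gives $\A^{\perp_1}\subseteq \clS^{\perp_1}$, and Eklof's lemma (transfinite extensions preserve vanishing of $\Ext^1$ into a fixed object) gives the reverse inclusion. Hence $\B=\A^{\perp_1}=\clS^{\perp_1}$. For the converse, suppose $\B=\X^{\perp_1}$ for a set $\X$. Since $\A$ is generating, one may enlarge $\X$ to a set $\clS$ whose members generate $\G$ (e.g.\ add a generator $G$, which, by the hypothesis, admits a special $\A$-precover $0\to B\to A\to G\to 0$; replace $G$ by $A\in\A$). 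Fix a regular cardinal $\kappa$ so that every object of $\clS$ is $\kappa$-presentable, and for each $S\in\clS$ choose a short exact sequence $0\to S\to E_S\to F_S\to 0$. Given $M\in\A$, transfinitely iterating pushouts along the inflations $S\to E_S$, starting from $0\to M$, produces a short exact sequence $0\to M\to N\to L\to 0$ in which $N$ is filtered by $\clS\cup\{E_S\}$ and $L\in\X^{\perp_1}=\B$. Since $M\in{}^{\perp_1}\B$, the sequence splits, so $M$ is a direct summand of a transfinite extension of objects from an enlarged set $\clS'$. Invoking \Sto's result that in a Grothendieck category the closure of a set under direct summands of transfinite extensions is itself deconstructible then shows $\A=\Filt\clS''$ for a suitable set $\clS''$.

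For (2), once $\A$ is known to be deconstructible (equivalently, generated by a set), the construction above is already a special $\B$-preenvelope: applying it to an arbitrary $C\in\G$ in place of $M$ yields $0\to C\to N\to L\to 0$ with $N\in\A$ (by Eklof's lemma) and $L\in\B$. Thus $(\A,\B)$ has enough injectives. Enough projectives is then formal from Salce's lemma: given $C\in\G$, take a special $\B$-preenvelope of $C$, say $0\to C\to B'\to A'\to 0$, together with a short exact sequence $0\to C'\to A''\to A'\to 0$ obtained by applying the enough-injectives property to a short exact sequence that realizes $C$ as a subobject of an object in $\A$ (available since $\A$ is generating); forming the pullback yields the required special $\A$-precover with kernel in $\B$.

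The main obstacle is the non-trivial direction of (1): passing from the small-object-argument output, which only says that objects of $\A$ are summands of $\clS'$-filtered objects, to genuine deconstructibility of $\A$ itself. This step relies on the delicate fact that in a Grothendieck category a class closed under summands of transfinite extensions from a set is still deconstructible, together with bookkeeping of the presentability cardinals. Everything else is a careful but standard invocation of Eklof's lemma, the small object argument, and Salce's trick, all of which are available in the cited references.
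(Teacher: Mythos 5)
Your proposal is correct and follows essentially the same route as the paper: Eklof's lemma for the easy direction of (1), the small object argument together with \v{S}\v{t}ov\'{\i}\v{c}ek's result that the summand-closure of a filtered class is deconstructible for the converse, and the same machinery (plus Salce's trick) for completeness --- the paper simply cites \cite[Theorem~5.16]{Sto13} and \cite[Proposition~2.9(1)]{St10-deconstr} where you unpack the argument. Two cosmetic slips to fix: since $\A$ is generating you may take a generator $G\in\A$ directly (invoking a special $\A$-precover of $G$ before completeness is established would be circular), and Salce's passage from special $\B$-preenvelopes to special $\A$-precovers is via a pushout, not a pullback.
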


\begin{proof}
  (1) If $\A$ is deconstructible, call $\clS$ the set such that $\A = \F ilt\,\clS$. Then, $\F ilt\,\clS \subseteq {^\perp(\clS^\perp)}$ by Eklof's lemma, but ${^\perp(\clS^\perp)}\subseteq\A$ so they are actually equal, i.e. $(\A,\B)$ is generated by $\clS$.
  Conversely, if $(\A,\B)$ is generated by a set $\clS$, it is also generated by $\clS'=\clS\cup\{G\}$, where $G\in\A$ is a generator. Then, by \cite[Theorem~5.16]{Sto13} $\A$ consists of retracts of $\F ilt \clS$, and by \cite[Proposition~2.9(1)]{St10-deconstr} it is decontructible.

  (2) \cite[Theorem~5.16]{Sto13} actually gives a proof of this statement.
\end{proof}

We will mostly deal with  hereditary cotorsion pairs and
in order to characterize them we recall the following definition.
\begin{defn}\label{D:thick} Let $\C'$ be  a full subcategory of a WIC exact category $\C$.
\begin{enumerate}
\item $\C'$ is \emph{thick} if it is closed under direct summands and has the 2 out of 3 property.
\item $\C'$ is \emph{resolving} in $\C$ if $A\in \C'$ for every exact sequence $0\to A\to B\to C\to 0$ in $\C$ with $B, C\in \C'$.
\item $\C'$ is \emph{coresolving} in $\C$ if $C\in \C'$ for every exact sequence $0\to A\to B\to C\to 0$ in $\C$ with $A, B\in \C'$.
\end{enumerate}
\end{defn}

A complete cotorsion pair $(\A, \B)$ is hereditary \iff $\A$ is resolving or, equivalently, \iff $\B$ is coresolving  (see for instance \cite[Lemma 6.17]{Sto13}).

\subsection{Model structures}\label{S:model}
Model structures on bicomplete abelian categories were introduced by Quillen in \cite{QHtp}.
For the definition of a model structure we refer to the book by Hovey \cite{Hov99} or to the survey \cite{Sto13}.

We only recall that a \emph{model structure} on a category $\C$ consists
of three classes of morphisms Cof, W, Fib called \emph{cofibrations}, \emph{weak equivalences} and \emph{fibrations}, respectively, satisfying certain axioms. An object $X\in \C$ is \emph{cofibrant} if $0\to X$ is a cofibration,  \emph{fibrant} if $X\to 0$ is a fibration and it is \emph{trivial} if $0\to X$ is a weak equivalence.
In particular, the class $\W$ of trivial objects has the 2-out-of-3 property.
A \emph{model category} $\C$ is an abelian cocomplete category with a model structure.
 The \emph{homotopy category} $\Ho \C$ is obtained by  formally inverting all morphisms in W.

A tremendous impulse to the theory was given by  Hovey who discovered ~\cite{Hov07} a bijective correspondence between abelian model structures and cotorsion pairs in abelian categories.
In \cite{G5} Gillespie extended the notion of model structures on exact categories and proved the analogous of Hovey's correspondence in this more general setting.
 We recall the basic notions and results.
  \begin{defn}\label{D:exact-model} An \emph{exact model structure} on an exact category $\C$ is a model structure such that cofibrations (fibrations) are the inflations (deflations) with cofibrant (fibrant) cokernels (kernels).
\end{defn}
\begin{thm}\label{T:correspondence} (\cite{Hov07}, \cite{G5}) Let $\C$ be a WIC exact category with an exact model structure. Let $\clQ$ be the class of cofibrant objects, $\R$ the class of fibrant objects and $\W$ the class of trivial objects. Then $\W$ is a thick subcategory of $\C$, and $(\clQ, \R\cap \W)$ and $(\clQ\cap\W, \R)$ are complete cotorsion pairs in $\C$.
Moreover, given three classes $\W, \clQ, \R$ such that $\W$ is thick in $\C$, $(\clQ, \R\cap \W)$ and $(\clQ\cap\W, \R)$ are complete cotorsion pairs in $\C$, then there is an exact model structure on $\C$ where $\clQ$ are the cofibrant objects, $\R$ are the fibrant objects and $\W$ the trivial objects.
\end{thm}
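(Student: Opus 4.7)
The plan is to establish the two directions of the correspondence separately, with the crux in both cases being the translation between lifting properties in the model structure and $\Ext^1$-vanishing conditions that define a cotorsion pair.

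For the forward direction, assume an exact model structure is given. I would first show that $\W$ is a thick subcategory. The 2-out-of-3 property of weak equivalences, applied to the maps $0 \to A$, $0 \to B$, $0 \to C$ arising from a conflation $0 \to A \to B \to C \to 0$, implies the 2-out-of-3 property for $\W$ under conflations; closure of $\W$ under direct summands is then a consequence of the general fact that weak equivalences are stable under retracts. To produce the cotorsion pair $(\clQ, \R \cap \W)$, I would exploit the characterization of cofibrations as inflations with the left lifting property against trivial fibrations. For $Q \in \clQ$ and $X \in \R \cap \W$, any extension $0 \to X \to Y \to Q \to 0$ presents $Y \to Q$ as a deflation with kernel in $\R \cap \W$, hence as a trivial fibration, and lifting $\mathrm{id}_Q$ along $0 \to Q$ yields a splitting, so $\Ext^1_\C(Q, X) = 0$. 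The converse inclusion ${}^{\perp_1}(\R \cap \W) \subseteq \clQ$ follows by factoring $0 \to A$ as a cofibration composed with a trivial fibration and using that the splitting exhibits $A$ as a retract of a cofibrant object. Completeness of both cotorsion pairs is then an immediate consequence of the factorization axioms applied to $0 \to C$ and $C \to 0$, and the analogous argument gives the pair $(\clQ \cap \W, \R)$.

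For the reverse direction, starting from the triple $(\W, \clQ, \R)$, I would define the cofibrations (respectively trivial cofibrations) as the inflations with cokernel in $\clQ$ (respectively $\clQ \cap \W$), the fibrations (respectively trivial fibrations) as the deflations with kernel in $\R$ (respectively $\R \cap \W$), and the weak equivalences as the morphisms that factor as a trivial cofibration followed by a trivial fibration. The factorization axioms then follow directly from the completeness of the two given cotorsion pairs. The lifting axioms translate, via the standard equivalence between lifting problems against conflations and $\Ext^1$-vanishing, into the orthogonality relations defining the two cotorsion pairs. The retract axiom follows from the fact that each half of a cotorsion pair is closed under retracts.

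The main obstacle is verifying the 2-out-of-3 axiom for weak equivalences, together with showing that this class is well-defined independently of the chosen factorization. The strategy is to provide an alternative, intrinsic characterization of weak equivalences purely in terms of the triple $(\W, \clQ, \R)$: a morphism $f$ is a weak equivalence if and only if it factors as an inflation with cokernel in $\W$ composed with a deflation with kernel in $\W$. Combining this characterization with the thickness of $\W$ and the $\Ext^1$-orthogonality supplied by the two cotorsion pairs yields the 2-out-of-3 property via a pasting argument on conflations. This is the step where Gillespie's extension of Hovey's theorem to exact categories in \cite{G5} required the most care, since one must work entirely within the class of conflations rather than invoke arbitrary short exact sequences in an abelian envelope.
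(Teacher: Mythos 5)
This theorem is quoted in the paper from Hovey \cite{Hov07} and Gillespie \cite{G5} and is not proved there, so there is no in-paper argument to compare against; your outline is, in substance, the standard published proof. The reverse direction is laid out correctly: defining (trivial) cofibrations and fibrations via the two cotorsion pairs, obtaining the factorization axioms from completeness, the lifting axioms from the $\Ext^1$-orthogonality, and handling the genuinely delicate point --- well-definedness and 2-out-of-3 for the weak equivalences --- via the intrinsic characterization of a weak equivalence as an inflation with cokernel in $\W$ followed by a deflation with kernel in $\W$. That is exactly Hovey's Lemma 5.8 and the route Gillespie follows in the exact setting.

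There is, however, a genuine gap in your forward direction: the claim that thickness of $\W$ follows from the 2-out-of-3 property of weak equivalences ``applied to the maps $0\to A$, $0\to B$, $0\to C$'' does not work as stated. For a conflation $0\to A\to B\to C\to 0$ with $A,C\in\W$, the maps $0\to A$ and $0\to C$ are weak equivalences and $0\to C$ is the composite of $0\to B$ with the deflation $B\to C$; 2-out-of-3 gives nothing here unless you already know $B\to C$ is a weak equivalence, which you do not (its kernel $A$ is trivial but need not be fibrant, so it is not a trivial fibration). Similarly, in the case $A,B\in\W$ you can deduce that the inflation $A\to B$ is a weak equivalence, but passing from that to triviality of its cokernel again requires the factorization of $A\to B$ as a trivial cofibration followed by a trivial fibration together with pushout/pullback manipulations of conflations (this is the content of Hovey's Lemmas 5.4--5.6, and the part Gillespie had to redo carefully for exact categories). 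So the thickness of $\W$ needs essentially the same machinery you correctly reserve for the reverse direction; as written, that step would fail. The remaining forward-direction arguments (the $\Ext^1$-vanishing via lifting against trivial fibrations, the retract argument for ${}^{\perp_1}(\R\cap\W)\subseteq\clQ$, and completeness from the factorizations of $0\to C$ and $C\to 0$) are correct, though for a full proof you should also record the dual inclusion $\clQ^{\perp_1}\subseteq\R\cap\W$, obtained by factoring $C\to 0$.
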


We denote by $\Ch(\C)$ the category of cochain complexes $X$ with component $X^n\in \C$ in degree $n$ and with differential $d^n_X\colon X^n\to ^{n+1}$ for every $n\in \bbZ$. The morphisms in $\Ch(\C)$ are the usual cochain maps. The suspension is denoted by $[-]$.
 If $\C$ is an exact category, then $\Ch(C)$ is equipped with the exact structure where the short exact sequences are the sequences which are exact in each degree. We can also consider the exact structure on $\Ch(\C)$ where the short exact sequences are degreewise splitting. $\Ext_{dw}(X, Y)$ denotes the Yoneda group of these degreewise splitting sequences.

 For every object $C\in \C$, $S^n(C)$ denotes the complex with entries $0$ for every $i\neq n$ and with $C$ in degree $n$; $D^n(C)$ denotes the complex with $C$ in degrees $n$ and $n+1$ and $0$ elsewhere and with differential $d^n$ being the identity on $C$. The homotopy category $\K(\C)$ has the the same objects as $\Ch(\C)$ and the equivalence classes of cochain maps under the homotopy relation as morphisms.

Given two complexes $X$ and $Y$, the complex $\HOM(X, Y)$ is defined as the complex of abelian groups having $\prod\limits_{p\in \bbZ}\Hom_{\C}(X^p, Y^{n+p})$ in degree $n$  and with differential $d_H(f)=d_Y\circ f -(-1)^nf\circ d_X$. The $n^{\text th}$-cohomology of  $\HOM(X, Y)$ is given by $\Hom_{\K(C)} (X, Y[n])$.

We recall the useful and important formula
\[(\ast)\quad \Ext^1_{dw}(X, Y) \cong \Hom_{\K(\C)}(X,Y[1]).\] %
\begin{nota}\label{N:notation} (Following Gillespie's notations) Let $\A$ be a class of objects in an abelian category $\C$.
 Define the following classes of cochain
  complexes in $\Ch(\C)$:

  \begin{itemize}
    \item
      $dw\A$ is the class of all complexes $X\in\Ch(\C)$
      such that $X^n\in\A$ for all $n\in\bbZ$.  $\Ch(\A)$ will denote the full subcategory of  $\Ch(\C)$ with objects in $dw \A$.
       \item
      $ex\A$ is the class of all acyclic complexes in $dw\A$.
       \item
      $\tilde{\A}$ is the class class of all complexes $X$ in $
      ex\A$ with the cycles $Z^n(X)$ in $\A$ for all $n\in \bbZ$.
   $\Ch_{ac}(\A)$ will denote the full subcategory of  $\Ch(\C)$ with objects in $\tilde\A$.

    \item  If $(\A, \B)$ is a cotorsion pair in $\C$, then:

      $dg\A$ is the class of all complexes $X\in dw\A$
      such that  any morphism
      $f:X\to Y$ with $Y\in\tilde {\B}$ is null homotopic. Since $\Ext^1_{\C}(A^n, B^n)=0$ for every $n\in \bbZ$ formula $(\ast)$ shows that $dg\A={}^\perp{} \tilde {\B}$.

      Similarly, $dg\B$ is the class of all complexes $Y \in dw\B$
      such that  any morphism
      $f:X\to Y$ with $X\in\tilde{\A}$  is null homotopic. Hence $dg\B=\tilde{\A}{}^\perp{}$.
  \end{itemize}
\end{nota}
\begin{lem}\label{L:HOM} Let $\C$ be an abelian category and let $0\to X\to Y\to Z\to 0$ be a short exact sequence of complexes in $\Ch(\C)$ with the degreewise exact structure. For every $A\in \Ch(\C)$ the sequence:
\[0\to \HOM(A, X)\to  \HOM(A, Y)\to \HOM(A, Z)\]
is an exact sequence of complexes in $\Ch(\bbZ)$ and it is also right exact provided that $\Ext_{\C}(A^n, X^n)=0$ for all $n\in \bbZ$.
 Dually, for every $B\in \Ch(\C)$ the sequence:
\[0\to \HOM(Z, B)\to  \HOM(Y, B)\to \HOM(X, B)\]
is an exact sequence of complexes in $\Ch(\bbZ)$ and it is also right exact provided that $\Ext_{\C}(Z^n, B^n)=0$ for all $n\in \bbZ$.
\end{lem}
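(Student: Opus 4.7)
The plan is to verify both the exactness and the right exactness claims degreewise in $\Ch(\bbZ)$, exploiting the fact that by definition the $n$-th component of $\HOM(A,W)$ is the product $\prod_{p\in\bbZ}\Hom_{\C}(A^p, W^{n+p})$, and that the induced map $\HOM(A,W)\to \HOM(A,W')$ coming from a chain map $W\to W'$ is, in degree $n$, simply the product of the maps $\Hom_\C(A^p, W^{n+p})\to \Hom_\C(A^p, W'^{n+p})$. In particular, since $\HOM(A,-)$ is a functor from $\Ch(\C)$ to $\Ch(\bbZ)$, the resulting maps are chain maps, so it suffices to check the exactness statement degree by degree in $\bbZ$.

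First I would unpack the hypothesis: the short exact sequence $0\to X\to Y\to Z\to 0$ in $\Ch(\C)$ with the degreewise exact structure means that $0\to X^m\to Y^m\to Z^m\to 0$ is a short exact sequence in $\C$ for every $m\in\bbZ$. Fixing $n$ and $p$ and setting $m=n+p$, applying the left exact functor $\Hom_\C(A^p,-)$ yields a left exact sequence of abelian groups
\[
0\to \Hom_\C(A^p, X^{n+p})\to \Hom_\C(A^p, Y^{n+p})\to \Hom_\C(A^p, Z^{n+p}).
\]
Taking the product over $p\in\bbZ$ preserves left exactness (products are exact in $\Ab$), giving exactly the assertion in degree $n$.

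For the right exactness under the $\Ext^1$ vanishing hypothesis, the argument is identical except that each of the individual sequences above is now also right exact: $\Ext^1_\C(A^p,X^{n+p})=0$ implies that $\Hom_\C(A^p,Y^{n+p})\to \Hom_\C(A^p,Z^{n+p})$ is surjective, and since products preserve surjections in $\Ab$, the degree $n$ component of $\HOM(A,Y)\to\HOM(A,Z)$ is surjective, which is precisely what it means for the sequence of complexes to be right exact. The dual statement, involving $\HOM(-,B)$, is proved symmetrically: one applies the contravariant left exact functor $\Hom_\C(-, B^{n+p})$ to the short exact sequence $0\to X^{n+p}\to Y^{n+p}\to Z^{n+p}\to 0$ and again takes products over $p$, with the extra $\Ext^1$ hypothesis giving the needed surjectivity on the last term.

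There is no real obstacle here; the only thing requiring a moment of care is keeping the indexing straight between the degree $n$ of $\HOM$ and the product index $p$ (so that one is computing $\Ext^1$ between the correct degrees), and checking that what has been verified degreewise indeed assembles into the claimed exactness of a sequence of chain complexes, which is immediate from the functoriality of $\HOM$ in each argument.
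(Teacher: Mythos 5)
Your proof is correct and is exactly the argument the paper has in mind (the paper's own proof just says ``immediate from the definition of $\HOM$''): one applies $\Hom_\C(A^p,-)$ degreewise to the componentwise short exact sequence and uses that products are exact in $\Ab$, with the $\Ext^1$ vanishing supplying surjectivity on the right. The only caveat, which you implicitly handle by writing $\Ext^1_\C(A^p,X^{n+p})$, is that the hypothesis must be read as vanishing of $\Ext^1_\C(A^p,X^m)$ for all pairs of degrees $p,m$ (as it holds in every application in the paper, where $A^p\in\A$ and $X^m\in\B$), not merely on the diagonal $p=m$.
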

\begin{proof} Immediate from the definition of the complex $\HOM$.
\end{proof}
\section{Hereditary cotorsion pairs in Grothendieck categories}

We recall some results which will be used throughout. Their proof can be found in \cite{St10-deconstr}, \cite{Sto13},  \cite{G4}, \cite{G6}.

 \begin{prop}\label{P:description-tilde} (\cite[Proposition 7.13, 7.14]{Sto13} Let $(\A, \B)$ be a complete cotorsion pair in an abelian category $\C$. The following hold true
  \begin{enumerate}
  \item A complex $Y$ belongs to $\tilde\B$ \iff $\Ext^1_{\Ch(\C)}(S^n(A), Y)=0$ for every $n\in \bbZ$ and every $A\in \A$.
  \item A complex $X$ belongs to $\tilde\A$ \iff $\Ext^1_{\Ch(\C)}(X, S^n(B))=0$ for every $n\in \bbZ$ and every $B\in \B$.
  \item If $\C$ is a Grothendieck category and $(\A, \B)$ is a complete hereditary cotorsion pair, then $(\tilde\A, dg\B)$ and $(dg \A, \tilde \B)$ are complete hereditary cotorsion pairs in $\Ch(\C)$.

    \item $(dg\A, \E, dg\B)$ is a model structure on $\Ch(\C)$ with the acyclic complexes $\E$ as trivial objects. In particular, $dg\A\cap\E=\tilde \A$ and $dg\B\cap\E=\tilde \B$.
  \end{enumerate}
  \end{prop}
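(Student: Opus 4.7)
The plan is to prove the four statements in sequence: (1) and (2) are dual extension computations, (3) follows from (1), (2) and the Small Object Argument, and (4) instantiates Hovey's correspondence (Theorem~\ref{T:correspondence}).

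For (1), I would compute $\Ext^1_{\Ch(\C)}(S^n(A), Y)$ by a direct analysis of extensions. A short exact sequence $0\to Y\to E\to S^n(A)\to 0$ coincides with $Y$ outside degree $n$, and in degree $n$ is encoded by an extension $0\to Y^n\to E^n\to A\to 0$ in $\C$ together with a lift $d^n_E\colon E^n\to Y^{n+1}$ of $d^n_Y$ whose composition with $d^{n+1}_Y$ vanishes. If $Y\in\tilde\B$ then $Z^n Y$ and $Z^{n+1} Y$ lie in $\B$; since $\B$ is closed under extensions (being of the form $\A^{\perp_1}$), the term $Y^n$ also lies in $\B$, and the obstructions in $\Ext^1_{\C}(A,Y^n)$ and $\Ext^1_{\C}(A,Z^{n+1}Y)$ both vanish. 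Conversely, starting from the Ext-vanishing hypothesis, one first recovers acyclicity of $Y$ (any nonzero cohomology class would produce a nontrivial extension against a suitable test complex) and then obtains $Z^n Y\in\A^{\perp_1}=\B$ by realising any $\xi\in\Ext^1_{\C}(A,Z^n Y)$ as a chain-complex extension via pushout of $\xi$ along $Z^n Y\hookrightarrow Y^n$, with the zero map serving as the lifted differential. Part (2) is proved dually.

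For (3), combining (1) with the identification $dg\A={}^{\perp_1}\tilde\B$ noted after Notation~\ref{N:notation} already shows that $(dg\A,\tilde\B)$ is a cotorsion pair, since $\tilde\B=\{S^n(A):n\in\bbZ,\,A\in\A\}^{\perp_1}$; the pair $(\tilde\A,dg\B)$ is symmetric. For completeness, Lemma~\ref{lem1_2} furnishes a generating set $\clS$ for $\A$ (containing a generator $G$ of $\C$); the set $\{S^n(A):n\in\bbZ,\,A\in\clS\}$ then generates $(dg\A,\tilde\B)$, and $dg\A$ contains the disks $D^n(G)$, which are generating in $\Ch(\C)$. Applying Lemma~\ref{lem1_2}(2) in the Grothendieck category $\Ch(\C)$ produces enough injectives and projectives for both pairs. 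Heredity of $(dg\A,\tilde\B)$ reduces to $\tilde\B$ being coresolving, which follows from the 2-out-of-3 property of acyclic complexes, the short exact sequence on cycles induced by an exact sequence of acyclic complexes, and the fact that $\B$ is coresolving in $\C$ by heredity of $(\A,\B)$.

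For (4), I invoke Theorem~\ref{T:correspondence} with $\clQ=dg\A$, $\R=dg\B$, $\W=\E$. Thickness of $\E$ in $\Ch(\C)$ is standard. The inclusions $\tilde\A\subseteq dg\A\cap\E$ and $\tilde\B\subseteq dg\B\cap\E$ are obtained from $\Ext^1_{\Ch(\C)}(\tilde\A,\tilde\B)=0$, a consequence of (3) and the degreewise vanishing of $\Ext^1_{\C}$ between $\A$ and $\B$ combined with formula (*), together with $\tilde\A,\tilde\B\subseteq\E$. For the reverse, given $X\in dg\A\cap\E$, a special $\tilde\A$-precover $0\to K\to L\to X\to 0$ from the complete cotorsion pair $(\tilde\A,dg\B)$ forces $K\in dg\B\cap\E$ by 2-out-of-3; if we simultaneously know $dg\B\cap\E\subseteq\tilde\B$, then $K\in\tilde\B$, whence $\Ext^1_{\Ch(\C)}(X,K)=0$ splits the sequence and $X$ is a retract of $L\in\tilde\A$. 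The main technical obstacle is exactly this joint verification of the two identifications $dg\A\cap\E=\tilde\A$ and $dg\B\cap\E=\tilde\B$, which must be carried out in parallel by iterating the two special approximations from $(dg\A,\tilde\B)$ and $(\tilde\A,dg\B)$ and exploiting heredity; with these in hand, all the hypotheses of Theorem~\ref{T:correspondence} are verified, yielding the desired model structure.
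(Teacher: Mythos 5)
Your sketches of (1) and (2) are the standard computation and are essentially sound, up to two small points: in the splitting argument for (1) the second obstruction (to correcting a section $s_0$ of $E^n\to A$ so that $d^n_E$ annihilates it) lies in $\Ext^1_{\C}(A,Z^n Y)$ --- the kernel of the epimorphism $Y^n\to Z^{n+1}Y$ along which $-d^n_E s_0$ must be lifted --- not in $\Ext^1_{\C}(A,Z^{n+1}Y)$; and the acyclicity step in the converse should be made explicit by applying the hypothesis to a special $\A$-precover of $Z^{n+1}(Y)$, which is an epimorphism by completeness. Note that the paper proves none of this itself: (1)--(3) are quoted from \cite{Sto13} and (4) is deduced formally, so you are attempting a genuinely self-contained argument.

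Two steps, however, have real gaps. First, in (3) you obtain completeness of $(dg\A,\tilde\B)$ and $(\tilde\A,dg\B)$ from Lemma~\ref{lem1_2}, which requires these pairs to be generated by sets; that in turn requires $(\A,\B)$ to be generated by a set (equivalently, $\A$ deconstructible), which is not among the hypotheses and is not known to follow from mere completeness in a Grothendieck category. (You also do not exhibit a set generating $(\tilde\A,dg\B)$.) The cited \cite[Proposition~7.14]{Sto13} constructs the special approximations in $\Ch(\C)$ directly from those of $(\A,\B)$, precisely to avoid this assumption; as written, your argument proves (3) only in the set-generated case. Second, and more seriously, in (4) the identifications $dg\A\cap\E=\tilde\A$ and $dg\B\cap\E=\tilde\B$ are exactly what must be established before Theorem~\ref{T:correspondence} can be invoked with $\clQ=dg\A$, $\R=dg\B$, $\W=\E$, and your argument for them is circular by your own account; ``iterating the two approximations in parallel'' does not resolve this, since each pass reproduces the same unproved inclusion. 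A non-circular proof goes through part (2): for $X\in dg\A\cap\E$ and $B\in\B$ one shows that every map $Z^m(X)\to B$ extends to $X^m$ (equivalently, $\Ext^1_{\Ch(\C)}(X,S^m(B))=0$) by splicing $B$ with an injective coresolution to realize it as a cycle of a complex $Y\in\tilde\B$, extending the given map to a chain map $X\to Y$, and using that this chain map is null-homotopic because $X\in dg\A$; heredity of $(\A,\B)$ is what places the cosyzygies in $\B$. This is the content of \cite{Sto13} that the paper is implicitly relying on, and it is missing from your proposal.
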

  \begin{proof}
 (1) and (2) are proved in \cite[Lemma 7.13]{Sto13}. (3) is proved in \cite[Proposition 7.14]{Sto13}.
  (4) follows by (3) and by Hovey's correspondence (see Theorem~\ref{T:correspondence}).
  \end{proof}
   \begin{prop}\label{P:complete?} Let $(\A, \B)$ be a complete cotorsion pair in an Grothendieck category $\G$.
 \begin{enumerate}
  \item $(dw\A, dw\A^\perp)$ and $(^\perp dw\B, dw\B)$ are cotorsion pairs in $\Ch(\G)$.
  \item If $(\A, \B)$ is generated by a set, then so is $(^\perp dw\B, dw\B)$.
  \item If $(\A, \B)$ is generated by a set, then so is $(dw\A, dw\A^\perp)$.

  \item If $\A$ contains a generator of $\G$ with finite projective dimension,
  then $(^\perp ex\B, ex\B)$ is a cotorsion pair in $\Ch(\G)$. If moreover, $(\A, \B)$ is generated by a set, then so is $(^\perp ex\B, ex\B)$.
  \item $(ex\A, ex\A^\perp)$ is a cotorsion pair. Moreover, if $(\A, \B)$ is generated by a set, then so is $(ex\A, ex\A^\perp)$.

   \end{enumerate}
  \end{prop}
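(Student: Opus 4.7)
The proof of all five parts is based on the well-known adjunction formulas computing $\Ext^1$ in $\Ch(\G)$ against disk and stalk complexes in terms of $\Ext^1$ and cohomology in $\G$. Explicitly, for every object $M \in \G$ and every complex $Y$ one has
\[
\Ext^1_{\Ch(\G)}(D^n(M), Y) \cong \Ext^1_{\G}(M, Y^n), \qquad \Ext^1_{\Ch(\G)}(X, \overline{D}^{n+1}(M)) \cong \Ext^1_{\G}(X^n, M),
\]
obtained from the adjunctions $D^n \dashv (-)^n \dashv \overline{D}^{n+1}$. Combined with the short exact sequence $0 \to S^{n+1}(M) \to D^n(M) \to S^n(M) \to 0$ and a Yoneda computation, one derives a short exact sequence
\[
0 \to \Ext^1_{\G}(M, Z^{n+1}(Y)) \to \Ext^1_{\Ch(\G)}(S^n(M), Y) \to \Hom_{\G}(M, H^{n+1}(Y)) \to 0
\]
computing $\Ext^1$ against stalks. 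Together with Eklof's lemma and Lemma~\ref{lem1_2}, these are the main tools.

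For (1), the non-trivial inclusion $^{\perp_1}(dw\A^{\perp_1}) \subseteq dw\A$: if $X$ lies in the left-hand class, the disk formula shows $\overline{D}^{n+1}(B) \in dw\A^{\perp_1}$ for every $B \in \B$, whence $\Ext^1_{\G}(X^n, B) = 0$ and $X^n \in \A$. The cotorsion pair $({}^\perp dw\B, dw\B)$ is treated dually. Part (2) follows at once by taking $\mathcal{T} = \{D^n(S) : S \in \clS,\, n \in \bbZ\}$, for which the $D^n$-formula yields $\mathcal{T}^{\perp_1} = dw\B$. For (3), the plan is to show that, enlarging $\clS$ to contain a generator $G$ of $\G$, the class $dw\A$ coincides with $\operatorname{Filt}\{S^n(S), D^n(S) : S \in \clS \cup \{G\},\, n \in \bbZ\}$; this is done by constructing a transfinite filtration of $X \in dw\A$ that refines the componentwise filtrations of each $X^n$ by $\clS$. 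Lemma~\ref{lem1_2} then yields generation by a set.

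For (4), disks handle the $dw\B$ condition as in (1). To obtain acyclicity I test against the stalks $S^n(G)$, with $G$ the generator of finite projective dimension $d$. For $Z \in ex\B$, the $\Hom$-term in the stalk formula vanishes by acyclicity of $Z$; the $\Ext^1$-term vanishes by iterating dimension shifting along the exact sequences $0 \to Z^k(Z) \to Z^k \to Z^{k+1}(Z) \to 0$, since hereditarity gives $\Ext^i_{\G}(G, Z^k) = 0$ for all $i \geq 1$, and the iteration terminates because $\Ext^{d+1}_{\G}(G, -) = 0$. Hence $S^n(G) \in {}^{\perp_1}(ex\B)$. Conversely, for any $Y \in dw\B$ with $\Ext^1_{\Ch(\G)}(S^n(G), Y) = 0$, the stalk formula yields $\Hom_{\G}(G, H^{n+1}(Y)) = 0$, and $H^{n+1}(Y) = 0$ because $G$ is a generator, proving $Y \in ex\B$. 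The generating set is then $\{D^n(S) : S \in \clS,\, n \in \bbZ\} \cup \{S^n(G) : n \in \bbZ\}$.

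For (5), $ex\A$ is closed under extensions and summands. Assuming $(\A,\B)$ is generated by a set and enlarging $\clS$ to contain a generator, I will construct a set $\mathcal{U} \subseteq ex\A$ of $\kappa$-presentable acyclic complexes with terms in $\clS$ and show $ex\A = \operatorname{Filt}\mathcal{U}$ by a L\"owenheim--Skolem-style argument: inside any $X \in ex\A$ one can find a $\kappa$-small acyclic subcomplex with terms in $\A$ by iteratively enlarging to capture preimages witnessing acyclicity, using the deconstructibility of $\A$ from $\clS$. Eklof's lemma and Lemma~\ref{lem1_2} then close the proof. The main obstacle is the dimension shift in part (4): it relies essentially on both hereditarity and the finite projective dimension of the generator, for without these hypotheses $\Ext^1_{\G}(G, Z^{n+1}(Y))$ need not vanish and the argument breaks down, explaining why this is exactly the assumption imposed.
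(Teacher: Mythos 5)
Your treatment of (1), (2) and the converse direction of (4) is sound and is essentially the standard disk/stalk argument underlying the results of Gillespie that the paper cites. However, part (3) contains a genuine error: the class $dw\A$ is \emph{not} equal to $\operatorname{Filt}\{S^n(S), D^n(S) : S\in\clS\cup\{G\},\, n\in\bbZ\}$ in general, and no refinement of the componentwise filtrations can make it so. To see this, take $\G=\Modr R$ and the complete cotorsion pair $(\Modr R,\Inj)$ generated by $\clS=\{R\}$, so that $dw\A=\Ch(R)$. Your own stalk formula gives $\Ext^1_{\Ch(R)}(S^n(R),Y)\cong H^{n+1}(Y)$ and $\Ext^1_{\Ch(R)}(D^n(R),Y)=0$, so the right $\Ext^1$-orthogonal of your proposed filtration set is exactly the class $\E$ of acyclic complexes. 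If $\Ch(R)$ were filtered by that set, Eklof's lemma would force $\Ext^1_{\Ch(R)}(X,E)=0$ for \emph{every} complex $X$ and every acyclic $E$, i.e.\ every acyclic complex would be an injective object of $\Ch(R)$ --- false already for $E=D^0(M)$ with $M$ non-injective. The point is that extensions by stalks and disks are far too rigid to build an arbitrary unbounded complex; this is precisely why the paper instead invokes \v{S}tov\'{\i}\v{c}ek's theorem that $dw\A$ is deconstructible whenever $\A$ is, whose filtration set consists of suitably small \emph{complexes} with components filtered by $\clS$, and then applies Lemma~\ref{lem1_2}.

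Two smaller points. In (5) you never establish the first assertion, that $(ex\A, ex\A^\perp)$ is a cotorsion pair: closure of $ex\A$ under extensions and summands does not give ${}^{\perp_1}(ex\A^{\perp_1})\subseteq ex\A$; one needs to test against the stalks $S^n(I)$ of injectives (whose $\Ext^1$ against an acyclic complex vanishes, and which detect non-acyclicity since injectives cogenerate) in addition to the disks. Your deconstructibility plan for $ex\A$ is the right idea but is only a plan; the paper short-circuits it by writing $ex\A=dw\A\cap\E$ and using that intersections of deconstructible classes are deconstructible. Finally, in (4) your dimension-shifting step genuinely uses $\Ext^i_\G(G,Y^k)=0$ for all $i\geq 1$, i.e.\ hereditariness (or projectivity of $G$), which is not among the stated hypotheses of the proposition; you flag this yourself, and it is a point on which the statement as printed is also delicate, but it should be made explicit rather than absorbed into the word ``hereditarity''.
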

 \begin{proof}
  (1) is proved in \cite[Proposition 3.2]{G4},

  (2) is proved in \cite[Proposition 4.4]{G4}.

  (3) is proved as follows: by Lemma~\ref{lem1_2}, $\A$ is deconstructible and by \cite[Theorem 4.2]{St10-deconstr} so is $dw\A$. Moreover, $dw\A$ contains a generator, so $(dw\A,dw\A^\perp)$ is generated by a set by Lemma~\ref{lem1_2}.

  The first part of (4) is proved in \cite[Proposition 3.3]{G4}; the second part in \cite[Proposition 4.6]{G4}.

  The first part of (5) is again proved in \cite[Proposition 3.3]{G4}; for the second part we argue as in the proof of \cite[Proposition 7.3]{G6}. $ex\A=dw\A\cap \E$, where $\E$ is the class of acyclic complexes. By \cite[Theorem 4.2]{St10-deconstr} $\E$ and $dw \A$ are deconstructible, hence $ex\A$ is deconstructible by \cite[Proposition 2.9]{St10-deconstr}.
  Moreover, $ex\A$ contains a generator, so $(ex\A,ex\A^\perp)$ is generated by a set by Lemma~\ref{lem1_2}.
  \end{proof}

\begin{rem} If $(\A, \B)$ is a complete hereditary cotorsion pair in an abelian category, then the complete cotorsion pairs defined in the above proposition are hereditary, too.
\end{rem}

\begin{lem}\label{L:ex-tilde} Let $(\A, \B)$ be a complete hereditary cotorsion pairs in a Grothendieck category $\G$. Then $ex\B=\tilde \B$ if and only if $dw\B=dg\B$.
Dually $ex\A= \tilde \A$ if and only if $dw\A=dg\A$
 \end{lem}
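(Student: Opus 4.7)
The plan is to prove the two equivalences separately, focusing on the claim for $\B$; the claim for $\A$ will follow by the symmetric argument using the cotorsion pair $(dg\A, \tilde\B)$. The direction $dw\B = dg\B \Rightarrow ex\B = \tilde\B$ is immediate from $ex\B = dw\B \cap \E$ (by definition) and $\tilde\B = dg\B \cap \E$ (by Proposition~\ref{P:description-tilde}(4)), where $\E$ denotes the class of acyclic complexes. For the converse, I will fix $Y \in dw\B$ and use the complete hereditary cotorsion pair $(\tilde\A, dg\B)$ from Proposition~\ref{P:description-tilde}(3) to produce a short exact sequence $0 \to Y \to I \to X \to 0$ in $\Ch(\G)$ with $I \in dg\B$ and $X \in \tilde\A$. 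The strategy is to use $ex\B = \tilde\B$ to force $X$ to be contractible, so that the sequence splits and $Y$ appears as a direct summand of $I \in dg\B$.

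The crucial computations proceed in two steps. First, since $X^n \in \A$ and $Y^n \in \B$, the vanishing $\Ext^1_\G(X^n, Y^n) = 0$ makes the sequence split degreewise, giving $I^n \cong Y^n \oplus X^n$; closure of $\B$ under direct summands then forces $X^n \in \A \cap \B$, so $X$ is an acyclic complex with entries in $\B$, i.e.\ $X \in ex\B$. Second, invoking the hypothesis $X \in ex\B = \tilde\B$ yields $Z^n X \in \A \cap \B$ for every $n$, and each short exact sequence $0 \to Z^n X \to X^n \to Z^{n+1} X \to 0$ then splits by the vanishing of $\Ext^1_\G(\A, \B)$. This identifies $X$ with $\bigoplus_n D^{n-1}(Z^n X)$, a direct sum of disks, hence $X$ is contractible.

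To conclude, contractibility of $X$ gives $\Hom_{\K(\G)}(X, Y[1]) = 0$, so by formula $(\ast)$ also $\Ext^1_{dw}(X, Y) = 0$; the original sequence is degreewise split and therefore represents an element of this group, so it splits in $\Ch(\G)$. Then $Y$ is a direct summand of $I \in dg\B$, and $dg\B$ is closed under summands as the right class of a cotorsion pair, so $Y \in dg\B$. I expect the conceptual crux — rather than a calculational obstacle — to lie in the observation that feeding the cokernel $X$ of the $dg\B$-preenvelope into the hypothesis turns $X$ into a contractible complex: this is the punch line that converts ``$Y \in dw\B$'' into ``$Y$ is a summand of an object in $dg\B$'', and everything else in the argument is formal.
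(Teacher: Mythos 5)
Your proof is correct and follows essentially the same route as the paper: both take the special $dg\B$-preenvelope $0\to Y\to I\to X\to 0$ of $Y\in dw\B$ furnished by the complete cotorsion pair $(\tilde\A, dg\B)$, and both use the hypothesis $ex\B=\tilde\B$ to place the cokernel in $\tilde\A\cap\tilde\B$ and conclude that it is contractible. The only (harmless) divergence is in the final step: the paper deduces $\Ext^1_{\Ch(\G)}(X',Y)=0$ for every $X'\in\tilde\A$ via the exactness of the $\HOM$-complexes (Lemma~\ref{L:HOM}), whereas you split the specific preenvelope sequence using $\Ext^1_{dw}(X,Y)\cong\Hom_{\K(\G)}(X,Y[1])=0$ and then invoke closure of $dg\B$ under direct summands.
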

  \begin{proof} Assume that $ex\B= \tilde \B$ and let $Y\in dw\B$. We have to show that $\Ext^1_{\Ch(\C)}(X, Y)=0$ for every $X\in \tilde \A$.  Equivalently we have to show that the complex $\HOM(X, Y)$ is exact  for every $X\in \tilde \A$. Since, $(\tilde\A, dg\B)$ is a complete cotorsion pair in $\Ch(\C)$  there is a short exact sequence
\[ 0\to Y\to Z\to V\to 0\]
with $Z\in dg\B$ and $V\in \tilde\A$. Now, $\B$ is coresolving,  hence $V\in \tilde\A\cap dw\B=\tilde\A\cap ex\B$ and the last is $\tilde\A\cap \tilde\B$ by  assumption. Thus, $V$ is contractible, hence null homotopic.
By Lemma~\ref{L:HOM} we have a short exact sequence
\[0\to \HOM(X, Y)\to  \HOM(X, Z)\to \HOM(X, V)\to 0\]
for every $X\in \tilde \A$. The second and the third nonzero terms are exact, hence also $ \HOM(X, Y)$ is exact.

Conversely, assume that $dw\B=dg\B$ and let $Y\in ex\B$. Then $Y\in dw\B\cap\E=dg\B\cap \E$ and by Proposition~\ref{P:description-tilde}~(4), $Y\in \tilde\B$.

The dual statement is proved in similar ways.
\end{proof}

\section{Cotorsion pairs $(\A, \B)$ satisfying $ex\B=\tilde{\B}$}\label{S:2}
We are interested in describing cotorsion pairs $(\A, \B)$ such that $ex\B=\tilde \B$ or $ex\A=\tilde \A$, since in these cases we have the following important consequences on the corresponding model structures.
\begin{cor}\label{C:cofibrant-fibrant}  Let $(\A, \B)$ be a complete hereditary cotorsion pairs in a Grothendieck category $\G$. The following hold true:
\begin{enumerate}
\item  If $ex\B=\tilde \B$, then $(dg\A, \E, dw\B)$ is a model structure in $\Ch(\G)$ for which the fibrant objects are exactly the complexes with components in $\B$.
\item If $ex\A=\tilde \A$, then $(dw\A, \E, dg\B)$ is a model structure in $\Ch(\G)$ for which the cofibrant objects are exactly the complexes with components in $\A$.
\end{enumerate}
\end{cor}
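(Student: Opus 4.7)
The plan is to apply Hovey's correspondence (Theorem~\ref{T:correspondence}) to the abelian (hence WIC) category $\Ch(\G)$ equipped with its degreewise exact structure, and then simply check which class of complexes winds up in each role. For part~(1), I would take as candidates for cofibrant, trivial, and fibrant objects the classes $\clQ=dg\A$, $\W=\E$, and $\R=dw\B$ respectively. Hovey's theorem then requires three things: that $\W$ is thick in $\Ch(\G)$, and that $(\clQ,\R\cap\W)$ and $(\clQ\cap\W,\R)$ are complete cotorsion pairs. Thickness of $\E$ is standard, following from the long exact cohomology sequence attached to a degreewise short exact sequence of complexes together with closure of $\E$ under direct summands.

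The remaining task is to recognise the two cotorsion pairs as pairs already available in the toolkit. Using the hypothesis $ex\B=\tilde\B$, I rewrite
\[dw\B\cap\E \;=\; ex\B \;=\; \tilde\B,\]
so the first pair becomes $(dg\A,\tilde\B)$, which is precisely one of the complete hereditary cotorsion pairs provided by Proposition~\ref{P:description-tilde}(3). By Lemma~\ref{L:ex-tilde}, the hypothesis is equivalent to $dw\B=dg\B$, so the second pair is $(dg\A\cap\E,dw\B)=(\tilde\A,dg\B)$, using Proposition~\ref{P:description-tilde}(4) to identify $dg\A\cap\E=\tilde\A$; this is again a complete hereditary cotorsion pair by Proposition~\ref{P:description-tilde}(3). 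Hovey's correspondence then produces the desired exact model structure $(dg\A,\E,dw\B)$. The fibrant objects are by definition those $X$ for which $X\to 0$ is a fibration, i.e.\ those $X\in\R=dw\B$, which is exactly the class of complexes with components in $\B$.

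Part~(2) is entirely dual. The hypothesis $ex\A=\tilde\A$ is equivalent, by Lemma~\ref{L:ex-tilde}, to $dw\A=dg\A$. Taking $\clQ=dw\A$, $\W=\E$, $\R=dg\B$, the two pairs to check reduce to
\[(dw\A,\,dg\B\cap\E)=(dg\A,\tilde\B), \qquad (dw\A\cap\E,\,dg\B)=(\tilde\A,dg\B),\]
both of which are complete hereditary cotorsion pairs by Proposition~\ref{P:description-tilde}(3)--(4). Hovey's theorem yields the model structure $(dw\A,\E,dg\B)$, and its cofibrant objects are the elements of $\clQ=dw\A$, i.e.\ the complexes with components in $\A$.

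I do not expect a serious obstacle: the argument is a bookkeeping exercise in which Lemma~\ref{L:ex-tilde} is used once to convert the standing hypothesis into the ``$dw=dg$'' form needed to match the cotorsion pairs of Proposition~\ref{P:description-tilde}(3). The only point requiring a little care is checking that Hovey's theorem applies to the non-abelian exact categories that could in principle appear, but here the ambient category $\Ch(\G)$ with its degreewise exact structure is abelian, so there is nothing to verify beyond thickness of $\E$ and completeness and heredity of the two displayed pairs.
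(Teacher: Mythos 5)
Your proof is correct and follows essentially the same route as the paper: the paper simply invokes Proposition~\ref{P:description-tilde}(4), which already provides the model structure $(dg\A,\E,dg\B)$, and Lemma~\ref{L:ex-tilde} to identify $dg\B=dw\B$ (resp.\ $dg\A=dw\A$), whereas you re-run the Hovey verification explicitly and match the two cotorsion pairs against Proposition~\ref{P:description-tilde}(3). The extra bookkeeping is harmless and the conclusion is the same.
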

\begin{proof} Follows by Proposition~\ref{P:description-tilde}~(4) and by Lemma~\ref{L:ex-tilde}.
\end{proof}

We say that an object $M$ in a Grothendieck category $\G$ has projective dimension at most $n$ if $\Ext_{\G}^i(M, -)$ vanishes for every $i>n$ and we denote by $\clP_n$ the class of objects of projective dimension at most $n$.
 Analogously, $M$ has injective dimension at most $n$ if $\Ext_{\G}^i(-, M)$ vanishes for every $i>n$ and we denote by $\I_n$ the class of objects of  injective dimension at most $n$. We denote by $\clP=\bigcup_n\clP_n$ the class of objects with finite projective dimension and by and $\I=\bigcup_n\I_n$ the class of objects with finite injective dimension.

  \begin{prop}\label{P:finite-proj-dim} Let $(\A, \B)$ be a complete hereditary cotorsion pair in a Grothendieck category $\G$ and let $Y$ be an acyclic complex with terms in $\B$. The following hold true:
  \begin{enumerate}
  \item If $M$ is an object in $\A$ with finite projective dimension, then the cycles $Z^j(Y)$ of $Y$ belong to $M{}^\perp{}$.
  \item If $\A\subseteq \clP$, then $Y\in \tilde \B$, hence $ex\B=\tilde \B$.

  In particular, in the abelian model structure corresponding to the cotorsion pair $(\A, \B)$ by Theorem~\ref{T:correspondence}, $dw \B$ is the class of fibrant objects.
  \end{enumerate}

   Dually, let $X$ be an acyclic complex with terms in $\A$. Then:
   \begin{enumerate}
  \item[(3)] If $N$ is an object in $\B$ with finite injective dimension, then the cycles $Z^j(X)$ of $X$ belong to ${^\perp N}$.
  \item[(4)] If $\B\subseteq \I$, then $X\in \tilde \A$, hence $ex\A=\tilde \A$.

    In particular, in the abelian model structure corresponding to the cotorsion pair $(\A, \B)$ by Theorem~\ref{T:correspondence}, $dw \A$ is the class of cofibrant objects.

  \end{enumerate}

  \end{prop}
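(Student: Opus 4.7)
My plan for statement (1) is to proceed by dimension shifting along the cycles of $Y$. Since $Y$ is acyclic with terms in $\B$, for every $j\in\bbZ$ there is a short exact sequence
\[
0\to Z^{j-1}(Y)\to Y^{j-1}\to Z^j(Y)\to 0.
\]
Applying $\Hom_{\G}(M,-)$ and using that the cotorsion pair is hereditary, hence $\Ext^i_{\G}(M,Y^{j-1})=0$ for all $i\geq 1$ (because $M\in\A$ and $Y^{j-1}\in\B$), the resulting long exact sequence collapses to isomorphisms
\[
\Ext^i_{\G}(M,Z^j(Y))\cong \Ext^{i+1}_{\G}(M,Z^{j-1}(Y)) \qquad (i\geq 1).
\]
Iterating gives $\Ext^i_{\G}(M,Z^j(Y))\cong \Ext^{i+k}_{\G}(M,Z^{j-k}(Y))$ for every $k\geq 0$. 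If $M$ has projective dimension at most $d$, then choosing $k$ with $i+k>d$ makes the right-hand side vanish, so $Z^j(Y)\in M{}^\perp{}$.

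For statement (2), if $\A\subseteq\clP$ then every $M\in\A$ has finite projective dimension, so by (1) we have $\Ext^1_{\G}(M,Z^j(Y))=0$ for all $M\in\A$ and all $j\in\bbZ$. Hence $Z^j(Y)\in\A^{\perp_1}=\B$, which means exactly that $Y\in\tilde\B$. This proves $ex\B\subseteq\tilde\B$, and the reverse inclusion is immediate from the definitions. The assertion about the fibrant objects in the associated abelian model structure is then a direct invocation of Corollary~\ref{C:cofibrant-fibrant}(1), which (via Lemma~\ref{L:ex-tilde}) gives $dw\B=dg\B$ and hence that $dw\B$ is the class of fibrant objects.

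The dual statements (3) and (4) are established by the symmetric argument. Starting from the short exact sequences $0\to Z^{j-1}(X)\to X^{j-1}\to Z^j(X)\to 0$ with $X^{j-1}\in\A$, and using heredity to get $\Ext^i_{\G}(X^{j-1},N)=0$ for all $i\geq 1$ and $N\in\B$, one obtains $\Ext^i_{\G}(Z^j(X),N)\cong \Ext^{i+k}_{\G}(Z^{j+k}(X),N)$, which vanishes for $k$ large enough that $i+k$ exceeds the injective dimension of $N$. Hence $Z^j(X)\in{}^\perp N$, and when $\B\subseteq\I$ this forces $Z^j(X)\in{}^{\perp_1}\B=\A$, so $X\in\tilde\A$ and $ex\A=\tilde\A$; the cofibrant-object assertion then follows from Corollary~\ref{C:cofibrant-fibrant}(2).

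I do not foresee any serious obstacle; the argument is a fairly standard dimension-shift once the two correct ingredients are in place, namely the heredity hypothesis (which makes \emph{all} higher Ext groups between $\A$ and $\B$ vanish, not just $\Ext^1$) and the choice of the shift direction that trades each step of projective (respectively injective) dimension of the external object for one shift along the cycles. The only mildly delicate point is checking that both $\Ext^i_{\G}(M,Y^{j-1})$ and $\Ext^{i+1}_{\G}(M,Y^{j-1})$ vanish simultaneously in the long exact sequence, which is precisely what heredity guarantees.
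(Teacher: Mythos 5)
Your proof is correct and follows essentially the same route as the paper: a dimension-shifting argument along the cycle short exact sequences, using heredity to kill $\Ext^{i}_{\G}(M,Y^{j})$ for all $i\geq 1$ and then the finiteness of the projective (resp.\ injective) dimension to make the shifted Ext groups vanish, with the model-structure assertion delegated to Corollary~\ref{C:cofibrant-fibrant}. Your write-up merely makes explicit the isomorphisms that the paper summarizes as ``a dimension shifting argument.''
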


  \begin{proof} (1) Clearly it is enough to verify that the $0$-cycle $Z^0$ of $Y$ is in $M{}^\perp{}$.  Consider the exact complex
  \[\dots Y^{-n}\to\dots  \to Y^{-2}\to Y^{-1}\to Z^0\to 0,\]
 If $M$ is in $ \A$, then $\Ext^j_{\G}(M, Y^n)=0$ for every $n\in \bbZ$ and every $j\geq 1$. A dimension shifting argument gives $\Ext^i_{\G}(M, Z^0)\cong \Ext^{i+k}_{\G}(M, Z^{-k})$, for every $k\geq 1$. Hence by the finiteness of the projective dimension of $M$   we conclude that $\Ext^i_{\G}(M, Z^0)=0$ for every $i\geq 1$.

 (2) The first statement  follows by~(1). The second statement follows by Corollary~\ref{C:cofibrant-fibrant}.

 The proof of the dual statement is obtained by considering the acyclic complex:
 \[0\to Z^0\to X^0\to X^1\to \dots\to X^n\to \dots\]
 and using dimension shifting for the functor $\Hom_{\G}(- ,N)$.

\end{proof}
We consider now the particular case of a module category and
 we exhibit some situations in which the assumptions of the previous proposition are satisfied.

 Recall that $T$ is an $n$-tilting $R$-module if it has projective dimension at most $n$, $\Ext^i_R(T, T^{(\lambda)})=0$ for every cardinal $\lambda$ and every $i\geq 0$, and  the ring $R$ has a finite coresolution with terms in $\Add T$, where $\Add T$ denotes the class of direct summands of direct sums of copies of $T$.
 The cotorsion pair generated by $T$ is called $n$-tilting cotorsion pair.

 Dually, an $R$-module $C$ is $n$-cotilting if it has injective dimension at most $n$, $\Ext^i_R(C^{\lambda}, C)=0$ for every cardinal $\lambda$ and every $i\geq 0$, and  an injective cogenerator has a finite resolution with terms in $\Prod C$, where $\Prod C$ denotes the class of direct summands of direct products of copies of $C$.
 The cotorsion pair cogenerated by $C$ is called $n$-cotilting cotorsion pair.

  \begin{prop}\label{P:tilt-cotil} If $(\A, \B)$ is an $n$-tilting cotorsion pair in $\Modr R$, then $ex\B= \tilde \B$ and $dw\B=dg\B$. Hence there is a model structure in $\Ch(R)$ in which the fibrant objects are the complexes with components in the  $n$-tilting class $\B$ and the trivial objects are the acyclic complexes.

 Dually, if $(\A, \B)$ is an $n$-cotilting cotorsion pair in $\Modr R$, then $ex\A= \tilde \A$ and $dw\A=dg\A$.
 Hence there is a model structure in $\Ch(R)$ in which the cofibrant objects are the complexes with components in the  $n$-cotilting class $\A$ and the trivial objects are the acyclic complexes.
 \end{prop}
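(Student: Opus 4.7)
The strategy is to deduce the proposition from the machinery already set up. Proposition~\ref{P:finite-proj-dim}(2) turns an inclusion $\A\subseteq\clP$ into the identity $ex\B=\tilde\B$; Lemma~\ref{L:ex-tilde} upgrades this to $dw\B=dg\B$; and Corollary~\ref{C:cofibrant-fibrant}(1) then delivers the model structure $(dg\A,\E,dw\B)$ with the acyclic complexes as trivial objects and whose fibrant objects are exactly the complexes with components in $\B$. Dually, Proposition~\ref{P:finite-proj-dim}(4) takes $\B\subseteq\I$ into $ex\A=\tilde\A$ and Corollary~\ref{C:cofibrant-fibrant}(2) produces the cotilting model structure. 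Hence it suffices to verify the single inclusion $\A\subseteq\clP_n$ for an $n$-tilting cotorsion pair and, dually, $\B\subseteq\I_n$ for an $n$-cotilting cotorsion pair.

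Both inclusions follow from the same one-line dimension-shifting trick. For the tilting case, fix $A\in\A={}^{\perp}(T^{\perp_1})$ and any $N\in\Modr R$; let $M$ be the $n$-th cosyzygy in an injective coresolution of $N$. Because $T$ has projective dimension at most $n$,
$$\Ext^i_R(T,M)\cong\Ext^{i+n}_R(T,N)=0\qquad(i\geq 1),$$
so $M\in T^{\perp_1}=\B$ and therefore $\Ext^{n+1}_R(A,N)\cong\Ext^1_R(A,M)=0$. As $N$ is arbitrary, $\mathrm{pd}(A)\leq n$, so $\A\subseteq\clP_n\subseteq\clP$. For the cotilting case, fix $B\in\B$ and any $M\in\Modr R$, and let $\Syz{n}{M}$ be the $n$-th syzygy in a projective resolution of $M$. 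Because $C$ has injective dimension at most $n$,
$$\Ext^i_R(\Syz{n}{M},C)\cong\Ext^{i+n}_R(M,C)=0\qquad(i\geq 1),$$
so $\Syz{n}{M}\in{}^{\perp_1}C=\A$, whence $\Ext^{n+1}_R(M,B)\cong\Ext^1_R(\Syz{n}{M},B)=0$ and $B\in\I_n\subseteq\I$.

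No genuine obstacle is expected. The tilting and cotilting hypotheses guarantee that the cotorsion pair $(\A,\B)$ is complete and hereditary (the self-Ext-vanishing of $T$, resp. $C$, together with the finiteness of projective/injective dimension, forces both sides to be closed under higher Ext), so the preparatory results apply verbatim. The only conceptual input is the elementary dimension-shifting displayed above; everything else is bookkeeping through Proposition~\ref{P:finite-proj-dim}, Lemma~\ref{L:ex-tilde}, and Corollary~\ref{C:cofibrant-fibrant}.
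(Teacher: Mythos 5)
Your proposal is correct and follows the paper's proof exactly: reduce to the inclusions $\A\subseteq\clP_n$ (tilting) and $\B\subseteq\I_n$ (cotilting) and then invoke Proposition~\ref{P:finite-proj-dim} together with Lemma~\ref{L:ex-tilde} and Corollary~\ref{C:cofibrant-fibrant}. The only difference is that the paper cites these inclusions from the literature, whereas you supply the (correct) standard dimension-shifting argument for them in-line.
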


\begin{proof} If $(\A, \B)$ is a tilting (cotilting) cotorsion pair, then $\A\subseteq \clP_n$ ($\B\subseteq \I_n$), by \cite[Lemmas 13.10, 15.4]{GT12}. Hence the conclusion follows by Proposition~\ref{P:finite-proj-dim}.
\end{proof}
To exhibit other examples of cotorsion pairs $(\A, \B)$ satisfying the condition $ex\B=\tilde \B$ we use the notion of the closure of a cotorsion pair.

 Recall that a cotorsion pair  $(\A, \B)$ is \emph{closed} if $\A$ is closed under direct limits. Consider the lattice of cotorsion pairs, with respect to inclusion on the left component.
Since the cotorsion pair $(\Modr R, \Inj)$ is closed and the meet of closed cotorsion pairs is closed (see e.g. \cite{AT} or \cite[Lemma 6.1]{G6}), every cotorsion pair $(\A, \B)$ is contained in a smallest closed cotorsion pair, called the closure of $(\A, \B)$
\begin{nota}\label{N:notations} Let $R$ be a ring.
\begin{enumerate}
\item We denote by $\modr R$ the class of modules $M$ admitting a projective resolution of the form
\[\dots\to P_i\to P_{i-1}\to \dots\to P_1\to P_0\to M\to 0,\]
with $P_j$ finitely generated  for every $j \geq 0$.
\item For every $n\geq 0$, denote by $\clP_n(\modr R)$  the class $\clP_n\cap\modr R$ and by $\clP(\modr R)$  the class $ \clP\cap\modr R$.
\item The \emph{little finitistic dimension} of $R$ is the supremum of the projective dimension of modules in $\modr R$ having finite projective dimension.
\item The \emph{big projective (flat) finitistic dimension}  of $R$ is  the supremum of the projective (flat) dimension of modules having finite projective (flat) dimension.
\item Denote by $(\A^{\omega}, \B_{\omega})$ the complete hereditary cotorsion pair generated by $\clP(\modr R)$. By \cite[Theorem 2.3, Corollary 2.4]{AT}, its closure
\[(\A^{\infty}, \B_{\infty})
\]  is a complete cotorsion pair cogenerated by the class of pure injective modules belonging to $\B_{\omega}$, hence it is hereditary, since cosyzygies of pure injective modules of $\B_{\omega}$ are in $\B_{\omega}$. Moreover, $\A^{\infty}=\varinjlim \A^{\omega}=\varinjlim \clP(\modr R)$ and it is closed under pure epimorphic images.
\end{enumerate}
\end{nota}

\begin{rem}\label{R:sup-flat-if-cot} \begin{enumerate}\item Since $\varinjlim \clP_0(\modr R)$ is the class of flat modules, $\A^{\infty}$ contains all flat modules and it coincides with the class of flat modules if and only if every module in $\clP_n(\modr R)$ is projective, i.e. if the little finitistic dimension of $R$ is $0$.
\item By part (1), $\B_{\infty}$ is contained in the class of cotorsion modules and it is properly contained in it whenever the little finitistic dimension of $R$ is greater than $0$.
\item Moreover,  $\clP_1\subseteq \varinjlim \clP_1(\modr R)$, hence $\clP_1\subseteq \A^{\infty}$.
\item By \cite[Theorem 6.7~(vi)]{BH09}, if $R$ has a classical ring of quotients $Q$ such that $Q$ is Von Neumann regular or has big finitistic flat dimension $0$, then $\varinjlim \clP_1$ coincides with the class $\F_1$ of  modules of flat dimension at most $1$.
Hence $\A^{\infty}$ contains $\F_1$ and $ \B_{\infty}$ is contained in the class $\F_1{}^\perp{}$ which is also called the class of weakly injective modules (see \cite{FuLee1} and \cite{FuLee2}).
 In particular, this applies to any commutative ring such that the total quotient ring is a perfect ring or a Von Neumann regular ring.
\end{enumerate}
\end{rem}
  \begin{prop}\label{P:semihereditary} Let $R$ be a (coherent) ring. The class $\B_{\infty}$ coincides with the class of injective right  $R$-modules if and only if every module in $\modr R$ (every finitely presented module) has finite projective dimension. In particular, this applies to rings with finite little finitistic dimension and thus to right semihereditary rings.
  \end{prop}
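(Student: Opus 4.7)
The plan is to translate the equation $\B_{\infty}=\Inj$ into $\A^{\infty}=\ModR$ via the cotorsion pair identities, and then to exploit the description $\A^{\infty}=\varinjlim\clP(\modR)$ recalled in Notation~\ref{N:notations}. Since $\Ext^1_R(M,E)=0$ for every module $M$ and every injective $E$, the pair $(\ModR,\Inj)$ is itself a cotorsion pair; therefore $\B_{\infty}=\Inj$ if and only if $\A^{\infty}=\ModR$, and the proof reduces to characterizing the latter equality.

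For the direction $(\Rightarrow)$, assume $\A^{\infty}=\ModR$ and take $M\in\modR$. Then $M$ is finitely presented, and $M=\varinjlim_{i\in I} N_i$ with $N_i\in\clP(\modR)$. Finite presentability forces $\mathrm{id}_M$ to factor through some $N_{i_0}$, so $M$ is a direct summand of $N_{i_0}$; since $\clP$ is closed under direct summands, $M\in\clP$, i.e. $M$ has finite projective dimension. Under coherence, $\modR$ coincides with the class of finitely presented modules, so the same conclusion applies verbatim to every finitely presented module.

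For $(\Leftarrow)$, assume every $M\in\modR$ has finite projective dimension, so that $\clP(\modR)=\modR$. In the coherent setting $\modR$ is exactly the class of finitely presented modules, and the standard fact that $\ModR$ is locally finitely presentable shows every module is a filtered direct limit of finitely presented modules. Hence $\A^{\infty}=\varinjlim\modR=\ModR$, whence $\B_{\infty}=\Inj$. For the \emph{in particular} clause it suffices to verify that every module in $\modR$ has finite projective dimension under the stated hypotheses. Over a right semihereditary ring every finitely generated submodule of a projective module is projective, so given $M\in\modR$ with presentation $P_1\xrightarrow{d}P_0\to M\to 0$ by finitely generated projectives, $\Ker d$ is a finitely generated submodule of $P_1$ and hence projective, giving projective dimension at most one for $M$. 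The finite little finitistic dimension case is handled analogously.

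The main obstacle is the $(\Leftarrow)$ direction: it hinges on every module being a direct limit of modules in $\modR$, which is Lazard-type input that is immediate in the coherent setting but more delicate otherwise. Without coherence one would instead have to leverage the description of $\B_{\infty}$ as the right-hand class of the cotorsion pair cogenerated by the pure injective modules in $\B_{\omega}$, reducing the statement to checking that every such pure injective is already injective.
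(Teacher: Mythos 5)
Your proposal takes essentially the same route as the paper's proof: both reduce $\B_{\infty}=\Inj$ to $\A^{\infty}=\Modr R$, establish $(\Rightarrow)$ by the Lazard-type retract argument for a finitely presented object of $\varinjlim\clP(\modr R)$, and establish $(\Leftarrow)$ from closure of $\A^{\infty}$ under direct limits, finishing the semihereditary case by noting finitely presented modules there have projective dimension at most one. Your explicit observation that $(\Leftarrow)$ requires every module to be a direct limit of modules in $\modr R$ (automatic under coherence) is a fair caveat that the paper's own proof passes over silently, but it does not alter the argument.
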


 \begin{proof}  $ \B_{\infty}$ coincides with the class of injectives if and only if $\A^{\infty}=\Modr R$. If every module in $\modr R$  has finite projective dimension, then $\A^{\infty}=\Modr R$, since $\A^{\infty}$ is closed under direct limits. Conversely, if $\A^{\infty}=\Modr R$, then every finitely presented right module $X$ belongs to $\varinjlim \clP(\modr R)$, hence it is a summand of a finite direct sum of modules in $ \clP(\modr R)$. Thus $X$ has finite projective dimension and so does every module in $ \modr R$.

 The last statement follows easily. In particular,  if $R$ is right semihereditary, then every finitely presented right $R$-module has projective dimension at most one.\end{proof}

  We show now that $ex\B_{\infty}=\widetilde{\B_{\infty}}$. To this aim we apply the results proved in a recent paper ~\cite{BCIE} about periodic modules.
   Recall that a module $M$ is periodic with respect to a class $\C$ if there exists a short exact sequence $0\to M\to C\to M\to 0$ with $C\in \C$. A module $M$ is Fp-injective if $\Ext^1_R(X, M)=0$ for every finitely presented module $X$.

\begin{fact}\label{F:periodic}
 \begin{enumerate}
\item \cite[Proposition 3.8~(1)]{BCIE} every Fp-injective $\Inj$-periodic module is injective.
 \item \cite{EFI} If $\C$ is  a class closed under direct sums or direct products and  $\D$ is a class closed under direct summands, then the following are equivalent:
\begin{enumerate}
\item[(a)] Every cycle of an acyclic complex with components in $\C$ belongs to $\D$.
\item[(b)]  Every $\C$-periodic module belongs to $\D$.
\end{enumerate}

\end{enumerate}
\end{fact}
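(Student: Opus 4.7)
The statement has two parts with rather different characters, and I would address them separately.

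For part (1)---that every Fp-injective $\Inj$-periodic module $M$ is injective---the plan starts from the defining sequence $0\to M\to I\to M\to 0$ with $I$ injective. Applying $\Hom(X,-)$ and using $\Ext^{i}(X,I)=0$ for $i\geq 1$ gives dimension-shift isomorphisms $\Ext^{i}(X,M)\cong \Ext^{i+1}(X,M)$ for all $i\geq 1$ and every $X$. Hence it suffices to produce a single $k\geq 1$ such that $\Ext^{k}(X,M)=0$ for all $X$. For finitely presented $X$ this holds at $k=1$ by Fp-injectivity; the genuine work is to bootstrap the vanishing from finitely presented to arbitrary modules. The natural route is to prove that $M$ is pure-injective, because pure-injective together with Fp-injective forces injectivity: the embedding of $M$ into its injective envelope would be pure by Fp-injectivity, and a pure embedding whose source is pure-injective splits. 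The pure-injectivity of $M$ is extracted from the periodic sequence itself via a character-module/double-dual or cotorsion-envelope argument, and this is precisely the main obstacle; it is the nontrivial content of \cite[Proposition 3.8(1)]{BCIE}.

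For part (2) both implications are formal, once the right construction is identified.

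\emph{(a)$\Rightarrow$(b).} Given $M$ with $0\to M\to C\to M\to 0$ and $C\in\C$, splice countably many copies of this sequence into the acyclic complex
\[ \cdots\to C\to C\to C\to C\to\cdots \]
all of whose components lie in $\C$ and all of whose cycles equal $M$. Then (a) delivers $M\in\D$; closure of $\C$ under sums or products is not needed in this direction.

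\emph{(b)$\Rightarrow$(a).} Given an acyclic complex $X$ with $X^n\in\C$ and cycles $Z^n$, I would assemble a single acyclic complex $Y$ with \emph{constant} components and \emph{constant} cycles by taking $Y:=\bigoplus_{n\in\bbZ}X[n]$ in the direct-sum case (or $Y:=\prod_{n\in\bbZ}X[n]$ in the direct-product case). Reindexing the shifts shows $Y^m\cong\bigoplus_{k}X^k=:W\in\C$ and $Z^m(Y)\cong\bigoplus_{k}Z^k=:K$ for every $m$. Summing the canonical short exact sequences $0\to Z^n\to X^n\to Z^{n+1}\to 0$ over $n\in\bbZ$ produces
\[ 0\to K\to W\to K\to 0, \]
which exhibits $K$ as $\C$-periodic. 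Hypothesis (b) then gives $K\in\D$, and since $\D$ is closed under direct summands, each $Z^n\in\D$.

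The main obstacle is clearly (1); part (2) is a clean unrolling-and-shifting exercise whose only subtle point is choosing the shifted sum (or product) so that components and cycles become constant and periodicity of the cycles is exposed.
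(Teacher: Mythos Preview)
The paper does not prove this Fact at all; it is stated with citations to \cite{BCIE} and \cite{EFI} and used as a black box. So there is no ``paper's own proof'' to compare against, and your proposal is supplying arguments the paper deliberately outsources.

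That said, your outline is sound. For part~(1) you correctly isolate the dimension-shift $\Ext^{i}(X,M)\cong\Ext^{i+1}(X,M)$ and identify pure-injectivity of $M$ as the crux; you are right that this is exactly the nontrivial input from \cite{BCIE}, and you do not pretend to reprove it. For part~(2), both directions are correct and are essentially the standard arguments one finds in the cited source: splicing the periodic sequence for (a)$\Rightarrow$(b), and the shifted sum/product $Y=\bigoplus_{n}X[n]$ (resp.\ $\prod_{n}X[n]$) for (b)$\Rightarrow$(a). The key identifications $Y^m\cong\bigoplus_k X^k$ and $Z^m(Y)\cong\bigoplus_k Z^k$ are correct, and summing the sequences $0\to Z^n\to X^n\to Z^{n+1}\to 0$ over $n$ does yield $0\to K\to W\to K\to 0$ because direct sums (and, in module categories, direct products) are exact. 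Since each $Z^n$ is a summand of $K$, closure of $\D$ under summands finishes the job.
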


 \begin{prop}\label{P:ex-B-infty} The cotorsion pair $(\A^{\infty}, \B_{\infty})$ from Notation~
 \ref{N:notations}~(4) satisfies $ex\B_{\infty}=\widetilde{\B_{\infty}}$.
\end{prop}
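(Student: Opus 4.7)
The plan is to apply Fact~\ref{F:periodic}~(2) to translate the equality $ex\B_{\infty}=\widetilde{\B_{\infty}}$ into a statement about periodic modules, and then to invoke results from \cite{BCIE}. Since $\B_{\infty}=(\A^{\infty})^{\perp_1}$ is a right $\Ext^1$-orthogonal class it is closed under direct products (and under direct summands), so Fact~\ref{F:periodic}~(2), applied with $\C=\D=\B_{\infty}$, reduces the problem to showing that every $\B_{\infty}$-periodic module belongs to $\B_{\infty}$. Thus I fix $M$ fitting into a short exact sequence $0\to M\to B\to M\to 0$ with $B\in\B_{\infty}$; the task is to prove $M\in\B_{\infty}$.

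First, by Remark~\ref{R:sup-flat-if-cot}~(1), $\Flat\subseteq\A^{\infty}$, so $\B_{\infty}\subseteq\Cot$ and in particular $B$ is cotorsion. The cotorsion-periodicity theorem from \cite{BCIE} (the counterpart of Fact~\ref{F:periodic}~(1) for the flat/cotorsion pair) therefore forces $M\in\Cot$.

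Second, any $A\in\A^{\omega}=\clP(\modr R)$ has finite projective dimension $n$; hereditariness of $(\A^{\infty},\B_{\infty})$ gives $\Ext^k_R(A,B)=0$ for all $k\geq 1$, and iterating the long exact $\Ext$-sequence associated with $0\to M\to B\to M\to 0$ yields isomorphisms $\Ext^1_R(A,M)\cong \Ext^{k+1}_R(A,M)$ for every $k\geq 0$. Taking $k\geq n$ gives $\Ext^1_R(A,M)=0$, so $M\in \B_{\omega}\cap\Cot$.

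Finally, I must upgrade this to $M\in\B_{\infty}$, i.e., to $\Ext^1_R(A,M)=0$ for every $A=\varinjlim A_j\in\A^{\infty}=\varinjlim\A^{\omega}$ with $A_j\in\A^{\omega}$. This passage is the main obstacle, because it does not follow formally: $\Ext^1_R(-,M)$ is not continuous on direct limits in general. The vanishing will instead come from a pure-exact presentation of $A$ as a quotient of $\bigoplus_j A_j$, combined with the cotorsion property of $M$ established in the first step and the vanishing $\Ext^1_R(A_j,M)=0$ established in the second. This is precisely the flavor of the structural results on direct-limit-closed cotorsion pairs in \cite{BCIE} (see also \cite{AT}), which is why the cotorsion-periodicity input is indispensable and cannot be bypassed by the dimension-shifting argument alone.
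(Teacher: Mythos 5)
Your overall architecture coincides with the paper's: you reduce via Fact~\ref{F:periodic}~(2) to showing that every $\B_{\infty}$-periodic module lies in $\B_{\infty}$ (your closure checks on $\B_{\infty}$ are fine), and your dimension-shifting step giving $\Ext^1_R(A,M)=0$ for all $A\in\clP(\modr R)$, i.e.\ $M\in\B_{\omega}$, is exactly what the paper extracts from \cite[Lemma 3.4]{BCIE}. The problem is your third step. You correctly identify that passing from $M\in\B_{\omega}$ to $M\in\B_{\infty}=\big(\varinjlim\clP(\modr R)\big)^{\perp_1}$ is the crux, but the mechanism you sketch does not close it. From the pure presentation $0\to K\to\bigoplus_j A_j\to A\to 0$ and $\Ext^1_R(\bigoplus_j A_j,M)\cong\prod_j\Ext^1_R(A_j,M)=0$ you only get $\Ext^1_R(A,M)\cong\Coker\big(\Hom_R(\bigoplus_j A_j,M)\to\Hom_R(K,M)\big)$; killing this cokernel using the purity of the presentation requires $M$ to be \emph{pure-injective}, not merely cotorsion ($K$ has no reason to be flat, so cotorsionness of $M$ buys nothing here). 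Hence ``cotorsion $+$ in $\B_{\omega}$ $+$ pure presentation'' is not an argument, and the implicit claim $\B_{\omega}\cap\Cot\subseteq\B_{\infty}$ would itself require proof.

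The paper closes this gap by a precise appeal to \cite[Theorem 3.7]{BCIE}, whose hypotheses are that $\A^{\infty}=\varinjlim\clP(\modr R)$ is closed under pure epimorphic images (recorded in Notation~\ref{N:notations}) and that $M$ is $\B_{\infty}$-\emph{periodic}; the proof of that theorem uses the exact sequence $0\to M\to B\to M\to 0$ itself in a colimit construction in the style of Benson--Goodearl, not merely its consequences that $M$ is cotorsion and lies in $\B_{\omega}$. Your first step (cotorsion periodicity forcing $M\in\Cot$) is correct but is not used in the paper's route and cannot substitute for the periodicity hypothesis in the upgrade step. To repair the proof, replace your sketched third step by a citation of \cite[Theorem 3.7]{BCIE} with its actual hypotheses verified.
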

\begin{proof}  Let $M$ be a $\B_{\infty}$-periodic module. By \cite[Lemma 3.4]{BCIE} ${}^\perp M\supseteq \clP(\modR)$. As mentioned in Notation~
 \ref{N:notations}~(4), the class  $\A^{\infty}$ coincides with $\varinjlim \clP(\modR)$ and is closed under pure epimorphic images. By \cite[Theorem 3.7]{BCIE} ${}^\perp M\supseteq \A^{\infty}$, hence $M\in\B_{\infty}$. By Fact~\ref{F:periodic}~(2), $ex\B_{\infty}=\widetilde{\B_{\infty}}$ in $\Ch(R)$.
  \end{proof}

As a corollary we get  an improvement of ~\cite[Corollary 5.9]{Stopurity} in the case of a module category, since $ \B_{\infty}$ is in general properly contained in the class of cotorsion modules.
\begin{cor}\label{C:exInj}
Let $Y$ be an acyclic complex with injective components. Then every cycle of $Y$ belongs to $ \B_{\infty}$, hence $Y\in \widetilde{ \B_{\infty}}$.
\end{cor}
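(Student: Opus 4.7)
My plan is to obtain this corollary as an essentially immediate consequence of Proposition~\ref{P:ex-B-infty}, by reducing to the containment $\Inj \subseteq \B_{\infty}$.

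First I would verify that every injective module lies in $\B_{\infty}$. Since $(\A^{\infty},\B_{\infty})$ is a cotorsion pair in $\Modr R$ and every $A\in\A^{\infty}\subseteq\Modr R$ satisfies $\Ext^1_R(A,I)=0$ for $I$ injective, we have $\Inj\subseteq (\A^{\infty})^{\perp_1}=\B_{\infty}$. Consequently, if $Y$ is an acyclic complex with injective components, then in particular $Y^n\in\B_{\infty}$ for every $n\in\bbZ$, so $Y\in ex\B_{\infty}$.

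Now I would apply Proposition~\ref{P:ex-B-infty}, which gives $ex\B_{\infty}=\widetilde{\B_{\infty}}$. Therefore $Y\in\widetilde{\B_{\infty}}$, which by the definition of $\widetilde{\B_{\infty}}$ (Notation~\ref{N:notation}) means exactly that $Y$ is acyclic, has all components in $\B_{\infty}$, and has every cycle $Z^n(Y)$ in $\B_{\infty}$. This is the content of the corollary.

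There is really no obstacle here: the only nontrivial input is the identity $ex\B_{\infty}=\widetilde{\B_{\infty}}$, which has already been established, and the fact that $\Inj\subseteq\B_{\infty}$, which is automatic from the cotorsion pair axioms. The significance of the statement lies in the fact that $\B_{\infty}$ is in general strictly contained in the class of cotorsion modules (Remark~\ref{R:sup-flat-if-cot}(2)), so the conclusion genuinely refines the earlier result of \v{S}\v{t}ov\'{\i}\v{c}ek that was cited; the proof itself is only a packaging of previous results.
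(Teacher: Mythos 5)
Your proof is correct and is essentially identical to the paper's, which simply observes that $Y\in ex\B_{\infty}$ (implicitly using $\Inj\subseteq\B_{\infty}$, which you spell out) and then invokes Proposition~\ref{P:ex-B-infty}.
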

\begin{proof} By assumption $Y\in ex\B_{\infty}$, hence the conclusion follows by Proposition~\ref{P:ex-B-infty}.
 \end{proof}
 The next properties will be used in Section~\ref{S:tildeB}.

  \begin{lem}\label{L:Cot-inj} Let $(\A,\B)$ be a complete hereditary cotorsion pair in a Grothendieck category $\G$.
Let $\Inj$ denote the class of injective objects of $\G$. The following hold true:
\begin{enumerate}
\item[(1)] $\B{}^\perp{}\cap \B\subseteq \Inj$ and $\widetilde{\B}{}^\perp{}\cap dw \B\subseteq dw \Inj$.
\item[(2)]  $\widetilde{\B}{}^\perp{}\cap dw \B=dw \Inj$ \iff $\tilde\B\subseteq {}^\perp{} dw\Inj$.
\end{enumerate}
Moreover, if $\G=\Modr R$ and $\B$ contains the class $ \B_{\infty}$ defined in Notation~\ref{N:notations}~(5) then
\begin{enumerate}
\item[(3)] $\widetilde{\B}{}^\perp{}\cap dw \B=dg \Inj$.
\item[(4)] ${}^\perp{} dg\Inj\cap dw\B=ex \B$
\end{enumerate}

\end{lem}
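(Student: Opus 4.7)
For part (1), I would first establish the object-level inclusion $\B^\perp \cap \B \subseteq \Inj$. Given $N \in \B^\perp \cap \B$ and arbitrary $X \in \G$, the completeness of the cotorsion pair provides a special $\B$-preenvelope $0 \to X \to B \to A \to 0$. Applying $\Hom(-, N)$ yields $\Ext^1(B, N) \to \Ext^1(X, N) \to \Ext^2(A, N)$; the first term vanishes since $N \in \B^\perp$ and the third vanishes by heredity of $(\A, \B)$, forcing $\Ext^1(X, N) = 0$ for every $X$ and hence $N \in \Inj$. For the complex version I invoke the exact adjoint pair $(D^n, (-)^n)$: both functors are exact, so $(-)^n$ preserves injectives (as right adjoint to an exact functor), and applying $(-)^n$ to any injective resolution of $Y$ in $\Ch(\G)$ (which exists since $\G$ is Grothendieck) proves $\Ext^i_{\Ch}(D^n(B), Y) \cong \Ext^i(B, Y^n)$. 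A direct inspection shows $D^n(B) \in \widetilde{\B}$ whenever $B \in \B$ (its nontrivial cycle is $B$ itself), so the hypothesis $Y \in \widetilde{\B}^\perp$ gives $\Ext^1(B, Y^n) = 0$ for every $B \in \B$. Together with $Y^n \in \B$, the first claim applied to $Y^n$ places it in $\Inj$.

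Part (2) is then a formal manipulation. Part (1) already gives $\widetilde{\B}^\perp \cap dw\B \subseteq dw\Inj$. The inclusion $\Inj \subseteq \B$ (which holds for every complete cotorsion pair) yields $dw\Inj \subseteq dw\B$, so the asserted equality reduces to the single containment $dw\Inj \subseteq \widetilde{\B}^\perp$, and this is literally the same orthogonality condition as $\widetilde{\B} \subseteq {}^\perp dw\Inj$.

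For part (3), the key tool is the complete hereditary cotorsion pair $(\E, dg\Inj)$ in $\Ch(\G)$, obtained by applying Proposition~\ref{P:description-tilde} to the (trivially complete and hereditary) pair $(\G, \Inj)$. One direction is immediate: since $\widetilde{\B} \subseteq \E$ and $dg\Inj = \E^\perp$, one has $dg\Inj \subseteq \widetilde{\B}^\perp$, and trivially $dg\Inj \subseteq dw\Inj \subseteq dw\B$. For the reverse, take $Y \in \widetilde{\B}^\perp \cap dw\B$, which by (1) has injective components. Produce a special $dg\Inj$-preenvelope $0 \to Y \to I \to E \to 0$ with $I \in dg\Inj$ and $E \in \E$. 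Degreewise the row splits because $Y^n$ is injective, so each $E^n$ is an injective summand of $I^n$ and $E \in ex\Inj$. This is where the hypothesis $\B \supseteq \B_\infty$ is decisive: Corollary~\ref{C:exInj} forces $E \in \widetilde{\B_\infty} \subseteq \widetilde{\B}$, so $\Ext^1_{\Ch}(E, Y) = 0$, the sequence splits, and $Y$ is a direct summand of $I \in dg\Inj$, which is closed under direct summands.

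Part (4) is immediate from the same cotorsion pair: heredity gives ${}^\perp dg\Inj = \E$, and the definition of $ex\B$ yields ${}^\perp dg\Inj \cap dw\B = \E \cap dw\B = ex\B$. The main obstacle is the reverse inclusion in part (3): one must notice that the degreewise splitting of the preenvelope sequence automatically promotes the quotient $E$ to an acyclic complex of \emph{injectives}, at which point Corollary~\ref{C:exInj} applies thanks to $\B \supseteq \B_\infty$, and only then does the global splitting of the sequence become available. Without the containment $\B \supseteq \B_\infty$ there would be no reason for the cycles of an acyclic complex of injectives to land in $\B$, and the entire splitting argument would collapse.
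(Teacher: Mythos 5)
Your proof is correct and follows essentially the same route as the paper's: part (1) reduces the complex statement to the object statement via $\Ext^1_{\Ch(\G)}(D^n(B),Y)\cong\Ext^1_{\G}(B,Y^n)$, part (2) is the same formal orthogonality argument, and parts (3)--(4) use exactly the paper's mechanism of a special $dg\Inj$-preenvelope that splits degreewise by (1) and then globally via Corollary~\ref{C:exInj} and the hypothesis $\B\supseteq\B_{\infty}$. The only (harmless) variations are in (1), where you prove $\B^\perp\cap\B\subseteq\Inj$ by dimension-shifting along a special $\B$-preenvelope rather than by splitting an injective embedding using that $\B$ is coresolving.
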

\begin{proof}
(1) Let $B\in \B{}^\perp{}\cap \B$ and consider an exact sequence $0\to B\to I\to I/B\to 0$ with $I\in \Inj$. Then $I/B\in \B$, since $\B$ is coresolving, hence the sequence splits and $B$ is injective.

If $B\in \B$, then $D^n(B)\in \widetilde{\B}$ for every $n\in \bbZ$ and by \cite[Lemma 3.1]{G3}, $\Ext^1_{\Ch(\G)}(D^n(B), Y)\cong \Ext^1_{\G}(B, Y^n)$, for every complex $Y$. Thus if $Y\in \widetilde{\B}{}^\perp{}\cap dw \B$, then $Y^n\in \B{}^\perp{}\cap\B$ for every $n\in \bbZ$. By the above we conclude that $Y\in dw \Inj$.

(2) If $\tilde\B\subseteq {}^\perp{} dw\Inj$, then $\tilde\B{}^\perp{}\supseteq  ({}^\perp{} dw\Inj){}^\perp{}=dw\Inj$, by \cite[Proposition 4.4]{G4}, hence by part (1) $\tilde{\B}{}^\perp{}\cap dw \B=dw \Inj$.

Conversely, if $\widetilde{\B}{}^\perp{}\cap dw \B=dw \Inj$, then $dw\Inj\subseteq \tilde\B^\perp$, hence $\tilde\B\subseteq {}^\perp{}(\tilde\B{}^\perp{})\subseteq ^\perp{}dw\Inj.$

(3) We show the inclusion $\widetilde{\B}{}^\perp{}\cap dw \B\subseteq dg \Inj$. Let $Y\in \widetilde{ \B}{}^\perp{}\cap dw \B$; using the complete cotorsion pair $(\E, dg \Inj)$ in $\Ch(R)$ we can consider a  short exact sequence $(\ast)\quad 0\to Y\to dg I\to E\to 0$ with $dg I\in dg \Inj$ and $E$ an exact complex. By part (1) the sequence is degreewise splitting hence $E^n$ is an injective module for every $n\in \bbZ$ which means that $E\in ex \Inj$.
 By  Corollary~\ref{C:exInj}, $ex \Inj\subseteq \widetilde{ \B_{\infty}}\subseteq\widetilde{ \B}$, hence the sequence $(\ast)$ splits showing that $Y\in dg \Inj$.

 The other inclusion is  obvious since  ${}^\perp{} dg \Inj$ in $\Ch(R)$ is the class of acyclic complexes $\E$ and $\E\supseteq \widetilde{ \B}$.

 (4) Obvious, since ${}^\perp{} dg \Inj=\E$.  \end{proof}
\begin{rem} If  $\G$ has enough projective objects, then the dual of the statements in Lemma~\ref{L:Cot-inj}~(1) and (2) hold  substituting the right orthogonal with the left orthogonal and $\Inj$ with $\Proj$.
\end{rem}

 \section{Cotorsion pairs in exact categories}

 We state a result valid in general for cotorsion pairs in exact categories.
 \begin{prop}\label{P:inducing} Let $(\A, \B)$ be a (hereditary) complete cotorsion pair in an exact category $\C$ and let $\D$ be an extension closed subcategory of $\C$ with the exact structure induced by that of $\C$. If $\D$ contains $\A$ and is resolving in $\C$ or if $\D$ contains $\B$ and is coresolving in $\C$, (see Definition~\ref{D:thick}), then $(\A\cap\D, \ \B\cap\D)$ is a (hereditary) complete cotorsion pair in the exact category $\D$.

 \end{prop}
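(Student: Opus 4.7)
The plan is to treat the case where $\D$ is resolving in $\C$ and contains $\A$; the other case is symmetric. The crucial preliminary observation is that since $\D$ is extension-closed in $\C$, any conflation in $\C$ with end terms in $\D$ has its middle term in $\D$, and hence $\Ext^1_\D(X,Y)$ (computed via conflations in $\D$ with its induced exact structure) coincides with $\Ext^1_\C(X,Y)$ for all $X,Y\in\D$. This identification will be used silently throughout.

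Next I verify the orthogonality conditions in $\D$. The inclusions $\B\cap\D\subseteq(\A\cap\D)^{\perp_1}$ and $\A\cap\D\subseteq{}^{\perp_1}(\B\cap\D)$ are immediate. For the converse direction on the right: if $Y\in\D$ satisfies $\Ext^1_\D(X,Y)=0$ for every $X\in\A\cap\D$, then since $\A\subseteq\D$ this says $\Ext^1_\C(A,Y)=0$ for every $A\in\A$, so $Y\in\B\cap\D$. For the converse on the left: given $X\in\D$ with $\Ext^1_\D(X,B)=0$ for every $B\in\B\cap\D$, use the special $\A$-precover in $\C$, namely a conflation $0\to B\to A\to X\to 0$ with $A\in\A$ and $B\in\B$. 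Since $A,X\in\D$ and $\D$ is resolving in $\C$, we have $B\in\D$, hence $B\in\B\cap\D$; the hypothesis then forces the sequence to split, so $X$ is a direct summand of $A\in\A$, and hence $X\in\A$ by the automatic summand-closure of the left half of a cotorsion pair.

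Completeness in $\D$ is then almost free. For $D\in\D$, use the two special sequences provided by the completeness of $(\A,\B)$ in $\C$: in $0\to B\to A\to D\to 0$ with $A\in\A\subseteq\D$ and $D\in\D$, the resolving property places $B\in\D$, yielding a special $\A\cap\D$-precover; in $0\to D\to B'\to A'\to 0$ with $A'\in\A\subseteq\D$ and $D\in\D$, extension-closure places $B'\in\D$, yielding a special $\B\cap\D$-preenvelope. For the hereditary statement, by the characterization recalled after Definition~\ref{D:thick} it suffices to show that $\A\cap\D$ is resolving in $\D$: given a conflation $0\to M\to N\to P\to 0$ in $\D$ with $N,P\in\A\cap\D$, the resolving property of $\A$ in $\C$ yields $M\in\A$, while the resolving property of $\D$ in $\C$ yields $M\in\D$.

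The main obstacle I anticipate is conceptual rather than technical: making sure the Ext-computations match between $\C$ and $\D$, and realizing that the reverse orthogonality $\bigl({}^{\perp_1}(\B\cap\D)\bigr)\cap\D\subseteq\A$ cannot be proved directly from the cotorsion pair $(\A,\B)$ in $\C$ (because one only has vanishing against the smaller class $\B\cap\D$) but must instead be extracted by splitting a special $\A$-precover that, thanks to $\A\subseteq\D$ and the resolving hypothesis, lives entirely inside $\D$. Once this splitting argument is in place, everything else reduces to routine extension-closure and summand-closure bookkeeping.
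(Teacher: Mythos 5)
Your proof is correct and follows essentially the same strategy as the paper's: the paper treats the dual case ($\D\supseteq\B$ coresolving) and splits a special $\B$-preenvelope to identify $(\A\cap\D)^{\perp_1}$, exactly mirroring your splitting of a special $\A$-precover to identify ${}^{\perp_1}(\B\cap\D)$, with completeness obtained by restricting the special sequences in the same way. Your explicit remarks on the agreement of $\Ext^1_\D$ with $\Ext^1_\C$ and on the hereditary claim (via resolving subcategories) fill in details the paper leaves implicit, but the underlying argument is the same.
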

 \begin{proof} We prove the statement in  case $\D\supseteq \B$, the other case being similar. First we show that $(\A\cap\D, \B)$ is a cotorsion pair in $\D$.
 Clearly ${}^\perp{}\B= \A\cap \D$ in $\D$ and also $(\A\cap \D){}^\perp{} \supseteq \B$. We show that $(\A\cap \D){}^\perp{} =\B$ in $\D$.
 Let $D\in \D$ be such that $\Ext^1(X, D)=0$ for every $X\in \A\cap \D$. Since $(\A, \B)$ is complete, there is an exact sequence $0\to D\to B\to A\to 0$ in $\C$, with $B\in \B$ and $A\in \A$. Since $\D$ is coresolving  in $\C$ and contains $\B$, we have that $A\in \D$, hence $A\in A\cap\D$ showing that the exact sequence splits, thus $D\in \B$.

To show that $(\A\cap\D, \B)$ is complete, let $(\ast)\quad 0\to B\to A\to D\to 0$ be a special $\A $-precover of an object $D\in \D$, then $A\in \A\cap\D$, since $\D$ is extension closed, hence $(\ast)$ is a special $\A\cap \D $-precover of $D$. If $ (\ast\ast):\quad 0\to D\to B\to A\to 0$ is a special $\B $-preenvelope of $D\in \D$, then $A\in \D$ since $\D$ is coresolving, hence $(\ast\ast)$ is  special $\B$-preenvelope of $D$ \wrt to  $(\A\cap\D, \B)$. \end{proof}

The notions of injective and projective Hovey triples and of injective and projective cotorsion pairs have been introduced in \cite{G7} following the analogous concepts defined in \cite{Beck14} and \cite{G6}.

\begin{defn}  Assume that a WIC exact category $\C$ has enough injective objects.
A complete cotorsion pair $(\A, \B)$ in $\C$ is an \emph{injective cotorsion pair} if $\A$ is thick and contains the injective objects.
Symmetrically, assume that $\C$ has enough projective objects.
A complete cotorsion pair $(\A, \B)$ in $\C$ is a \emph{projective cotorsion pair} if $\B$ is thick and contains the projective objects.

\end{defn}
Thus, an injective cotorsion pair $(\A, \B)$ corresponds to the model structure $(\C, \A, \B)$ where all objects are cofibrant and a projective cotorsion pair $(\A, \B)$ corresponds to the model structure $ (\A, \B, \C)$ where all objects are fibrant.

\begin{defn} (\cite[Definition 4.3]{G7}) Let $\C$ be a WIC Frobenius category.
An injective complete cotorsion pair $(\A, \B)$ in $\C$ is called a \emph{localizing cotorsion pair}. If $(\A, \B)$ and $(\B, \D)$ are injective cotorsion pairs in $\C$, then $(\A, \B, \D)$ is called a \emph{localizing cotorsion triple} in $\C$.
\end{defn}

\begin{rem}\label{R:inj-proj-model} By \cite[Proposition 4.2]{G7}, a complete hereditary cotorsion pair  in a WIC Frobenius category is an injective cotorsion pair if and only if it is a projective cotorsion pair if and only if $\A$ is thick if and only if $\B$ is thick.

\end{rem}

From now on $\G$ will be a Grothendieck category.

For every complete cotorsion pair $(\A, \B)$ in a Grothendieck category $\G$ the classes $\A$ and $\B$ are extension closed subcategories of $\G$, hence they inherit the exact structure from the abelian structure of $\G$.

Moreover, it is obvious that they are idempotent complete.

 It is well known that a Grothendieck category has enough injectives.
When needed we will assume that $\G$ has enough projectives and enough flat objects.

We will denote by $\Inj$ and $\Proj$ the classes of injective and projective objects, respectively; by $\Flat$ the class of flat objects and by $\Cot$ the class of cotorsion objects.  We have the complete hereditary cotorsion pairs $(\Proj, \G)$, $(\G, \Inj)$ and $(\Flat, \Cot)$, hence the four classes defined above are exact subcategories of $\G$.

We first collect some well known facts.
\begin{fact}\label{F:proj-inj} Let $(\A, \B)$ be a complete hereditary cotorsion pair in $\G$. The following hold true:
\begin{enumerate}
\item $\A$ has enough injectives and projectives: the projectives are the same as in $\G$ and the injectives are the objects in $\A\cap\B$.
\item $\B$ has enough injectives and projectives: the injectives are the same as in $\G$ and the projectives are the objects in $\A\cap\B$.
\item (\cite[Corollary 2.9]{G7} $\Ch(\A)$ has enough injectives and projectives: the projectives are the same as in $\Ch(\G)$ and the injectives are the contractible complexes with components in $\A\cap\B$.
\item (\cite[Corollary 2.9]{G7} $\Ch(\B)$ has enough injectives and projectives: the injective are the same as in $\Ch(\G)$ and the projectives are the contractible complexes with components in $\A\cap\B$.
\item (\cite[Corollary 2.8]{G7} $\Ch(\A)_{dw}$ and $\Ch(\B)_{dw}$ are Frobenius exact category with the  projective-injective objects being the contractible complexes with terms in $\A$ or $\B$ respectively.
\end{enumerate}
\end{fact}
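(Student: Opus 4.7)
The plan is to note that parts (3)--(5) are obtained verbatim from Gillespie~\cite{G7} (Corollaries 2.8 and 2.9 there), so the essential task is to verify (1) and (2), which are standard translations of the cotorsion pair data into the inherited exact structures on $\A$ and $\B$. The two halves are entirely dual, so I would write out (1) in detail and state (2) as a symmetric assertion.

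For part (1), I would first identify the projectives. Every $\G$-projective $P$ satisfies $\Ext^1_\G(P,B)=0$ for all $B\in\B$, giving $\Proj\subseteq {}^{\perp_1}\B=\A$; such $P$ is clearly projective in the induced exact structure on $\A$. Conversely, if $P\in\A$ is $\A$-projective, I would pick a short exact sequence $0\to K\to P_0\to P\to 0$ in $\G$ with $P_0\in\Proj$ (using enough projectives in $\G$). Since $\A$ is resolving by the hereditary hypothesis, $K\in\A$, and because $\A$ is extension closed in $\G$ we have $\Ext^1_\A(P,K)=\Ext^1_\G(P,K)=0$, so the sequence splits and $P$ is a summand of $P_0$, hence $\G$-projective. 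Enough projectives in $\A$ follows at once: a $\G$-projective resolution of $A\in\A$ has its kernels in $\A$ by the resolving property.

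For the injectives of $\A$, extension closure again yields $\Ext^1_\A(A',I)=\Ext^1_\G(A',I)$ for all $A',I\in\A$, so $I\in\A$ is $\A$-injective precisely when $I\in\A^{\perp_1}=\B$, i.e. when $I\in\A\cap\B$. Completeness of the cotorsion pair gives, for every $A\in\A$, a special $\B$-preenvelope $0\to A\to B\to A'\to 0$ with $B\in\B$ and $A'\in\A$; extension closure of $\A$ forces $B\in\A\cap\B$, so $\A$ has enough injectives. Part (2) is obtained by the dual argument: use $\Inj\subseteq \A^{\perp_1}=\B$, the coresolving property of $\B$ under the hereditary hypothesis, and the extension closure of $\B$, applied to a special $\A$-precover of a given object of $\B$.

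There is no genuine obstacle here. The two recurring ingredients are the orthogonality inclusions $\Proj\subseteq\A$ and $\Inj\subseteq\B$ coming from the cotorsion pair, together with the identification of $\Ext^1$ groups via extension closure. The single point worth highlighting is that the resolving/coresolving property --- equivalently, the hereditary hypothesis --- is exactly what forces the third terms appearing in special precovers and preenvelopes to remain inside $\A$ or $\B$, so that the relevant short exact sequences land in $\A\cap\B$.
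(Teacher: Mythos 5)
Your proposal is correct and matches the paper's treatment: the paper states this as a collection of well-known facts, citing \cite{G7} for (3)--(5) and leaving (1)--(2) unproved, and your argument for (1)--(2) is exactly the standard one (orthogonality gives $\Proj\subseteq\A$ and $\Inj\subseteq\B$, extension closure identifies the $\Ext^1$ groups, and the resolving/coresolving property keeps the third terms of special approximations inside $\A$ or $\B$). The only point to keep in mind is that the "enough projectives'' halves of (1)--(3) tacitly use the paper's standing convention that $\G$ has enough projectives when needed, which your appeal to a projective presentation in $\G$ correctly reflects.
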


\begin{rem}\label{R:induced} If $(\A, \B)$ is a complete hereditary cotorsion pair in a Grothendieck category $\G$, Proposition~\ref{P:inducing} tells us that $(\B, \Inj)$ and $(\A\cap \B, \B)$  are complete hereditary cotorsion pairs in the exact category $\B$; $(\Proj, \A)$ and $(\A, \A\cap \B)$ are complete hereditary cotorsion pairs  in the exact category $\A$.
\end{rem}

 The next Proposition~\ref{P:prop7.3} is a generalization of \cite[Proposition 7.3]{G7} which was formulated for the case of the cotorsion pair $(\F, \C)$ in a module category. %

Moreover, in Proposition~\ref{P:prop7.3-dual} we state a generalization of the dual of \cite[Proposition 7.3]{G7}.

\begin{prop}\label{P:prop7.3} Let $(\A, B)$ be a complete hereditary cotorsion pair in a Grothendieck category $\G$ and let $(\hat\A, \hat\B)$ be a complete cotorsion pair in $\Ch(\G)$ with $\hat\A\subseteq dw\A$. Assume that $\hat\A$ is thick in the exact category $\Ch(\A)$ and that it contains the contractible complexes with terms in $\A$.
Then, \[\Big(\hat\A, \hat\B\cap dw \A\Big)\] is an injective cotorsion pair in $\Ch(\A)$.
Moreover,  \[\Big(\hat\A, [\hat\B\cap dw \A]_K\Big)\] is a localizing cotorsion pair $\Ch(\A)$ in the Frobenius category $\Ch(\A)_{dw}$, where a complex $X\in \Ch(\A)$ belongs to $[\hat\B\cap dw \A]_K$ if and only if it is chain homotopy equivalent to a complex in $\hat\B\cap dw \A$.
\end{prop}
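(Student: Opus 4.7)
The plan for the first statement is to apply Proposition~\ref{P:inducing} to $(\hat\A,\hat\B)$ in $\Ch(\G)$ with subcategory $\D=\Ch(\A)=dw\A$: this subcategory is extension-closed, contains $\hat\A$ by hypothesis, and is resolving in $\Ch(\G)$ since $\A$ is resolving in $\G$ by hereditariness of $(\A,\B)$. Proposition~\ref{P:inducing} then yields the complete cotorsion pair $(\hat\A,\hat\B\cap dw\A)$ in $\Ch(\A)$. To see that it is injective, $\hat\A$ is thick by hypothesis, and by Fact~\ref{F:proj-inj}(3) the injectives of $\Ch(\A)$ are contractible complexes with terms in $\A\cap\B$, so in particular contractible complexes with terms in $\A$, and hence lie in $\hat\A$.

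For the second statement, $\Ch(\A)_{dw}$ is Frobenius with injective-projective objects the contractible complexes having terms in $\A$ (Fact~\ref{F:proj-inj}(5)); these lie in $\hat\A$ by hypothesis, and the thickness of $\hat\A$ in $\Ch(\A)$ descends to the Frobenius structure as the latter has fewer admissible exact sequences. It remains to establish that $(\hat\A,[\hat\B\cap dw\A]_K)$ is a complete cotorsion pair in $\Ch(\A)_{dw}$, and the central tool throughout will be the identity $\Ext^1_{dw}(X,Y)\cong\Hom_{\K(\A)}(X,Y[1])$ supplied by $(\ast)$.

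The easy inclusion $[\hat\B\cap dw\A]_K\subseteq\hat\A^{\perp_1}$ (with Ext taken in $\Ch(\A)_{dw}$) follows at once: if $X\in\hat\A$ and $Y$ is chain homotopy equivalent to some $B\in\hat\B\cap dw\A$, then
\[
\Ext^1_{dw}(X,Y)=\Hom_{\K(\A)}(X,B[1])=\Ext^1_{dw}(X,B)\subseteq\Ext^1_{\Ch(\G)}(X,B)=0.
\]
For completeness, given $Y\in\Ch(\A)$ I would take a special preenvelope $0\to Y\mapr{f} B\to A\to 0$ in $\Ch(\A)$ supplied by the first statement (so $B\in\hat\B\cap dw\A$ and $A\in\hat\A$), and form the mapping cylinder of $f$, obtaining a degreewise-split sequence $0\to Y\to\operatorname{Cyl}(f)\to\cone(f)\to 0$ in $\Ch(\A)_{dw}$. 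The cylinder $\operatorname{Cyl}(f)$ is chain homotopy equivalent to $B$ in $\K(\A)$, hence lies in $[\hat\B\cap dw\A]_K$; and $\cone(f)$ sits in a degreewise-exact sequence $0\to I(Y)\to\cone(f)\to A\to 0$ with $I(Y)=\cone(\operatorname{id}_Y)$ a contractible complex with terms in $\A$ (hence in $\hat\A$ by hypothesis) and $A\in\hat\A$, so by thickness $\cone(f)\in\hat\A$. This provides the special $[\hat\B\cap dw\A]_K$-preenvelope of $Y$; a dual construction applied to a special $\hat\A$-precover in $\Ch(\A)$ produces the special $\hat\A$-precover in $\Ch(\A)_{dw}$.

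The hard part will be the reverse orthogonality $\hat\A^{\perp_1}\subseteq[\hat\B\cap dw\A]_K$. Given $Y\in\hat\A^{\perp_1}$, the preenvelope sequence above splits in $\Ch(\A)_{dw}$ (since $\Ext^1_{dw}(\cone(f),Y)=0$), giving $\operatorname{Cyl}(f)\cong Y\oplus\cone(f)$ in $\Ch(\A)$ and exhibiting $Y$ as an actual direct summand of the object $\operatorname{Cyl}(f)\in[\hat\B\cap dw\A]_K$. To finish I must produce an actual $B'\in\hat\B\cap dw\A$ with $Y\simeq B'$ in $\K(\A)$; the approach is to invoke idempotent completeness of $\K(\A)$ (valid since $\A$ is closed under coproducts as the left class of a cotorsion pair in the Grothendieck category $\G$) and then to realize the split idempotent on $B\in\hat\B\cap dw\A$ corresponding to this summand as a homotopy colimit whose target stays in $\hat\B\cap dw\A$.
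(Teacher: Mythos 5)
Your treatment of the first statement is correct and coincides with the paper's: apply Proposition~\ref{P:inducing} to $\D=\Ch(\A)$ and observe that the injectives of $\Ch(\A)$ are among the contractible complexes with terms in $\A$, hence lie in $\hat\A$. For the second statement the paper does not argue directly: it checks that $\hat\B\subseteq dw\B$ (so that every conflation $0\to Y\to Z\to X\to 0$ in $\Ch(\A)$ with $Y\in\hat\B\cap dw\A$ and $X\in\hat\A$ is degreewise split) and then invokes \cite[Theorem 6.3, Proposition 6.4]{G7}. Your plan to reprove this by hand is legitimate, and your mapping-cylinder construction of special $[\hat\B\cap dw\A]_K$-preenvelopes in $\Ch(\A)_{dw}$ is correct (including the identification of $\cone(f)$ as an element of $\hat\A$ via the conflation $0\to\cone(\operatorname{id}_Y)\to\cone(f)\to A\to 0$ and thickness).

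There is, however, a genuine gap in the inclusion $\hat\A^{\perp_1}\subseteq[\hat\B\cap dw\A]_K$. Your proposed finishing move --- split the idempotent on $B$ in $\K(\A)$ via a homotopy colimit ``whose target stays in $\hat\B\cap dw\A$'' --- cannot work as stated: the B\"okstedt--Neeman telescope is built from countable coproducts of copies of $B$, and $\hat\B\cap dw\A$, being the right-hand class of a cotorsion pair, is closed under products but not under coproducts, so there is no reason for the telescope to remain in $\hat\B\cap dw\A$. (Idempotent completeness of $\K(\A)$ only re-proves that the summand exists; it does not identify it with an object of $\hat\B\cap dw\A$.) The gap is repairable with what you already have: from $Y\oplus\cone(f)\cong\operatorname{Cyl}(f)\cong B\oplus\cone(\operatorname{id}_Y)$ one sees that $\cone(f)$ lies in $\hat\A\cap\hat\A^{\perp_1}$ (the $\Ext^1_{dw}$-orthogonal), because $B$ and $\cone(\operatorname{id}_Y)$ both lie in $\hat\A^{\perp_1}$ and this class is closed under summands. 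Any such complex is contractible: the degreewise split conflation $0\to\cone(f)\to\cone(\operatorname{id}_{\cone(f)})\to\cone(f)[1]\to 0$ has third term in $\hat\A$ by thickness, hence splits, exhibiting $\cone(f)$ as a summand of a contractible complex. Therefore $Y\cong B$ already in $\K(\A)$, with $B\in\hat\B\cap dw\A$, and no idempotent splitting is needed. Finally, your ``dual construction'' for special precovers is not symmetric and, if carried out, requires exactly the fact $\hat\B\subseteq dw\B$ that the paper proves and you omit (to see that the kernel of the cocylinder fibration is degreewise split over the original kernel); it is simpler to deduce enough projectives from enough injectives by Salce's lemma, since $\Ch(\A)_{dw}$ is Frobenius.
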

\begin{proof} The fact that $\Big(\hat\A, \hat\B\cap dw \A\Big)$ is a complete cotorsion pair follows by Proposition~\ref{P:inducing} and it is an injective cotorsion pair by definition and by the assumptions on $\hat\A$. Moreover, $\hat\B\subseteq dw \B$. In fact, for every $n\in \bbZ$ and every  $A\in \A$  the contractible complex $D^n(A)$ is in $\hat\A$, hence $\Ext^1_{\Ch}(D^n(A),  B)=0$, for every $B\in\hat\B$ and then $B^n$ belongs to $\B$, by \cite[Lemma 3.1]{G1}. Hence, a short exact sequence $0\to Y\to Z\to X\to 0$ in $\Ch(\A)$ with $Y\in\hat\B\cap dw \A$ and $X\in \hat\A$ is degreewise splitting. The second statement follows by \cite[Theorem 6.3, Proposition 6.4]{G7}.
\end{proof}
\begin{prop}\label{P:prop7.3-dual} Let $(\A, B)$ be a complete hereditary cotorsion pair in $\G$ and let $(\hat\A, \hat\B)$ be a complete cotorsion pair in $\Ch(\G)$ with $\hat\B\subseteq dw\B$. Assume that $\hat\B$ is thick in the exact category $\Ch(\B)$ and contains the contractible complexes with terms in $\B$.
Then, \[\Big(\hat\A\cap dw \B, \hat\B\Big)\] is a projective cotorsion pair in $\Ch(\B)$.
Moreover,  \[\Big([\hat\A\cap dw \B]_K, \hat\B\Big)\] is a localizing cotorsion pair  in the Frobenius category $\Ch(\B)_{dw}$,  where a complex $X\in \Ch(\B)$ belongs to $[\hat\A\cap dw \B]_K$ if and only if it is chain homotopy equivalent to a complex in  $\hat\A\cap dw \B$.
\end{prop}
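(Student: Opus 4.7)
The plan is to dualize the argument of Proposition~\ref{P:prop7.3}. First, I would observe that $dw\B$ is extension-closed in $\Ch(\G)$ and coresolving there: since $\B$ is coresolving in $\G$ (the pair $(\A,\B)$ being hereditary) and exactness is tested degreewise, any short exact sequence in $\Ch(\G)$ whose first two terms lie in $dw\B$ also has its third term in $dw\B$. Because $dw\B$ contains $\hat\B$ by hypothesis, Proposition~\ref{P:inducing} applies and yields that $(\hat\A\cap dw\B,\hat\B)$ is a complete hereditary cotorsion pair in the exact category $\Ch(\B)$.

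To verify this is a projective cotorsion pair, note that $\hat\B$ is thick in $\Ch(\B)$ by hypothesis. By Fact~\ref{F:proj-inj}(4) the projective objects of $\Ch(\B)$ are exactly the contractible complexes with components in $\A\cap\B$; in particular they are contractible complexes with terms in $\B$, so they lie in $\hat\B$ by assumption.

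For the second statement I would pass from the degreewise-exact structure on $\Ch(\B)$ to the degreewise-split structure on $\Ch(\B)_{dw}$, which requires showing that the relevant conflations split in each degree. To this end I first prove $\hat\A\subseteq dw\A$: for every $B\in\B$ and $n\in\bbZ$ the disk complex $D^n(B)$ is contractible with terms in $\B$ and thus lies in $\hat\B$; by the adjunction formula $\Ext^1_{\Ch(\G)}(X,D^n(B))\cong\Ext^1_\G(X^{n+1},B)$ (the dual of \cite[Lemma 3.1]{G1}), the vanishing of the left-hand side for $X\in\hat\A$ forces $X^k\in{}^\perp\B=\A$ for all $k$. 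Then $\hat\A\cap dw\B\subseteq dw(\A\cap\B)$, and since components in $\A\cap\B$ are projective in $\B$ by Fact~\ref{F:proj-inj}(2), every short exact sequence $0\to Y\to Z\to X\to 0$ in $\Ch(\B)$ with $X\in\hat\A\cap dw\B$ and $Y\in\hat\B$ splits in each degree.

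Finally, invoking \cite[Theorem 6.3, Proposition 6.4]{G7} I can transport the projective cotorsion pair $(\hat\A\cap dw\B,\hat\B)$ in $\Ch(\B)$ to the localizing cotorsion pair $([\hat\A\cap dw\B]_K,\hat\B)$ in the Frobenius category $\Ch(\B)_{dw}$. The most delicate step is proving $\hat\A\subseteq dw\A$ from the hypothesis that $\hat\B$ contains contractibles with terms in $\B$---an exact mirror of the step showing $\hat\B\subseteq dw\B$ in the proof of Proposition~\ref{P:prop7.3}, but relying on the Ext adjunction in the opposite variable.
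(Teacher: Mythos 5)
Your proposal is correct and follows essentially the same route as the paper's proof: Proposition~\ref{P:inducing} for the cotorsion pair, the disk complexes $D^n(B)\in\hat\B$ together with the Ext adjunction to get $\hat\A\subseteq dw\A$ and hence degreewise splitting, and the dual of \cite[Theorem 6.3, Proposition 6.4]{G7} for the localizing statement. You merely spell out a few details the paper leaves implicit (that $dw\B$ is coresolving, and that the projectives of $\Ch(\B)$ lie in $\hat\B$).
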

\begin{proof}
The fact that $\Big(\hat\A\cap dw \B, \hat\B\Big)$ is a complete cotorsion pair follows by Proposition~\ref{P:inducing} and it is a projective cotorsion pair by definition and by the assumptions on  $\hat\B$.  Moreover, $\hat\A\subseteq dw\A$. In fact, for every $n\in \bbZ$ and every  $B\in \B$  the contractible complex $D^n(B)$ is in $\hat\B$, hence $\Ext^1_{\Ch}(A, D^n(B))=0$, for every $A\in\hat\A$ and then $A^n$ belongs to $\A$, by \cite[Lemma 3.1]{G1}. Hence, a short exact sequence $0\to Y\to Z\to X\to 0$ in $\Ch(\B)$ with $Y\in \hat\B$ and $X\in \hat\A\cap dw \B$ is degreewise splitting.
Note that Theorem 6.3 and Proposition 6.4 in \cite{G7} have  obvious dual statements for projective cotorsion pairs from which the second statement of our proposition follows.\end{proof}

\section{Projective cotorsion pairs in the exact category $\Ch(\B)$}\label{S:B}

For every complete hereditary cotorsion pair $(\A, \B)$ in a Grothendieck category $\G$ we look for cotorsion pairs on the exact category $\Ch(\B)$ of unbounded complexes with terms in $\B$ in order to describe the homotopy category $K( \B)$ that is the quotient $\Ch( \B)/\sim$ where $\sim$ denotes the chain homotopy equivalence.

We start by choosing projective cotorsion pairs in $\Ch(\G)$ satisfying the assumptions of Proposition~\ref{P:prop7.3-dual}.
When needed we assume some extra conditions on the Grothendieck category $\G$, like in example (3) below.

\begin{expl}\label{E:examples-proj} Let $(\A, \B)$ be a complete hereditary cotorsion pair in a  Grothendieck category $\G$.
 \begin{enumerate}
\item The complete hereditary cotorsion pair $(dg \A, \widetilde{\B})$ in $\Ch(\G)$ satisfies the conditions in  Proposition~\ref{P:prop7.3-dual}, hence we have the projective cotorsion pair:
 \[\M_1=\Big(dg{\A}\cap dw \B, \widetilde{\B}\Big)\] in $\Ch(\B)$ and the localizing cotorsion pair $\Big([dg{\A}\cap dw \B]_K,  \widetilde{\B}\Big)$ in $\Ch(\B)_{dw}.$
\item The complete hereditary cotorsion pair $(\widetilde{\A}, dg \B)$ in $\Ch(\G)$ satisfies the conditions in  Proposition~\ref{P:prop7.3-dual}, hence we have the projective cotorsion pair:
 \[\M_2=\Big(\widetilde{\A}\cap dw \B, dg \B\Big)\] in $\Ch(\B)$ and the localizing cotorsion pair $\Big([\widetilde{\A}\cap dw \B]_K, dg (\B)\Big)$ in $\Ch(\B)_{dw}.$

\item If $\A$ contains a generator of finite projective dimension, then by Proposition~\ref{P:complete?}~(3), $({}^\perp{} ex\B, ex\B)$ is a complete hereditary cotorsion pair in $\Ch(\G)$ and it satisfies the conditions in  Proposition~\ref{P:prop7.3-dual}, hence  we have the projective cotorsion pair:
 \[\M_3=({}^\perp{} ex\B\cap dw \B, ex\B)\] in $\Ch(\B)$ and the localizing cotorsion pair $\Big([{}^\perp{} ex\B\cap dw \B]_K, ex\B\Big)$ in $\Ch(\B)_{dw}.$

 \end{enumerate}
 \end{expl}
 \begin{rem}\label{R:thick-1} The three examples above satisfy Proposition~\ref{P:prop7.3-dual} since $\tilde{\B}$, $dg\B$ and $ex\B$  are thick in $\Ch(\B)$ by Lemma~\ref{L:HOM} and they clearly contain the contractible complexes with terms in $\B$.
 \end{rem}

\begin{thm}\label{T:recollement-proj} Let $(\A, \B)$ be a complete hereditary cotorsion pair in a  Grothendieck category $\G$ such that $\A$ contains a generator of finite projective dimension.

The three projective cotorsion pairs in Example~\ref{E:examples-proj} satisfy the conditions of \cite[Theorem 3.5]{G7}, so that we get the recollement:

\[
\xymatrix{\dfrac{\widetilde{\A}\cap dw \B}{\sim} \ar[rr]^{inc} &&\dfrac{dg{\A}\cap dw \B}{\sim} \ar@/^2pc/
[ll]\ar@/_2pc/[ll] \ar[rr]^{Q}
&&{\Ch(\B)/ex\B}\ar@/^2pc/ [ll] \ar@/_2pc/ [ll] }
\]
\vskip 0.7cm

where $\sim$ denotes the homotopy relation associated to the corresponding model structure and coincides with the chain homotopy relation; moreover, $inc$ is the inclusion and $Q$ is the quotient functor.
\end{thm}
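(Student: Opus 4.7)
The plan is to verify that the three projective cotorsion pairs $\M_1,\M_2,\M_3$ from Example~\ref{E:examples-proj} satisfy the compatibility hypotheses of \cite[Theorem~3.5]{G7} and then to identify the three terms of the resulting recollement.

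First, I would confirm existence. The assumption that $\A$ contains a generator of finite projective dimension is used precisely through Proposition~\ref{P:complete?}(4) to guarantee that $({}^{\perp}ex\B,ex\B)$ is a complete cotorsion pair in $\Ch(\G)$, so that $\M_3$ is defined. Combined with the complete hereditary cotorsion pairs $(dg\A,\widetilde{\B})$ and $(\widetilde{\A},dg\B)$ from Proposition~\ref{P:description-tilde}(3), applying Proposition~\ref{P:prop7.3-dual} three times produces $\M_1,\M_2,\M_3$ as projective cotorsion pairs on the exact category $\Ch(\B)$; the thickness of the right classes required by Proposition~\ref{P:prop7.3-dual} was already noted in Remark~\ref{R:thick-1}.

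Second, the crux is the compatibility among the three projective cotorsion pairs. The decisive identity is
\[
\widetilde{\B} \;=\; dg\B \cap ex\B,
\]
which is immediate from Proposition~\ref{P:description-tilde}(4): writing $\E$ for the class of acyclic complexes, $ex\B = dw\B\cap\E$ and $dg\B\subseteq dw\B$, so $dg\B\cap ex\B = dg\B\cap \E = \widetilde{\B}$. Symmetrically on the cofibrant side,
\[
\widetilde{\A}\cap dw\B \;=\; (dg\A\cap dw\B)\cap ex\B,
\]
so that the cofibrants of $\M_2$ are precisely the $ex\B$-trivial cofibrants of $\M_1$. This is the Becker-type compatibility that allows the two localization adjoint triples associated with the inclusions $\widetilde{\B}\subseteq dg\B$ and $\widetilde{\B}\subseteq ex\B$ to be glued into a recollement, as formalized in \cite[Theorem~3.5]{G7}.

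Third, I would identify the three terms as the homotopy categories of the three projective model structures. Since $\Ch(\B)_{dw}$ is a Frobenius exact category whose projective-injective objects are the contractible complexes with terms in $\B$ by Fact~\ref{F:proj-inj}(5), the results \cite[Theorem~6.3, Proposition~6.4]{G7} identify the homotopy category of any projective cotorsion pair $(\Q,\W)$ on $\Ch(\B)$ with $\Q/\!\sim$, where $\sim$ is the chain homotopy relation. This gives $(\widetilde{\A}\cap dw\B)/\!\sim$ on the left and $(dg\A\cap dw\B)/\!\sim$ in the middle; the right term is $\mathrm{Ho}(\M_3)$, presented as the Verdier-type localization $\Ch(\B)/ex\B$ obtained by inverting the weak equivalences of $\M_3$, whose trivial class is $ex\B$. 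The main obstacle I foresee is the verification that the three model structures actually assemble into a recollement, which is the substance of \cite[Theorem~3.5]{G7}; the essential input to feed into that theorem on our side is the pair of identities displayed above, together with the chain of inclusions $\widetilde{\A}\cap dw\B \subseteq dg\A\cap dw\B \subseteq dw\B$ among the cofibrant classes.
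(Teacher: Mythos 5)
Your proposal is correct and follows essentially the same route as the paper, which states the theorem without a separate proof and merely asserts that $\M_1,\M_2,\M_3$ satisfy the hypotheses of \cite[Theorem 3.5]{G7}: your verification of the compatibility identities $\widetilde{\B}=dg\B\cap ex\B$ and $\widetilde{\A}\cap dw\B=(dg\A\cap dw\B)\cap ex\B$ via Proposition~\ref{P:description-tilde}(4) is precisely the dual of the check the paper performs explicitly for the injective case just before Theorem~\ref{T:recollement-inj} (there phrased as $\R_2\subseteq\W_3$ and $\W_2\cap\W_3=\W_1$), and your identification of the three terms matches Remark~\ref{R:proj-recoll}. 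You also correctly locate the role of the hypothesis on the generator of finite projective dimension in Proposition~\ref{P:complete?}(4), which is needed for $\M_3$ to exist.
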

\begin{rem}\label{R:proj-recoll} In the above examples write $\M_i=(\C_i, \W_i)$, for every $i=1,2,3$.  We have that $\C_i\cap\W_i=\widetilde{\A}\cap\widetilde{\B}$. Moreover, dually to \cite[Proposition 3.2]{G7} $\C_i$ is a Frobenius category with the projective-injective objects being exactly the complexes in $\widetilde{\A}\cap\widetilde{\B}$. Thus $(\widetilde{\A}\cap dw\B)/\sim$ and $(dg{\A}\cap dw\B)/\sim$ are the stable categories and they are also equivalent to the homotopy categories $K(\widetilde{\A}\cap dw \B)$ and $K(dg{\A}\cap dw \B)$. Moreover,  all the three terms in the recollement are equivalent to the homotopy categories of the three model structures on $\Ch(\B)$ corresponding to the projective cotorsion pairs $\M_1, \M_2, \M_3$. Furthermore, $\Ch(\B)/ex\B$ is equivalent to the derived category of $R$ as we will see more explicitly later in Remark~\ref{R:derived-of-R}.
\end{rem}
\vskip 0.5cm
By \cite{Nee90}, the derived category of $\Ch(\B)$ is the quotient of $\Ch(\B)$ modulo the acyclic complexes in  $\Ch(\B)$, that is the complexes in $\widetilde{\B}$. Thus we need an exact model structure on $\Ch(\B)$ with $\widetilde{\B}$ as the class of trivial objects. This is provided by Example~\ref{E:examples-proj}~(1).
\begin{thm}\label{T:derived-B} In the setting of Example~\ref{E:examples-proj}~(1), $\M_1=\Big(dg{\A}\cap dw \B, \widetilde{\B}, dw\B\Big)$ is an exact model structure in the category $\Ch(\B)$.
 In particular, we can define the derived category $\D(\B)$ as the quotient  $\Ch(\B)/\widetilde{\B}$.

 Moreover, we have the following triangle equivalences between the derived category of $\Ch(\B)$ and the homotopy category of the model structure $\M_1$:
 \[\D(\B)\cong\ho(\M_1)\cong \dfrac{dg{\A}\cap dw \B}{\widetilde{\A}\cap\widetilde{\B}}.\]
 \end{thm}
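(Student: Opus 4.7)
First, I would verify the Hovey--Gillespie correspondence (Theorem~\ref{T:correspondence}) for the triple $\clQ = dg\A\cap dw\B$, $\W = \widetilde{\B}$, $\R = dw\B$ in the WIC exact category $\Ch(\B)$. The first of the two required complete cotorsion pairs, $(\clQ,\R\cap\W) = (dg\A\cap dw\B,\widetilde{\B})$, is precisely the projective cotorsion pair $\M_1$ supplied by Example~\ref{E:examples-proj}(1), and thickness of $\widetilde{\B}$ in $\Ch(\B)$ is Remark~\ref{R:thick-1}. For the second cotorsion pair $(\clQ\cap\W,\R)$ the key computation is
\[
(dg\A\cap dw\B)\cap\widetilde{\B}
\;=\;(dg\A\cap\E)\cap\widetilde{\B}
\;=\;\widetilde{\A}\cap\widetilde{\B},
\]
using $\widetilde{\B}\subseteq\E$ together with Proposition~\ref{P:description-tilde}(4). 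A short argument based on the hereditary property of $(\A,\B)$ shows that any complex in $\widetilde{\A}\cap\widetilde{\B}$ has all its cycle short exact sequences split (since $\Ext^1(Z^{n+1},Z^n)=0$ with $Z^n\in\B$ and $Z^{n+1}\in\A$), so it is a contractible complex with components in $\A\cap\B$; by Fact~\ref{F:proj-inj}(4) this is exactly the class of projective objects of $\Ch(\B)$, hence $(\widetilde{\A}\cap\widetilde{\B},\,dw\B)$ is the trivial complete cotorsion pair. The Hovey--Gillespie correspondence then produces the exact model structure $\M_1=(dg\A\cap dw\B,\widetilde{\B},dw\B)$.

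Next, by Neeman~\cite{Nee90} the derived category of the exact category $\B$ is the Verdier quotient $\D(\B)=\Ch(\B)/\widetilde{\B}$, since $\widetilde{\B}$ is precisely the class of complexes in $\Ch(\B)$ whose differentials factor through conflations of $\B$. The weak equivalences of the abelian model structure $\M_1$ are characterized as the maps with mapping cone in the trivial class $\widetilde{\B}$, which is exactly the saturated class inverted by the Verdier localization at $\widetilde{\B}$. Hence $\Ho(\M_1)\cong\D(\B)$ as triangulated categories.

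Finally, since $\M_1$ arises from a projective cotorsion pair every object of $\Ch(\B)$ is fibrant, so the standard description of the homotopy category of an abelian model structure (cf.\ Remark~\ref{R:proj-recoll} and the arguments of \cite{G7}) yields
\[
\Ho(\M_1)\;\cong\;\dfrac{dg\A\cap dw\B}{\widetilde{\A}\cap\widetilde{\B}},
\]
where one quotients the Frobenius subcategory of cofibrant objects by its class of projective--injective objects (equivalently, by chain homotopy, which coincides with left homotopy here). The main obstacle in this chain of reasoning is the core computation $(dg\A\cap dw\B)\cap\widetilde{\B}=\widetilde{\A}\cap\widetilde{\B}$ and the identification of this class with $\Proj\,\Ch(\B)$: this is where the hereditary hypothesis on $(\A,\B)$ is essential, as it forces cycles to live simultaneously in $\A$ and $\B$ and makes the cotorsion--pair $\Ext$-vanishing split all the cycle conflations. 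Once this step is secured, Hovey--Gillespie and the general theory of abelian model structures deliver both the existence of $\M_1$ and the two triangle equivalences.
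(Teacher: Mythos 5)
Your proposal is correct and follows the same route as the paper: the paper's proof is a one-line appeal to the correspondence between projective cotorsion pairs and exact model structures with all objects fibrant, and you simply unpack that correspondence by exhibiting the two complete cotorsion pairs $(dg\A\cap dw\B,\widetilde{\B})$ and $(\widetilde{\A}\cap\widetilde{\B},dw\B)$ and identifying $\widetilde{\A}\cap\widetilde{\B}$ with the projectives of $\Ch(\B)$ (as in Fact~\ref{F:proj-inj} and Remark~\ref{R:proj-recoll}). The remaining identifications of $\Ho(\M_1)$ with $\D(\B)$ and with the stable category of cofibrant objects are the standard facts the paper leaves implicit, and your arguments for them are fine.
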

 \begin{proof} The projective cotorsion pair $\Big(dg{\A}\cap dw \B, \widetilde{\B}\Big)$ in $\Ch(\B)$ of  Example~\ref{E:examples-proj}~(1) corresponds to  the exact model structure
 $\Big(dg{\A}\cap dw \B, \widetilde{\B}, dw \B\Big)$.
 \end{proof}
 Another way to obtain the exact model structure of Theorem~\ref{T:derived-B} is to use results by Gillespie in \cite{G4}, \cite{G8} and \cite{G9}.

 \begin{thm}\label{T:Gill-B} Let $(\A, \B)$ be a complete hereditary cotorsion pair generated by a set of objects
in a Grothendieck category $\G$. The two complete hereditary cotorsion pairs
 $({}^\perp{} dw\B, dw\B)$ and $(dg\A, \tilde\B)$ in $\Ch(\G)$  give rise to a cofibrantly generated model structure $\M=\Big(dg\A, \V, dw\B\Big)$ in $\Ch(\G )$ satisfying $\V\cap dw\B=\tilde\B$ and $\V\cap dg\A={}^\perp{} dw\B$ whose restriction in $\Ch(\B)$ is the exact model structure $\M_1=\Big(dg{\A}\cap dw \B, \widetilde{\B}, dw\B\Big)$ of Theorem~\ref{T:derived-B}.

 Moreover, if $(\A, \B)$ is generated by a set of finitely presented objects then the model structure $\M=\Big(dg\A, \V, dw\B\Big)$ in $\Ch(R)$ is finitely generated hence its homotopy category is compactly generated.

\end{thm}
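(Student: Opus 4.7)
The plan is to apply the glueing theorem for two compatible complete cotorsion pairs (developed by Gillespie in \cite{G4}, \cite{G8}, \cite{G9}) to produce the Hovey triple $\M = (dg\A, \V, dw\B)$ on $\Ch(\G)$, and then to restrict it to $\Ch(\B)$ by means of Proposition~\ref{P:inducing} in order to recover $\M_1$. First I would check that both cotorsion pairs are complete, hereditary and generated by a set: Proposition~\ref{P:description-tilde}~(3) provides $(dg\A,\tilde\B)$, while Proposition~\ref{P:complete?}~(2), combined with the observation that $dw\B$ is coresolving in $\Ch(\G)$ (because $\B$ is coresolving in $\G$), provides $({}^\perp dw\B, dw\B)$. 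The inclusions ${}^\perp dw\B \subseteq dg\A$ and $\tilde\B \subseteq dw\B$ are immediate, the former since $\tilde\B \subseteq dw\B$ and $dg\A = {}^\perp\tilde\B$.

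The technical heart of the argument is the compatibility identity required by Hovey's correspondence (Theorem~\ref{T:correspondence}):
\[ dg\A\cap\tilde\B \;=\; ({}^\perp dw\B)\cap dw\B. \]
By Proposition~\ref{P:description-tilde}~(4), using $\tilde\B\subseteq\E$, the left-hand side equals $\tilde\A\cap\tilde\B$, whose objects are acyclic complexes with cycles and components in $\A\cap\B$; each short exact sequence of consecutive cycles splits since $\Ext^1_\G(\A\cap\B,\A\cap\B)=0$, so these are precisely the contractible complexes with terms in $\A\cap\B$. For the right-hand side, formula $(\ast)$ applied with $Y = X[-1] \in dw\B$ yields $\Hom_\K(X,X) = \Ext^1_{dw}(X,X[-1]) = 0$, so $X$ is contractible; furthermore, testing against the disks $D^n(B)$ for $B \in \B$ via the standard Gillespie isomorphism (cf.\ \cite[Lemma~3.1]{G3}) forces every component of $X$ into $\A$. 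Hence both sides coincide with the class of contractible complexes with terms in $\A\cap\B$.

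With compatibility established, Gillespie's glueing theorem produces the thick class $\V$ and the Hovey triple $(dg\A,\V,dw\B)$ satisfying $\V\cap dw\B = \tilde\B$ and $\V\cap dg\A = {}^\perp dw\B$; cofibrant generation follows from the small object argument applied to the generating sets of both cotorsion pairs furnished by Proposition~\ref{P:complete?}. The restriction to $\Ch(\B)$ is obtained via Proposition~\ref{P:inducing} applied to the coresolving full subcategory $\Ch(\B)\subseteq\Ch(\G)$: the cotorsion pair $(dg\A,\tilde\B)$ restricts to $(dg\A\cap dw\B,\tilde\B)$, the fibrant class $dw\B$ already sits inside $\Ch(\B)$, and the trivial class restricts to $\tilde\B$, yielding exactly $\M_1$ of Theorem~\ref{T:derived-B}. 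Finally, when $(\A,\B)$ is generated by a set of finitely presented objects of $\G$, the generating sets of both cotorsion pairs in $\Ch(\G)$ can be chosen among spheres and disks on finitely presented objects (hence compact in $\Ch(\G)$), so the standard criterion from \cite{Hov99} yields a finitely generated model structure whose homotopy category is compactly generated.

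The main obstacle lies precisely in the compatibility identity of the second paragraph: identifying both sides with the contractible complexes having components in $\A\cap\B$ requires combining formula $(\ast)$, Proposition~\ref{P:description-tilde}~(4), and the cycle-splitting argument. Once this identification is in place, the remaining assertions follow by routine invocation of Gillespie's and Hovey's machinery.
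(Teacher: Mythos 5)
Your proposal is correct and follows essentially the same route as the paper: both reduce the statement to Gillespie's gluing theorem for two compatible complete hereditary cotorsion pairs, restrict via Proposition~\ref{P:inducing}, and appeal to the small object argument for (finite) cofibrant generation. The only difference is that you verify the compatibility identity $dg\A\cap\tilde\B=({}^\perp dw\B)\cap dw\B$ explicitly (correctly, via $\tilde\A\cap\tilde\B$ and the contractibility argument from formula $(\ast)$), whereas the paper delegates this, together with smallness, to the cited results of Gillespie.
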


 \begin{proof} By  \cite[Proposition 4.3 and Proposition 4.4]{G4} the cotorsion pairs $({}^\perp{} dw\B, dw\B)$ and $(dg\A, \tilde\B)$ are small and they are hereditary since $(\A, \B)$ is hereditary. The existence of the model structure $\M$ in $ \Ch(\G )$ follows by \cite[Theorem 1.1]{G9}. The fact that the model structure is cofibrantly generated follows by \cite[Section 7.4]{Hov02}.
 The last statement follows also by  \cite[Section 7.4]{Hov02}.
\end{proof}
  Combining Theorem~\ref{T:derived-B} with Theorem~\ref{T:Gill-B} we obtain the following consequence:
 \begin{cor}\label{C:compactly generated} Let $(\A, \B)$ be a hereditary cotorsion pair in $\Modr R$ generated by a set of finitely presented modules. Then the derived category $\D(\B)\cong\Ch(\B)/\tilde\B$ is compactly generated. In particular, if $R$ is a coherent ring and $(\A,\FpInj)$ is the complete cotorsion pair generated by all finitely presented modules, then $\D(\FpInj)$ is compactly generated.

 \end{cor}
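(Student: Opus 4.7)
The plan is to combine Theorem~\ref{T:derived-B} with Theorem~\ref{T:Gill-B}. By Theorem~\ref{T:Gill-B}, since $(\A,\B)$ is generated by a set of finitely presented modules, the model structure $\M = \bigl(dg\A,\, \V,\, dw\B\bigr)$ on $\Ch(R)$ is finitely generated, and by standard Hovey theory (cf.\ \cite[\S7.4]{Hov02}) its homotopy category $\Ho(\M)$ is compactly generated as a triangulated category. The remaining task is to identify $\Ho(\M)$ with $\D(\B) = \Ch(\B)/\tilde\B$.

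For this identification, I would use that $\M_1 = \bigl(dg\A\cap dw\B,\, \tilde\B,\, dw\B\bigr)$ of Theorem~\ref{T:derived-B} is, by Theorem~\ref{T:Gill-B}, exactly the restriction of $\M$ to $\Ch(\B)$: the cofibrations, fibrations and weak equivalences match upon restriction, and the trivial objects coincide since $\V \cap dw\B = \tilde\B$. Moreover, because the fibrant objects of $\M$ lie in $dw\B = \Ch(\B)$, every object of $\Ch(R)$ admits a fibrant replacement inside $\Ch(\B)$. Hence the inclusion $\Ch(\B) \hookrightarrow \Ch(R)$ induces a triangle equivalence $\Ho(\M_1) \simeq \Ho(\M)$, and combined with $\Ho(\M_1) \simeq \D(\B)$ from Theorem~\ref{T:derived-B}, this transports compact generation to $\D(\B)$.

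For the ``in particular'' clause, I would verify that over a coherent ring $R$ the cotorsion pair $(\A,\FpInj)$ generated by a representative set of finitely presented modules is hereditary. Coherence of $R$ is precisely the condition that $\modr R$ is closed under syzygies, so $\FpInj = (\modr R)^{\perp_1}$ coincides with $(\modr R)^{\perp}$ and is therefore coresolving; the cotorsion pair $(\A,\FpInj)$ is hereditary (cf.\ the characterization in Section~\ref{S:cot-pair}), so the first part of the corollary applies and $\D(\FpInj)$ is compactly generated.

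The main delicate point is the transfer of compact generators along the equivalence $\Ho(\M_1) \simeq \Ho(\M)$; I would make this precise by observing that the generating cofibrations of $\M$ from Theorem~\ref{T:Gill-B} are built from the finitely presented generators of the cotorsion pair (yielding bounded complexes of finitely presented modules), and after fibrant replacement inside $\Ch(\B)$ they provide a set of compact generators of $\D(\B)$ via the equivalence.
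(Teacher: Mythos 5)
Your proposal follows exactly the paper's route: the corollary is obtained by combining Theorem~\ref{T:Gill-B} (finitely generated model structure $\M$, hence compactly generated homotopy category) with Theorem~\ref{T:derived-B} (identification of $\Ho(\M_1)\cong\D(\B)$ via the restriction of $\M$ to $\Ch(\B)$), and the coherence hypothesis is used only to guarantee that $(\A,\FpInj)$ is hereditary. The extra details you supply --- fibrant replacement landing in $\Ch(\B)$ and closure of $\modr R$ under syzygies --- are correct fillings-in of steps the paper leaves implicit.
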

\begin{proof} Only the second statement needs a comment. If $R$ is a coherent ring, then the complete cotorsion pair $(\A, \FpInj)$ is hereditary. \end{proof}

 \section{Injective cotorsion pairs in the exact category $\Ch(\B)$}\label{S:B-2}

 We look now for injective cotorsion pairs in $\Ch(\B)$ in order to get localizing cotorsion triples in $\Ch(\B)_{dw}$.

First we state the dual of \cite[Proposition 7.2]{G7}.
\begin{prop}\label{P:prop7.2} Let $\G$ be a Grothendieck category and let $(\W, \I)$ be an injective cotorsion pair in $\Ch(\G)$ with $\I\subseteq dw \Inj$ and let $(\A, \B)$ be a complete hereditary cotorsion pair in $\G$. Then \[\Big(\W\cap dw \B, \I\Big)\] is an injective cotorsion pair in $\Ch(\B)$ and \[\Big(\W\cap dw \B, [\I]_K\Big)\] is a localizing cotorsion pair in the Frobenius category $\Ch(\B)_{dw}$, where a complex $X\in \Ch(\B)$ belongs to $[\I]_K$ if and only if it is chain homotopy equivalent to a complex in $\I$.
\end{prop}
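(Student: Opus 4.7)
The plan is to import the hypotheses on $\I$ from $\Ch(\G)$ down to $\Ch(\B)$ via Proposition~\ref{P:inducing}, mirroring the proof of the preceding Proposition~\ref{P:prop7.3-dual}. First I will observe that $\Ch(\B)$, viewed as a full subcategory of the abelian category $\Ch(\G)$, is extension closed and coresolving: degreewise, any short exact sequence in $\Ch(\G)$ with first two terms in $dw\B$ has third term in $dw\B$, because $\B$ is coresolving in $\G$. Moreover $\Ch(\B)\supseteq\I$, since $\I\subseteq dw\Inj$ and every injective object of $\G$ lies in $\B=\A^{\perp_1}$. With these ingredients in hand, Proposition~\ref{P:inducing} applied to the complete hereditary cotorsion pair $(\W,\I)$ and to the coresolving subcategory $\Ch(\B)$ containing $\I$ delivers the complete hereditary cotorsion pair $(\W\cap dw\B,\ \I)$ in the exact category $\Ch(\B)$.

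To upgrade this to an injective cotorsion pair I will verify that $\W\cap dw\B$ is thick in $\Ch(\B)$ and contains the injectives of $\Ch(\B)$. Closure under direct summands of $\W\cap dw\B$ is immediate from the corresponding property of each factor. For the 2-out-of-3 axiom, a conflation in $\Ch(\B)$ is \emph{a fortiori} a conflation in $\Ch(\G)$ whose three terms already lie in $dw\B$; thickness of $\W$ in $\Ch(\G)$ then forces the remaining term to belong to $\W$, and hence to $\W\cap dw\B$. For the injectives, Fact~\ref{F:proj-inj}(4) identifies the injective objects of $\Ch(\B)$ with those of $\Ch(\G)$; these are contractible complexes with injective components, so they lie in $\W$ (because $(\W,\I)$ is an injective cotorsion pair in $\Ch(\G)$) and in $dw\Inj\subseteq dw\B$ by what was already noted.

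For the localizing cotorsion pair I will invoke Fact~\ref{F:proj-inj}(5), which asserts that $\Ch(\B)_{dw}$, with the degreewise split exact structure, is a WIC Frobenius category whose projective-injective objects are precisely the contractible complexes in $\Ch(\B)$. Applying Theorem~6.3 and Proposition~6.4 of \cite{G7} -- the same device already used in Proposition~\ref{P:prop7.3-dual} -- to the injective cotorsion pair just constructed converts $(\W\cap dw\B,\ \I)$ into the localizing cotorsion pair $(\W\cap dw\B,\ [\I]_K)$ in $\Ch(\B)_{dw}$, with the right class enlarged to its closure under chain homotopy equivalence, as required. The only step meriting real care is the thickness verification in the previous paragraph, in particular making sure that the 2-out-of-3 property in the subcategory $\Ch(\B)$ reduces cleanly to that in $\Ch(\G)$; everything else is routine bookkeeping to confirm that the hypotheses of Proposition~\ref{P:inducing} and of the Gillespie correspondence in \cite{G7} are in place.
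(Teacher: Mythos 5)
Your proof is correct and takes essentially the same route as the paper's, which simply cites Proposition~\ref{P:inducing} for the first statement and \cite[Theorem 6.3]{G7} for the second. You have merely made explicit the routine verifications the paper leaves implicit — that $\Ch(\B)$ is coresolving in $\Ch(\G)$ and contains $\I$, that $\W\cap dw\B$ is thick in $\Ch(\B)$ and contains its injectives (via Fact~\ref{F:proj-inj}) — all of which check out.
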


\begin{proof} The first statement follows by Proposition~\ref{P:inducing}. The second statement follows by \cite[Theorem 6.3]{G7}.
\end{proof}

We exhibit three examples of injective cotorsion pairs in $\Ch(\G)$ satisfying the assumptions of Proposition~\ref{P:prop7.2}.

When needed, we assume some extra conditions on $\G$ like in (2) below.
 \begin{expl}\label{E:examples-inj} Let $(\A, \B)$ be a complete hereditary cotorsion pair in a Grothendieck  category $\G$.
 \begin{enumerate}
\item  By Proposition~\ref{P:complete?}~(2) we have that $({}^\perp{} dw \Inj, dw \Inj)$ is an injective cotorsion pair  in $\Ch(\G)$ (notice that $(\G, \Inj)$ is generated by a set).
 Hence by Proposition~\ref{P:prop7.2} and \cite[Theorem 6.3]{G7} we obtain the injective cotorsion pair in $\Ch(\B)$:
  \[\N_1=\Big({}^\perp{} dw \Inj\cap dw \B, dw \Inj\Big)\] and the localizing cotorsion pair $\Big({}^\perp{} dw \Inj\cap dw \B, [dw \Inj]_K\Big)$ in $\Ch(\B)_{dw}.$
 \item If $\G$ has a generator of finite projective dimension, by the same references as in part (1) $({}^\perp{} ex \Inj, ex \Inj)$ is an injective cotorsion pair  in $\Ch(\G)$ giving rise to the injective cotorsion pair in $\Ch(\B)$:
 \[\N_2=\Big({}^\perp{} ex \Inj\cap dw \B, ex \Inj\Big)\] and to the localizing cotorsion pair $\Big({}^\perp{} ex \Inj\cap dw \B, [ex \Inj]_K\Big)$ in $\Ch(\B)_{dw}.$
 \item  By Proposition~\ref{P:description-tilde}~(3) $(\E, dg\Inj)$ is a complete hereditary cotorsion pair in $\Ch(\G)$; again by the same references as in part (1), in  $\Ch(\B)$ we obtain the injective cotorsion pair
 \[\N_3=\Big(ex\B,dg \Inj\Big)\] and the localizing cotorsion pair $\Big(ex\B, [dg \Inj]_K\Big)$ in $\Ch(\B)_{dw}.$

\end{enumerate}
\end{expl}

 In the above examples we  write $\N_i=(\W_i, \R_i)$, for every $i=1,2,3$. Then, $\R_2\subseteq \W_3$ and
$\W_2\cap\W_3=\W_1$. In fact, by the analogous of \cite[Theorem 4.7]{G4} in a Grothendieck category,  ${}^\perp{} ex \Inj\cap \E={}^\perp{} dw\Inj$. Thus, they satisfy the conditions of  \cite[Theorem 3.4]{G7} and allow to build a recollement which, as we will point out in Remark~\ref{R:inj-recoll}, is nothing else than Krause's recollement~\cite{Kr05}.

\begin{thm}\label{T:recollement-inj} Let $\G$ be a Grothendieck category with a generator of finite projective dimension and let $(\A, \B)$ be a complete hereditary cotorsion pair in $\G$.The three injective cotorsion pairs in Example~\ref{E:examples-inj} satisfy the conditions of \cite[Theorem 3.4]{G7}, so that we get the recollement:

\[
\xymatrix{\dfrac{ex \Inj}{\sim} \ar[rr]^{inc} &&\dfrac{dw\Inj}{\sim} \ar@/^2pc/
[ll]\ar@/_2pc/[ll] \ar[rr]^{Q}
&&{\Ch(\B)/ex\B}\ar@/^2pc/ [ll] \ar@/_2pc/ [ll] }
\]
\vskip 0.7cm

where $\sim$ denotes the homotopy relation associated to the corresponding model structure and coincides with the chain homotopy relation; moreover, $inc$ is the inclusion and $Q$ is the quotient functor.
\end{thm}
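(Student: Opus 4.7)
The plan is to verify the hypotheses of \cite[Theorem 3.4]{G7} for the triple of injective cotorsion pairs $\N_i = (\W_i, \R_i)$ in $\Ch(\B)$ built in Example~\ref{E:examples-inj}, namely $\N_1 = ({}^\perp dw\Inj \cap dw\B,\, dw\Inj)$, $\N_2 = ({}^\perp ex\Inj \cap dw\B,\, ex\Inj)$ and $\N_3 = (ex\B,\, dg\Inj)$. Each of these is already established as an injective cotorsion pair in $\Ch(\B)$ in that Example, so the work reduces to checking the two compatibility conditions that the cited theorem requires of such a triple: the containment $\R_2 \subseteq \W_3$ and the equality $\W_1 = \W_2 \cap \W_3$.

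The first containment is immediate. Since $(\A,\B)$ is a hereditary cotorsion pair with $\A^{\perp_1} = \B$, every injective object of $\G$ lies in $\B$, so $\Inj \subseteq \B$, and passing to complexes gives $\R_2 = ex\Inj \subseteq ex\B = \W_3$.

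For the second condition I will prove both inclusions, the key input being the Grothendieck-category analogue of \cite[Theorem 4.7]{G4} recalled just before the statement, namely the identity ${}^\perp ex\Inj \cap \E = {}^\perp dw\Inj$ in $\Ch(\G)$. For $\W_1 \subseteq \W_2 \cap \W_3$: the inclusion $ex\Inj \subseteq dw\Inj$ yields ${}^\perp dw\Inj \subseteq {}^\perp ex\Inj$, hence $\W_1 \subseteq \W_2$; and the displayed identity gives ${}^\perp dw\Inj \subseteq \E$, whence ${}^\perp dw\Inj \cap dw\B \subseteq dw\B \cap \E = ex\B$, so $\W_1 \subseteq \W_3$. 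For the reverse inclusion, take $X \in \W_2 \cap \W_3 = ({}^\perp ex\Inj \cap dw\B) \cap ex\B$; then $X \in {}^\perp ex\Inj \cap \E$, so by the same identity $X \in {}^\perp dw\Inj$, and since $X \in dw\B$ we conclude $X \in \W_1$.

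With the hypotheses verified, \cite[Theorem 3.4]{G7} produces the recollement directly. The identification of its three terms is then routine: the right-hand term of an injective model structure is the quotient of $\Ch(\B)$ by the class of trivial objects, which here is $\W_3 = ex\B$, giving $\Ch(\B)/ex\B$; the middle and left-hand terms are the homotopy categories of the model structures corresponding to $\N_1$ and $\N_2$, and via the Frobenius description on $\R_i$ with projective-injectives $\R_i \cap \W_i$ (see Remark~\ref{R:proj-recoll} for the dual projective case), these coincide with $dw\Inj/\!\sim$ and $ex\Inj/\!\sim$ respectively, where $\sim$ is chain homotopy. The main potential obstacle is the identity ${}^\perp ex\Inj \cap \E = {}^\perp dw\Inj$, which is the sole non-formal ingredient and must be extracted by repeating Gillespie's argument in the Grothendieck-category setting; once granted, everything else is bookkeeping with the cotorsion-pair structure.
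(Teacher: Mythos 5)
Your proposal is correct and follows essentially the same route as the paper: the paper likewise reduces everything to checking $\R_2\subseteq\W_3$ and $\W_1=\W_2\cap\W_3$ for the triple of Example~\ref{E:examples-inj}, with the Grothendieck-category analogue of \cite[Theorem 4.7]{G4} (i.e.\ ${}^\perp ex\Inj\cap\E={}^\perp dw\Inj$) as the one non-formal ingredient, and then invokes \cite[Theorem 3.4]{G7}. Your write-up simply makes the two inclusions and the identification of the three terms more explicit than the paper does.
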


\begin{rem}\label{R:inj-recoll}
\begin{enumerate}
{\,}
\item Writing $\N_i=(\W_i, \R_i)$, for every $i=1,2,3$, \cite[Proposition 3.2]{G7} implies that $\R_i$ is a Frobenius category with the projective-injective object being exactly  the injective objects in $\Ch(\G)$ or, equivalently, in $\Ch(\B)$. Note that, for every $i=1,2,3$,  $\R_i\cap\W_i$ is the class of injective objects in $\Ch(\B)$. Thus, $\R_1/\sim$ is equivalent to the  homotopy category $K(\Inj)$ of the complexes with injective terms and $\R_2 /\sim$ is equivalent to $K(ex\Inj)$ the full subcategory of $K(\Inj)$ consisting of exact complexes of injectives. Moreover, $\Ch(\B)/ex\B$ is equivalent to the derived category $\D(R)$, as it will be clear from Theorem~\ref{T:triple-B}.

 That is, Theorem~\ref{T:recollement-inj} is yet another instance of Krause's recollement \cite{Kr05}, which was recovered also in \cite{Beck14}.

\item The complexes in ${}^\perp{} dw \Inj$ are called coacyclic in \cite{Pos} (see also \cite{Stopurity} and \cite{Beck14}). By \cite[Proposition 6.9]{Stopurity} the homotopy category of the injective cotorsion pair $\N_1$ is equivalent to $K(\Inj)$ and called the coderived category of $\G$. Thus the central term of the above recollement is equivalent to the coderived category of $\G$.
\end{enumerate}
\end{rem}
Combining some of the previous examples we can state the following:
\begin{thm}\label{T:triple-B}  Let $(\A, \B)$ be a complete hereditary cotorsion pair in a  Grothendieck category $\G$ such that $\A$ contains a generator of finite projective dimension.
The triple $\Big([{}^\perp{} ex\B\cap dw \B]_K, ex\B, [dg \Inj]_K\Big)$ is a localizing cotorsion triple in $\Ch(\B)_{dw}$. If  $\X=[{}^\perp{} ex\B\cap dw \B]_K$, $\Y=ex\B$, $\Z=[dg \Inj]_K$ then there are equivalences of triangulated categories:
\[\frac{[{}^\perp{} ex\B\cap dw \B]_K}{\sim} \cong \frac{\Ch(\B)}{ex\B} \cong  \frac{[dg \Inj]_K}{\sim}\]
where $\sim$ is the chain homotopy equivalence
and a recollement:
\vskip0.7cm
\[
\xymatrix{\dfrac{ex\B}{\sim} \ar[rr]^{inc} &&{K(\B)} \ar@/^2pc/[ll]\ar@/_2pc/[ll] \ar[rr]^{Q}
&&{\dfrac{\Ch(\B)}{ex\B }}\ar@/^2pc/ [ll] \ar@/_2pc/ [ll] }
\]
\vskip0.7cm
where the middle term is  the homotopy category $K(\B)$ of the complexes with terms in $\B$ modulo the chain homotopy equivalence.

\end{thm}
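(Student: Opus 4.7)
The plan is to recognize this theorem as an instance of the general recollement machinery for Frobenius categories equipped with a localizing cotorsion triple, applied to $\Ch(\B)_{dw}$ using pieces already assembled in Sections~\ref{S:B} and~\ref{S:B-2}.

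First I would assemble the localizing cotorsion triple. By Example~\ref{E:examples-proj}~(3), which uses precisely the hypothesis that $\A$ contains a generator of finite projective dimension, $\M_3=({}^\perp ex\B\cap dw\B,\ ex\B)$ is a projective cotorsion pair in $\Ch(\B)$, and its Frobenius version in $\Ch(\B)_{dw}$ is the localizing cotorsion pair $([{}^\perp ex\B\cap dw\B]_K,\ ex\B)$. Similarly, by Example~\ref{E:examples-inj}~(3), the injective cotorsion pair $\N_3=(ex\B,\ dg\Inj)$ in $\Ch(\B)$ yields the localizing cotorsion pair $(ex\B,\ [dg\Inj]_K)$ in $\Ch(\B)_{dw}$. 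Since $ex\B$ is thick in $\Ch(\B)$ (Remark~\ref{R:thick-1}), and since in a Frobenius category the notions of injective and projective complete cotorsion pair coincide (Remark~\ref{R:inj-proj-model}), these two pairs glue into a localizing cotorsion triple $\bigl([{}^\perp ex\B\cap dw\B]_K,\ ex\B,\ [dg\Inj]_K\bigr)$ in the Frobenius category $\Ch(\B)_{dw}$.

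Next I would invoke the recollement theorem \cite[Theorem 3.4]{G7} for Frobenius categories endowed with a localizing cotorsion triple. It produces automatically a recollement whose middle term is the stable category $\Ch(\B)_{dw}/\sim = K(\B)$, whose left term is $ex\B/\sim$ (the stable category of the common middle class), and whose right term is the Verdier quotient of $K(\B)$ by $ex\B/\sim$. The two semi-orthogonal decompositions coming from the two localizing pairs in the triple give the triangle equivalences
\[\frac{[{}^\perp ex\B\cap dw\B]_K}{\sim}\ \simeq\ \frac{K(\B)}{ex\B/\sim}\ \simeq\ \frac{[dg\Inj]_K}{\sim}.\]
Identifying $K(\B)/(ex\B/\sim)$ with $\Ch(\B)/ex\B$, that is with the homotopy category of the exact model structure associated to either $\M_3$ or $\N_3$ (for which $ex\B$ is the class of trivial objects), completes both the displayed chain of equivalences and the recollement in the statement.

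The main obstacle I anticipate is the bookkeeping required to identify the Verdier quotient $K(\B)/(ex\B/\sim)$ delivered by the general theory with the quotient $\Ch(\B)/ex\B$ appearing in the theorem. This reduces to checking that the weak equivalences of the associated exact model structure (namely the maps whose cone lies in $ex\B$) are precisely the morphisms inverted in the Verdier quotient, which is a standard consequence of Hovey's correspondence (Theorem~\ref{T:correspondence}) and the dual of \cite[Proposition 3.2]{G7} already invoked in Remark~\ref{R:proj-recoll}.
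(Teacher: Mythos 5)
Your proposal is correct and follows essentially the same route as the paper: it builds the localizing cotorsion triple in $\Ch(\B)_{dw}$ from the two localizing cotorsion pairs of Example~\ref{E:examples-proj}~(3) and Example~\ref{E:examples-inj}~(3), identifies the stable category of $\Ch(\B)_{dw}$ with $K(\B)$, and then invokes Gillespie's recollement machinery (the paper phrases this via the three localizing pairs $\clL_1,\clL_2,\clL_3$ and \cite[Corollary 4.5]{G7}, while you cite \cite[Theorem 3.4]{G7} directly, but the content is the same). The identification of the right-hand Verdier quotient with $\Ch(\B)/ex\B$ that you flag as the remaining bookkeeping is exactly what the cited results of \cite{G7} supply.
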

\begin{proof} By Example~\ref{E:examples-proj}~(3)  and Example~\ref{E:examples-inj}~(3), we have two localizing cotorsion pairs $\Big([{}^\perp{} ex\B\cap dw \B]_K, ex\B\Big)$ and $\Big(ex\B,  [dg \Inj]_K\Big)$ in $\Ch(\B)_{dw}$. Let $\X=[{}^\perp{} ex\B\cap dw \B]_K$, $\Y=ex\B$ and $\Z=[dg \Inj]_K$, then $(\X, \Y, \Z)$ is a localizing cotorsion triple in $\Ch(\B)_{dw}$. Moreover, we can consider also the localizing cotorsion pair $(\Ch(\B)_{dw}, \W)$ where $\W$ is the class of projective-injective (contractible) objects in $\Ch(\B)_{dw}$ so that we have three localizing cotorsion pairs in $\Ch(\B)_{dw}$:
\[\clL_1=(\Ch(\B)_{dw}, \W), \clL_2=(\X, \Y), \clL_3=(\Y, \Z)\]
which satisfy the conditions in Theorem~\ref{T:recollement-proj} hence they give rise to a recollement where the central term is the stable category $\Ch(\B)_{dw}/\sim$. Since a map in $\Ch(\B)$ is null homotopic if and only if it factors through a contractible complex, we have that the stable category of $\Ch(\B)_{dw}$ is  the homotopy category $K(\B)$ of the complexes with terms in $\B$ modulo the chain homotopy equivalence.

Then the conclusion follows by  \cite[Corollary 4.5]{G7} .
\end{proof}

\begin{rem}\label{R:derived-of-R} From the equivalence $\dfrac{\Ch(\B)}{ex\B} \cong \dfrac{ [dg \Inj]_K}{\sim}$ we see that  $\dfrac{\Ch(\B)}{ex\B}$  is equivalent to the usual derived category $\D(\G)$.
\end{rem}

\section{When is $\tilde{\B}$ the central term of a localizing cotorsion triple in $\Ch(\B)_{dw}$?}\label{S:tildeB}

In Example~\ref{E:examples-inj}~(3) we have shown that there is an injective cotorsion pair $\Ch(\B)$ with $ex\B$
 as left term and Example~\ref{E:examples-proj}~(1) provides a projective cotorsion pair in $\Ch(\B)$ with right component $ \widetilde{\B}$.

 Our aim will be to find cotorsion pairs $(\A, \B)$ for which there exist an injective cotorsion pair $(\widetilde{\B}, \R)$ in $\Ch(\B)$ with $
\R\subseteq dw\Inj$ in order to obtain a localizing cotorsion triple in $\Ch(\B)_{dw}$ with $\widetilde{\B}$ as central term.

 Section~\ref{S:2} provides examples of cotorsion pairs $(\A, \B)$ such that $ex\B=\widetilde{\B}$.

A first case appears in Proposition~\ref{P:finite-proj-dim}.
\begin{prop}\label{P:new-triple}
Let $(\A, \B)$ be a complete hereditary cotorsion pair in $\G$ with $\A\subseteq \clP$ ($\clP$ the class of objects with finite projective dimension). Then $ex\B=\tilde\B$, hence $(\tilde\B, dg\Inj)$ is an injective cotorsion pair in $\Ch(\B)$ and there is a recollement as in Theorem~\ref{T:triple-B} with $ex \B$ replaced by $\tilde\B$.

In particular, the derived category $\D(\B)$ of $\B$ is equivalent to the usual derived category of $\G$.
\end{prop}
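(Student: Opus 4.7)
The plan is to assemble the statement directly from the ingredients developed earlier in the paper, with essentially no new work. First I would invoke Proposition~\ref{P:finite-proj-dim}(2): under the hypothesis $\A\subseteq \clP$ every acyclic complex with terms in $\B$ automatically lies in $\tilde{\B}$. Combined with the trivial inclusion $\tilde{\B}\subseteq ex\B$, this gives $ex\B=\tilde{\B}$.

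Next, I would feed this equality into Example~\ref{E:examples-inj}(3), which provides the injective cotorsion pair $(ex\B, dg\Inj)$ in $\Ch(\B)$; after substitution this reads $(\tilde{\B}, dg\Inj)$, with associated localizing cotorsion pair $(\tilde{\B}, [dg\Inj]_K)$ in $\Ch(\B)_{dw}$. In particular the inclusion $dg\Inj\subseteq dw\Inj$ required by Proposition~\ref{P:prop7.2} is preserved.

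For the recollement I would verify the hypothesis of Theorem~\ref{T:triple-B}, namely that $\A$ contains a generator of finite projective dimension. This is automatic: picking any generator $G$ of $\G$ and a special $\A$-precover $0\to B\to A\to G\to 0$ (which exists because $(\A,\B)$ is complete), the object $A$ is a generator of $\G$ and belongs to $\A\subseteq \clP$, so it has finite projective dimension. Theorem~\ref{T:triple-B} then applies and, substituting $ex\B=\tilde{\B}$ in each occurrence, yields the announced recollement
\[
\xymatrix{\dfrac{\tilde{\B}}{\sim} \ar[rr]^{inc} &&{K(\B)} \ar@/^2pc/[ll]\ar@/_2pc/[ll] \ar[rr]^{Q}
&&{\dfrac{\Ch(\B)}{\tilde{\B}}}\ar@/^2pc/ [ll] \ar@/_2pc/ [ll] }
\]
with middle term the chain homotopy category of $\B$.

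For the last assertion I would recall that, by Neeman~\cite{Nee90}, the derived category of the idempotent complete exact category $\B$ is by definition $\D(\B)=\Ch(\B)/\tilde{\B}$. Combining the equality $ex\B=\tilde{\B}$ with Remark~\ref{R:derived-of-R}, which identifies $\Ch(\B)/ex\B$ with the usual derived category $\D(\G)$, we obtain the triangle equivalence $\D(\B)\cong \D(\G)$. The only step requiring any thought is the verification that $\A$ contains a generator of finite projective dimension; the rest of the argument is pure bookkeeping that transfers the substitution $ex\B\leadsto\tilde{\B}$ through the statements of Section~\ref{S:B-2}.
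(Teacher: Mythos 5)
Your proposal is correct and follows essentially the same route the paper intends (the paper gives no separate proof, treating the proposition as an immediate assembly of Proposition~\ref{P:finite-proj-dim}(2), Example~\ref{E:examples-inj}(3), Theorem~\ref{T:triple-B} and Remark~\ref{R:derived-of-R}). Your explicit check that $\A$ contains a generator of finite projective dimension, via a special $\A$-precover of a generator, correctly fills in the one hypothesis of Theorem~\ref{T:triple-B} that the paper leaves implicit.
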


\begin{cor}\label{C:recoll-tilting} Let $(\A, \T)$ be an $n$-tilting cotorsion pair in $\Modr R$. For the tilting class $\T$ we have a recollement:
\vskip0.7cm
\[
\xymatrix{\dfrac{\widetilde{\T}}{\sim} \ar[rr]^{inc} &&{K(\T)} \ar@/^2pc/[ll]\ar@/_2pc/[ll] \ar[rr]^{Q}
&&{\dfrac{\Ch(\T)}{\widetilde{\T} }}\ar@/^2pc/ [ll] \ar@/_2pc/ [ll] }
.\]
\vskip0.7cm
\end{cor}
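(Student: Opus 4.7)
The plan is to verify that an $n$-tilting cotorsion pair $(\A,\T)$ satisfies the hypotheses of Proposition~\ref{P:new-triple}, and then quote that proposition directly. Three things need to be checked: that $(\A,\T)$ is complete hereditary, that $\A\subseteq\clP$, and that $\A$ contains a generator of $\Modr R$ of finite projective dimension (this last one is needed because Proposition~\ref{P:new-triple} invokes Theorem~\ref{T:triple-B}, which requires it).

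First I would recall that every $n$-tilting cotorsion pair in $\Modr R$ is complete and hereditary by the standard theory of tilting modules. By \cite[Lemma 13.10]{GT12} (the same reference used in the proof of Proposition~\ref{P:tilt-cotil}) we have $\A\subseteq\clP_n$, so in particular $\A\subseteq\clP$. Moreover, the free module $R$ is a generator of the Grothendieck category $\Modr R$ of projective dimension zero, and $R\in\A$ since $\A={^{\perp_1}\T}$ must contain all projectives. Hence $\A$ contains a generator of finite projective dimension.

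Having verified these hypotheses, Proposition~\ref{P:new-triple} applies and gives the equality $ex\T=\widetilde{\T}$ together with the recollement from Theorem~\ref{T:triple-B} with $ex\T$ replaced by $\widetilde{\T}$, which is precisely the claimed recollement. As an alternative, one could simply combine Proposition~\ref{P:tilt-cotil} (which already established $ex\T=\widetilde{\T}$ for tilting cotorsion pairs via the same $\A\subseteq\clP_n$ argument) with Theorem~\ref{T:triple-B} directly; both routes lead to the same conclusion.

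There is no genuine obstacle in the proof: the corollary is a direct specialization of Proposition~\ref{P:new-triple}. The only minor point worth stating explicitly is why $\A$ contains a generator of finite projective dimension, since Theorem~\ref{T:triple-B} includes this assumption; but as noted above this is immediate from $R\in\A$.
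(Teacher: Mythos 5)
Your proposal is correct and follows essentially the same route as the paper: the paper's proof cites Proposition~\ref{P:tilt-cotil} (itself resting on $\A\subseteq\clP_n$ via \cite[Lemma 13.10]{GT12}) to get $ex\T=\widetilde{\T}$ and then invokes Proposition~\ref{P:new-triple}, which is exactly the combination you describe. Your explicit check that $R\in\A$ supplies the generator of finite projective dimension required by Theorem~\ref{T:triple-B} is a worthwhile detail the paper leaves implicit.
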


\begin{proof} By Proposition~\ref{P:tilt-cotil} $ex\T=\widetilde{\T}$, hence the conclusion follows by Proposition~\ref{P:new-triple}.
\end{proof}

 \begin{prop}\label{P:ex-Cot} The cotorsion pair $(\Flat, \Cot)$ in $\Modr R$ satisfies $ex\Cot=\widetilde{\Cot}$,  hence it induces a recollement:
\vskip0.7cm
\[
\xymatrix{\dfrac{\widetilde{\Cot}}{\sim} \ar[rr]^{inc} &&{K(\Cot)} \ar@/^2pc/[ll]\ar@/_2pc/[ll] \ar[rr]^{Q}
&&{\dfrac{\Ch(\Cot)}{\widetilde{\Cot} }}\ar@/^2pc/ [ll] \ar@/_2pc/ [ll] }
.\]
\vskip0.7cm
\end{prop}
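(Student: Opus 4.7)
\emph{Plan.} The plan is to first establish $ex\Cot=\widetilde{\Cot}$ via the periodicity criterion of Fact~\ref{F:periodic}(2), and then feed the equality into Theorem~\ref{T:triple-B} applied to the cotorsion pair $(\Flat,\Cot)$.

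\emph{Step 1: reduction to periodicity.} The pair $(\Flat,\Cot)$ is a complete hereditary cotorsion pair in $\Modr R$ by the Bican--El Bashir--Enochs theorem together with the classical fact that $\Flat$ is resolving. The class $\Cot=\Flat^{\perp_1}$ is closed under direct products and under direct summands, so Fact~\ref{F:periodic}(2) applied with $\C=\D=\Cot$ reduces the identity $ex\Cot=\widetilde{\Cot}$ to the assertion that every $\Cot$-periodic module is cotorsion.

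\emph{Step 2: $\Cot$-periodicity implies cotorsion.} Let $M$ be $\Cot$-periodic, via a short exact sequence $0\to M\to C\to M\to 0$ with $C\in\Cot$. I would follow the template of Proposition~\ref{P:ex-B-infty}: trivially ${}^\perp M\supseteq \clP_0(\modr R)$, the class of finitely generated projective modules. By Lazard's theorem $\Flat=\varinjlim \clP_0(\modr R)$, and $\Flat$ is classically closed under pure epimorphic images. Hence the pure-closure/direct-limit argument of \cite[Theorem~3.7]{BCIE} carries over verbatim (with the smaller class $\Flat$ replacing $\A^{\infty}$) to give ${}^\perp M\supseteq \Flat$, i.e.\ $M\in \Flat^{\perp_1}=\Cot$.

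\emph{Step 3: building the recollement.} Since $R\in\Flat$ is a generator of $\Modr R$ of projective dimension $0$, the hypothesis of Theorem~\ref{T:triple-B} is verified for the pair $(\Flat,\Cot)$. That theorem produces a recollement with left term $ex\Cot/{\sim}$, middle term $K(\Cot)$, and right term $\Ch(\Cot)/ex\Cot$. Substituting $ex\Cot=\widetilde{\Cot}$ everywhere yields exactly the recollement in the statement. The only subtle point is the pure-closure argument in Step~2, but since the mechanism is already deployed in Proposition~\ref{P:ex-B-infty}, it reduces to checking that the hypotheses of \cite[Theorem~3.7]{BCIE} are met for $\Flat$, which is standard.
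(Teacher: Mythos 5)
Your proof is correct and takes essentially the same route as the paper: the paper's own proof consists of citing \cite[Theorem 4.1(2)]{BCIE} for the equality $ex\Cot=\widetilde{\Cot}$ and then applying Theorem~\ref{T:triple-B}, which is exactly your Step~3 (the hypothesis being satisfied since $R$ is a projective generator lying in $\Flat$). Your Steps~1--2 merely unpack that citation, re-deriving $ex\Cot=\widetilde{\Cot}$ from the periodicity criterion of Fact~\ref{F:periodic}(2) and \cite[Theorem 3.7]{BCIE} applied with $\clS=\clP_0(\modr R)$ and $\varinjlim\clS=\Flat$ — the same template as Proposition~\ref{P:ex-B-infty}, and essentially how the cited theorem is proved in \cite{BCIE} itself.
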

\begin{proof} The fact that $ex\Cot=\widetilde{\Cot}$ in $\Ch(R)$ is proved in \cite[Theorem 4.1~(2)]{BCIE}.
  Hence, the conclusion follows  by Theorem~\ref{T:triple-B}.\end{proof}

 \begin{prop}\label{P:B-infty} The cotorsion pair $(\A^{\infty}, \B_{\infty})$ from Notation~
 \ref{N:notations}~(5) satisfies $ex\B_{\infty}=\widetilde{\B_{\infty}}$,  hence it induces a recollement:
 \vskip0.7cm
\[
\xymatrix{\dfrac{\widetilde{\B_{\infty}}}{\sim} \ar[rr]^{inc} &&{K(\B_{\infty})} \ar@/^2pc/[ll]\ar@/_2pc/[ll] \ar[rr]^{Q}
&&{\dfrac{\Ch(\B_{\infty})}{\widetilde{\B_{\infty}} }}\ar@/^2pc/ [ll] \ar@/_2pc/ [ll] }
.\]
\vskip0.7cm
\end{prop}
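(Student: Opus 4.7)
The plan is to deduce the statement by combining the prior Proposition~\ref{P:ex-B-infty} with Theorem~\ref{T:triple-B}. The equality $ex\B_{\infty}=\widetilde{\B_{\infty}}$ has already been established in Proposition~\ref{P:ex-B-infty}, so the only remaining task is to verify that the hypotheses of Theorem~\ref{T:triple-B} are satisfied for the cotorsion pair $(\A^{\infty},\B_{\infty})$.

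First I would record that $(\A^{\infty},\B_{\infty})$ is a complete hereditary cotorsion pair in $\Modr R$: completeness and the hereditary property are precisely the content of Notation~\ref{N:notations}(5), where it was explained that the closure of $(\A^{\omega},\B_{\omega})$ is cogenerated by pure injective modules of $\B_{\omega}$ and that cosyzygies of pure injectives in $\B_{\omega}$ stay in $\B_{\omega}$. Since $\Modr R$ is a Grothendieck category, the ambient setting of Theorem~\ref{T:triple-B} is met.

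Next I would check that $\A^{\infty}$ contains a generator of finite projective dimension. The ring $R$ is a projective generator of $\Modr R$, and $R\in \clP_0(\modr R)\subseteq \clP(\modr R)\subseteq \varinjlim \clP(\modr R)=\A^{\infty}$. Thus $\A^{\infty}$ contains a generator of projective dimension zero, which verifies the remaining hypothesis of Theorem~\ref{T:triple-B}.

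With all hypotheses in place, Theorem~\ref{T:triple-B} yields the recollement
\[
\xymatrix{\dfrac{ex\B_{\infty}}{\sim} \ar[rr]^{inc} &&{K(\B_{\infty})} \ar@/^2pc/[ll]\ar@/_2pc/[ll] \ar[rr]^{Q}
&&{\dfrac{\Ch(\B_{\infty})}{ex\B_{\infty} }}\ar@/^2pc/ [ll] \ar@/_2pc/ [ll] .}
\]
Substituting $\widetilde{\B_{\infty}}$ for $ex\B_{\infty}$ via Proposition~\ref{P:ex-B-infty} gives the stated recollement. There is no genuine obstacle here; the work has been done in Proposition~\ref{P:ex-B-infty} (where the periodicity argument from \cite{BCIE} was applied), and this proposition merely packages that equality together with the general recollement machinery of Theorem~\ref{T:triple-B}.
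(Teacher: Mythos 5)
Your proposal is correct and follows exactly the paper's own (very short) argument: the equality $ex\B_{\infty}=\widetilde{\B_{\infty}}$ is quoted from Proposition~\ref{P:ex-B-infty} and the recollement then follows from Theorem~\ref{T:triple-B}. Your additional verification that $(\A^{\infty},\B_{\infty})$ is complete hereditary and that $R\in\A^{\infty}$ is a projective generator is accurate and merely makes explicit what the paper leaves implicit.
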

\begin{proof} By Proposition~\ref{P:ex-B-infty} we have $ex\B_{\infty}=\widetilde{\B_{\infty}}$, hence the conclusion follows again by Theorem~\ref{T:triple-B}.\end{proof}

In view of Lemma~\ref{L:Cot-inj} we have the following characterization.

\begin{prop}\label{P:no-tilde} Let $(\A, \B)$ be a complete hereditary cotorsion pair in $\Modr R$ with $\B\supseteq  \B_{\infty}$.

Then in $\Ch(\B)$ there exists an injective cotorsion pair $(\widetilde{\B}, \R)$ with $
\R\subseteq dw\Inj$ \iff $ex\B=\tilde\B$.\end{prop}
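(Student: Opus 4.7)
The plan is to prove the two directions separately. The converse direction ($\Leftarrow$) is essentially free from the machinery already developed: if $ex\B=\widetilde{\B}$, then Example~\ref{E:examples-inj}(3) states that $(ex\B, dg\Inj)$ is an injective cotorsion pair in $\Ch(\B)$, so substituting gives $(\widetilde{\B}, dg\Inj)$ as an injective cotorsion pair in $\Ch(\B)$, and trivially $dg\Inj\subseteq dw\Inj$. (For this direction we do not even need the hypothesis $\B\supseteq \B_{\infty}$.)

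For the forward direction ($\Rightarrow$), I would start by noting that the inclusion $\widetilde{\B}\subseteq ex\B$ is automatic, so only the reverse inclusion needs justification. Take $Y\in ex\B$. Using that $(\widetilde{\B},\R)$ is a complete cotorsion pair in $\Ch(\B)$, produce a short exact sequence in $\Ch(\B)$ of the form
\[
0\to Y\to R\to T\to 0,
\]
with $R\in\R$ and $T\in\widetilde{\B}$. This sequence is exact degreewise since $\Ch(\B)$ carries the degreewise exact structure.

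The key step is to observe that $R$ is acyclic and then identify it as an element of $\widetilde{\B}$. Acyclicity follows from the long exact sequence: both $Y$ and $T$ are acyclic, so $R$ is acyclic too. Since $\R\subseteq dw\Inj$, the complex $R$ is then an acyclic complex of injective modules, and by Corollary~\ref{C:exInj} every cycle of such a complex lies in $\B_{\infty}\subseteq \B$; hence $R\in \widetilde{\B_{\infty}}\subseteq\widetilde{\B}$. This is exactly the place where the hypothesis $\B\supseteq\B_{\infty}$ enters.

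To finish, invoke that $\widetilde{\B}$ is thick in $\Ch(\B)$, which is forced by the very definition of \emph{injective cotorsion pair} (the left-hand class must be thick). The short exact sequence $0\to Y\to R\to T\to 0$ with $R,T\in\widetilde{\B}$ and the $2$-out-of-$3$ property of the thick class $\widetilde{\B}$ then give $Y\in\widetilde{\B}$, completing the inclusion $ex\B\subseteq\widetilde{\B}$. I expect the only delicate point to be the verification that $R$ is acyclic with cycles in $\B$, but this is handled cleanly once one remembers that sequences in the exact structure on $\Ch(\B)$ are degreewise exact and that Corollary~\ref{C:exInj} is exactly tailored to pass from acyclicity with injective components to membership in $\widetilde{\B_{\infty}}$.
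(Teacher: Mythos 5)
Your proof is correct. The converse direction is identical to the paper's. For the forward direction you take a genuinely more hands-on route: the paper instead observes that $\R=\widetilde{\B}{}^\perp{}\cap dw\B$ and invokes Lemma~\ref{L:Cot-inj}~(3) and~(4) to identify $\R=dg\Inj$ and ${}^\perp{}\R\cap dw\B=ex\B$, whence $\widetilde{\B}=ex\B$ by orthogonality alone. You bypass Lemma~\ref{L:Cot-inj} entirely and argue directly on an object $Y\in ex\B$: the special $\R$-preenvelope $0\to Y\to R\to T\to 0$ from completeness of $(\widetilde{\B},\R)$, acyclicity of $R$ from the long exact homology sequence, Corollary~\ref{C:exInj} to place $R$ in $\widetilde{\B_\infty}\subseteq\widetilde{\B}$ (this is where $\B\supseteq\B_\infty$ and $\R\subseteq dw\Inj$ enter), and thickness of $\widetilde{\B}$ to conclude $Y\in\widetilde{\B}$. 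All steps check out: the conflation is degreewise exact, $\widetilde{\B}$ is thick in $\Ch(\B)$ (both by the definition of injective cotorsion pair and by Remark~\ref{R:thick-1}), and Corollary~\ref{C:exInj} applies since $R$ has injective components. The two arguments ultimately rest on the same key fact (Corollary~\ref{C:exInj}, hidden inside the proof of Lemma~\ref{L:Cot-inj}~(3) in the paper's version); what your version buys is independence from the precise identification $\R=dg\Inj$, at the cost of not recovering that extra information, which the paper's route gives for free.
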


\begin{proof}
Assume that in $\Ch(\B)$ there is an injective cotorsion pair $(\tilde \B, \R) $ with $
\R\subseteq dw\Inj$.  This means that $\R=\tilde\B{}^\perp{} \cap dw \B$. By Lemma~\ref{L:Cot-inj}~(3) and (4), $\R= dg\Inj$ and ${}^\perp{} \R\cap dw\B=ex\B$. Then, $ex\B=\tilde\B$.

Conversely, if $ex\B=\tilde\B$ then $(\tilde\B, dg\Inj)$ is an injective cotorsion pair in $\Ch(\B)$ by Example~\ref{E:examples-inj}~(3).
\end{proof}
\begin{rem}\label{R:many-B} Note that complete hereditary cotorsion pairs $(\A, \B)$ satisfying $\B\supseteq  \B_{\infty}$ may be abundant, since  $ \B_{\infty}$ may be rather small.

For instance, if the little finitistic dimension of $R$  is finite (e.g. $R$ is  semihereditary), then $ \B_{\infty}$ coincides with the class of injective modules (see Proposition~\ref{P:semihereditary}).
\end{rem}

A positive answer to the question in the title of this section is provided by \sto\  in \cite{Stopurity} for  the cotorsion pair $(\A, \FpInj)$ generated by the class of finitely presented modules over a coherent ring $R$.
In view of Example~\ref{E:examples-proj}~(1) and Example~\ref{E:examples-inj}~(1), we restate \Sto's theorem in our notations.
 \begin{prop}\label{P:fp-inj}(\cite[Proposition 6.11, Theorem 6.12]{Stopurity})  Let $R$ be a coherent ring and let $(\A, \FpInj)$ be the complete hereditary cotorsion pair generated by the class of finitely presented modules. Then:
 \[{}^\perp{} dw\Inj\cap dw \FpInj=\widetilde{\FpInj}\]
  hence $\Big([dg\A\cap dw\FpInj]_K, \widetilde{\FpInj}, [dw \Inj]_K\Big)$ is a localizing cotorsion triple in $\Ch(\FpInj)_{dw}$. There are equivalences:
\[\frac{[dg\A\cap dw\FpInj]_K}{\sim} \cong \frac{\Ch(\FpInj)}{ \widetilde{\FpInj}} \cong  \frac{[dw \Inj]_K}{\sim}\]
where $\sim$ is the chain homotopy equivalence
and a recollement:
\vskip0.7cm
\[
\xymatrix{\dfrac{ \widetilde{\FpInj}}{\sim} \ar[rr]^{inc} &&{K(\FpInj)} \ar@/^2pc/[ll]\ar@/_2pc/[ll] \ar[rr]^{Q}
&&{\dfrac{\Ch(\FpInj)}{ \widetilde{\FpInj}}\cong \D(\FpInj)}\ar@/^2pc/ [ll] \ar@/_2pc/ [ll] }
.\]
\vskip0.7cm
\end{prop}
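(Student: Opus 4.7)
The plan is to reduce the entire statement to the single equality ${}^\perp dw\Inj \cap dw\FpInj = \widetilde{\FpInj}$ in $\Ch(R)$; once this is established, the localizing triple, the equivalences and the recollement fall out of the cotorsion-pair machinery of Sections~\ref{S:B} and~\ref{S:B-2}. Coherence of $R$ is used just to guarantee that the cotorsion pair $(\A, \FpInj)$ generated by the finitely presented modules is hereditary, because $\FpInj$ is then closed under cokernels of inflations.

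Granting the key equality, the assembly proceeds as follows. Example~\ref{E:examples-proj}(1) applied to $(\A, \FpInj)$ yields the projective cotorsion pair $\M_1 = \big(dg\A \cap dw\FpInj,\; \widetilde{\FpInj}\big)$ in $\Ch(\FpInj)$, while Example~\ref{E:examples-inj}(1) yields the injective cotorsion pair $\N_1 = \big({}^\perp dw\Inj \cap dw\FpInj,\; dw\Inj\big)$ in $\Ch(\FpInj)$. Passing to the Frobenius category $\Ch(\FpInj)_{dw}$ via Propositions~\ref{P:prop7.3-dual} and~\ref{P:prop7.2} produces the two localizing cotorsion pairs $\big([dg\A \cap dw\FpInj]_K,\, \widetilde{\FpInj}\big)$ and $\big(\widetilde{\FpInj},\, [dw\Inj]_K\big)$, where the key equality is what licenses the identification of the right component of the first with the left component of the second. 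Concatenating them gives the claimed localizing cotorsion triple, and then the two triangle equivalences and the recollement follow verbatim by the strategy of Theorem~\ref{T:triple-B} and \cite[Corollary 4.5]{G7}, with $ex\FpInj$ replaced throughout by $\widetilde{\FpInj}$; the identification $\Ch(\FpInj)/\widetilde{\FpInj} \cong \D(\FpInj)$ is simply Neeman's definition of the derived category of an idempotent complete exact category.

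For the key equality itself, the inclusion $\widetilde{\FpInj} \subseteq {}^\perp dw\Inj \cap dw\FpInj$ is the formal half: any extension $0 \to I \to Y \to X \to 0$ with $X \in \widetilde{\FpInj}$ and $I \in dw\Inj$ splits degreewise, so by formula $(\ast)$ its vanishing reduces to checking $\Hom_{\K(R)}(X, I[1]) = 0$, and this is a routine inductive homotopy construction using injectivity of the components $I^n$ together with the short exact sequences $0 \to Z^n(X) \to X^n \to Z^{n+1}(X) \to 0$ whose three terms all lie in $\FpInj$. The reverse inclusion is the deep half: from $dg\Inj \subseteq dw\Inj$ we get ${}^\perp dw\Inj \subseteq {}^\perp dg\Inj = \E$, so any coacyclic complex in $dw\FpInj$ automatically lies in $ex\FpInj$, and what remains is the global identity $ex\FpInj = \widetilde{\FpInj}$. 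By Fact~\ref{F:periodic}(2) this is equivalent to the purely module-theoretic statement that every $\FpInj$-periodic module over a coherent ring is Fp-injective, which is the content of \Sto's periodicity theorem in \cite{Stopurity}.

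The main obstacle is precisely this last ingredient: the reverse inclusion in the key equality is not formal and depends essentially on \Sto's periodicity result for Fp-injective modules over coherent rings. Everything else is combinatorial packaging of results already in place.
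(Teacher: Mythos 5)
Your reduction of the deep inclusion ${}^\perp dw\Inj\cap dw\FpInj\subseteq\widetilde{\FpInj}$ breaks down exactly where you discard coacyclicity and keep only acyclicity: you arrive at the ``global identity'' $ex\FpInj=\widetilde{\FpInj}$, and this identity is false for coherent rings in general. The introduction of the paper singles out $(\A,\FpInj)$ over a coherent ring as the one known \emph{non-degenerate} example, i.e.\ precisely a case where $\widetilde{\B}\subsetneq ex\B$. Concretely, for $R=k[x]/(x^2)$ (Noetherian, hence coherent) one has $\FpInj=\Inj=\Add R$, and the acyclic complex $\cdots\xrightarrow{x}R\xrightarrow{x}R\xrightarrow{x}\cdots$ lies in $ex\FpInj$ while its cycles are isomorphic to $k$, which is not injective; equivalently, $k$ is $\FpInj$-periodic but not Fp-injective, so the ``periodicity theorem'' you invoke (every $\FpInj$-periodic module over a coherent ring is Fp-injective) is not \v{S}\v{t}ov\'{\i}\v{c}ek's result and is false --- Fact~\ref{F:periodic}~(1) is the much weaker conditional statement that an \emph{already Fp-injective} $\Inj$-periodic module is injective. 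The whole content of \cite[Proposition 6.11]{Stopurity} is that membership in ${}^\perp dw\Inj$ is strictly stronger than acyclicity and is exactly what forces the cycles into $\FpInj$; any correct argument must use that extra strength (\v{S}\v{t}ov\'{\i}\v{c}ek does so via purity, realizing such complexes as directed colimits of contractible complexes of injectives), and no argument that only remembers $ex\FpInj$ can succeed.

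Two further remarks. First, your ``formal half'' $\widetilde{\FpInj}\subseteq{}^\perp dw\Inj$ is not routine either: the inductive construction of a null-homotopy into an \emph{unbounded} complex of injectives has no starting point (this is precisely why the homotopy category of acyclic complexes of injectives is nonzero in general), and the paper itself records $\tilde\B\subseteq{}^\perp dw\Inj$ as a substantive condition in Lemma~\ref{L:Cot-inj}~(2) rather than a formality; this inclusion too is part of what is being cited. Second, your assembly of the localizing triple, the equivalences and the recollement from the key equality via Examples~\ref{E:examples-proj}~(1) and~\ref{E:examples-inj}~(1) and the strategy of Theorem~\ref{T:triple-B} is correct and coincides with what the paper intends --- the paper gives no proof of the key equality at all, importing it verbatim from \cite[Proposition 6.11, Theorem 6.12]{Stopurity}.
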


We exhibit now another case of cotorsion pairs giving rise to a result analogous to Proposition~\ref{P:fp-inj}

\begin{prop}\label{P:B-inj-n} Let $R$ be a coherent ring and let $(\A, \B)$ be a complete hereditary cotorsion pair in $\Modr R$. Assume that $\B\subseteq \FpInj$ and that $\B\subseteq \I_n$ (the class of modules of injective dimension at most $n$).

 Then, every Fp-injective  $\B$-periodic module belongs to $\B$. Thus
 $\widetilde{\FpInj}\cap dw \B=\tilde{\B}$, $ ^\perp dw\Inj \cap dw\B=\tilde{\B}$ and we have a recollement:
\vskip0.7cm
\[
\xymatrix{\dfrac{ \widetilde{\B}}{\sim} \ar[rr]^{inc} &&{K(\B)} \ar@/^2pc/[ll]\ar@/_2pc/[ll] \ar[rr]^{Q}
&&{\dfrac{\Ch(\B)}{ \widetilde{\B}}\cong \D(\B)}\ar@/^2pc/ [ll] \ar@/_2pc/ [ll] }
.\]
\vskip0.7cm

\end{prop}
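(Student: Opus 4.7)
The plan is to establish the four conclusions in sequence: (i) every Fp-injective $\B$-periodic module belongs to $\B$; (ii) $\widetilde{\FpInj}\cap dw\B=\widetilde{\B}$; (iii) ${}^\perp dw\Inj\cap dw\B=\widetilde{\B}$; (iv) the recollement. Statements (ii)--(iv) will follow formally from (i) together with material already developed in the paper, so the substantive work is the periodicity step (i).

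For (i), let $M$ be Fp-injective and $\B$-periodic via $0\to M\to B\to M\to 0$ with $B\in\B\subseteq\I_n\cap\FpInj$. Since $\Inj\subseteq\B$, it suffices to show $M$ is injective. I would iterate the periodic sequence to represent $M$ as the common cycle of the acyclic complex $X\colon\ \cdots\to B\to B\to B\to\cdots$, whose components all lie in $\I_n$. Using that each $B$ admits an injective coresolution of length at most $n$, a Cartan--Eilenberg style totalization produces an acyclic complex $Y$ of injective modules containing $M$ (up to a direct summand) as a cycle. The constructive version of Fact~\ref{F:periodic}~(2) applied with $\C=\Inj$ then exhibits $M$ as a direct summand of a genuinely $\Inj$-periodic module $N$; since Fp-injectivity is stable under direct sums over a coherent ring, $N$ remains Fp-injective, and Fact~\ref{F:periodic}~(1) forces $N$, hence $M$, to be injective.

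For (ii), the inclusion $\supseteq$ is immediate from $\B\subseteq\FpInj$, while $\subseteq$ is the enhancement of Fact~\ref{F:periodic}~(2) restricted to the summand-closed class $\FpInj$: Fp-injective cycles of acyclic complexes in $\B$ lie in $\B$ iff every Fp-injective $\B$-periodic module does, which is (i). For (iii), \Sto's Proposition~\ref{P:fp-inj} gives $\widetilde{\FpInj}={}^\perp dw\Inj\cap dw\FpInj$; intersecting with $dw\B\subseteq dw\FpInj$ and applying (ii) yields the identity. For (iv), combine the injective cotorsion pair $({}^\perp dw\Inj\cap dw\B,dw\Inj)=(\widetilde{\B},dw\Inj)$ from Example~\ref{E:examples-inj}~(1) with the projective cotorsion pair $(dg\A\cap dw\B,\widetilde{\B})$ from Example~\ref{E:examples-proj}~(1), together with the trivial localizing pair in $\Ch(\B)_{dw}$, and invoke \cite[Corollary~4.5]{G7} exactly as in the proof of Theorem~\ref{T:triple-B}; the middle term becomes $K(\B)$, the left term $\widetilde{\B}/{\sim}$, and the right term $\Ch(\B)/\widetilde{\B}\cong\D(\B)$.

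The main obstacle is (i). A naive dimension-shifting argument combined with Fp-injectivity of $M$ only yields $\Ext^{n+1}(F,M)=0$ for finitely presented $F$, which is insufficient to kill $\Ext^1(A,M)$ against an arbitrary $A\in\A$. The hypothesis $\B\subseteq\I_n$ is exploited essentially to upgrade $\B$-periodicity of $M$ to $\Inj$-periodicity via the Cartan--Eilenberg totalization, after which Fact~\ref{F:periodic}~(1) applies; the technical heart is to ensure that $M$ is faithfully preserved as a cycle-summand of the totalization.
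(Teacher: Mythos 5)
Your steps (ii)--(iv) are routine given (i) and agree with what the paper does, but step (i) rests on a false reduction. You claim it suffices to show that every Fp-injective $\B$-periodic module $M$ is \emph{injective}. That is strictly stronger than the assertion to be proved ($M\in\B$) and is false in general: any $B\in\B$ is Fp-injective (since $\B\subseteq\FpInj$) and is $\B$-periodic via the split sequence $0\to B\to B\oplus B\to B\to 0$, so your claim would force $\B=\Inj$, emptying the proposition of content (e.g.\ it must apply to $\B=\FpInj$ itself when $\FpInj\subseteq\I_n$). Correspondingly, the Cartan--Eilenberg totalization cannot do what you ask of it: totalizing a finite injective coresolution of the contractible complex $\cdots\to B\oplus B\to B\oplus B\to\cdots$ yields a contractible complex of injectives whose cycles are injective, and a non-injective $B$ is not a direct summand of them. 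So the ``technical heart'' you flag --- preserving $M$ as a cycle-summand of the totalization --- is not a gap to be filled but an impossibility.

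The paper's argument runs differently and only ever applies Fact~\ref{F:periodic}~(1) to a \emph{cosyzygy} of $M$, not to $M$. Given $0\to M\to B\to M\to 0$, pick $0\to M\to E\to M_1\to 0$ with $E$ injective; $M_1$ is again Fp-injective by coherence. The horseshoe lemma produces $0\to M_1\to D\to M_1\to 0$ with $D\in\B$ (as $\B$ is coresolving) and $\operatorname{inj.dim}D=\operatorname{inj.dim}B-1$, so by iterating one may assume $\operatorname{inj.dim}B\le 1$, in which case $D$ is injective and Fact~\ref{F:periodic}~(1) gives $M_1$ injective, whence $\operatorname{inj.dim}M\le 1$. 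Then for every $A\in\A$ the long exact sequence of $0\to M\to B\to M\to 0$ gives $0=\Ext^1_R(A,B)\to\Ext^1_R(A,M)\to\Ext^2_R(A,M)=0$, so $M\in\B$; note that $M$ ends up in $\B$ with injective dimension at most $1$, but need not be injective. With (i) corrected this way, your derivations of (ii)--(iv) (Fact~\ref{F:periodic}~(2) for the summand-closed class $\B$, intersection with \Sto's equality ${}^\perp dw\Inj\cap dw\FpInj=\widetilde{\FpInj}$, and the localizing triple argument of Theorem~\ref{T:triple-B}) go through as in the paper.
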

 \begin{proof} Let $(\ast)\quad 0\to M\to B\to M\to 0$ be an exact sequence with $M$ Fp-injective and $B\in \B$. Let $ 0\to M\to E\to M_1\to 0$ be an exact sequence with $E$ injective; then, $M_1$ is Fp-injective. An application of the horseshoe lemma gives the following commutative diagram:
 \[\xymatrix{
&0 \ar[d]  & 0 \ar[d] & 0 \ar[d] \\
0 \ar[r] & M \ar[d] \ar[r] &B\ar[r] \ar[d] & M\ar[d]  \ar[r] & 0 \\
0 \ar[r] & E \ar[d] \ar[r] &E\oplus E\ar[r] \ar[d] & E\ar[d]  \ar[r] & 0  \\
0 \ar[r] & M_1 \ar[d]\ar[r] &D\ar[r] \ar[d] & M_1\ar[d]  \ar[r] & 0 \\
&0 & 0 & 0
}
\]
where  $D\in \B$, since $\B$ is coresolving. We have inj.dim $D=$ inj.dim $B-1$, hence w.l.o.g. we can assume that in our starting sequence  $(\ast)$ $B$ has injective dimension at most $1$. Thus, in the above diagram we have that $D$ is injective and, by Fact~\ref{F:periodic}~(1), we conclude that $M_1$ is injective. The latter implies that inj.dim $M\leq 1$.
Let $A\in \A.$ Then $0=\Ext^1_R(A, B)\to \Ext^1_R(A, M)\to \Ext^2_R(A, M)=0$, hence $M\in \B$ and
$\widetilde{\FpInj}\cap dw \B=\tilde{\B}$  by Fact~\ref{F:periodic}~(2).  Hence, the equality $ ^\perp dw\Inj \cap dw\B=\tilde{\B}$ is obtained by intersecting with $dw\B$ the equality $^\perp{} dw\Inj\cap dw \FpInj=\widetilde{\FpInj}$ from \cite[Proposition 6.11 ]{Stopurity}.

The existence of a recollement as in the statement follows by the same arguments as in the proof of Proposition~\ref{P:fp-inj} applied to the cotorsion pair $(\A, \B)$ in the assumptions.\end{proof}

 \section{Injective cotorsion pairs in the exact category $\Ch(\A)$}

 In this section we state results dual to the ones in Section~\ref{S:B}. Their proofs are obtained  by dual arguments.

Starting with a complete hereditary cotorsion pair $(\A, \B)$ in a Grothendieck  category $\G$, we exhibit three examples of injective cotorsion pairs in $\Ch(\G)$ satisfying the assumptions of Proposition~\ref{P:prop7.3}. Note that the examples below satisfy Proposition~\ref{P:prop7.3-dual} since $\tilde{\A}$, $dg\A$ and $ex\A$  are thick in $\Ch(\A)$ by Lemma~\ref{L:HOM} and they clearly contain the contractible complexes with terms in $\A$.

 \begin{expl}\label{E:exam-A-inj}
 \begin{enumerate}
\item The complete hereditary cotorsion pair $( \widetilde{\A}, dg\B)$ in $\Ch(\G)$ satisfies the conditions in  Proposition~\ref{P:prop7.3}, hence we have the injective cotorsion pair:
\[\Delta_1=\Big(\tilde\A, dg{\B}\cap dw\A\Big)\] in $\Ch(\A)$ and the localizing cotorsion pair $\Big(\tilde\A,  [dg{\B}\cap dw\A]_K\Big)$ in $\Ch(\A)_{dw}.$

\item Also the complete hereditary cotorsion pair $(dg\A, \widetilde{\B})$ in $\Ch(\G)$ satisfies the conditions in  Proposition~\ref{P:prop7.3},  hence we have the injective cotorsion pair:
 \[\Delta_2=\Big(dg\A, \tilde\B\cap dw\A\Big)\] in $\Ch(\A)$ and the localizing cotorsion pair $\Big(dg\A,  [\tilde\B\cap dw\A]_K\Big)$ in $\Ch(\A)_{dw}.$

\item If $\A$ is deconstructible, then by Proposition~\ref{P:complete?}~(6) $(ex\A, ex\A{}^\perp{})$ is a complete hereditary cotorsion pair in $\Ch(\G)$ and it satisfies the conditions in Proposition~\ref{P:prop7.3}, hence we have the injective cotorsion pair
 \[\Delta_3=\Big(ex\A, ex\A{}^\perp{}\cap dw\A\Big)\] in $\Ch(\A)$ and the localizing cotorsion pair $\Big(ex\A,  [ex{}^\perp{}\A\cap dw\B]_K\Big)$ in $\Ch(\A)_{dw}.$

 \end{enumerate}
 \end{expl}
The three injective cotorsion pairs $\Delta_1, \Delta_2, \Delta_3$ of Example~\ref{E:exam-A-inj} satisfy the conditions of \cite[Theorem 3.4]{G7}, hence we have:
\begin{thm}\label{T:recollement-inj-A} Let $(\A, \B)$ be a complete hereditary cotorsion pair in a  Grothendieck category $\G$ such that $\A$ is deconstructible. Then, there is a recollement:

\vskip 0.7cm
\[
\xymatrix{\dfrac{\widetilde{\B}\cap dw \A}{\sim} \ar[rr]^{inc} &&\dfrac{dg{\B}\cap dw \A}{\sim} \ar@/^2pc/
[ll]\ar@/_2pc/[ll] \ar[rr]^{Q}
&&{\Ch(\A)/ex\A}\ar@/^2pc/ [ll] \ar@/_2pc/ [ll] }
\]
\vskip 0.7cm

where $\sim$ denotes the homotopy relation associated to the corresponding model structure and coincides with the chain homotopy relation; moreover, $inc$ is the inclusion and $Q$ is the quotient functor.
\end{thm}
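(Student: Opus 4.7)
The strategy is to run the dual of the argument for Theorem~\ref{T:recollement-inj}: verify that the three injective cotorsion pairs $\Delta_1, \Delta_2, \Delta_3$ in $\Ch(\A)$ from Example~\ref{E:exam-A-inj} satisfy the compatibility conditions of \cite[Theorem 3.4]{G7}, then read off the resulting recollement. The hypothesis that $\A$ is deconstructible is used only to secure, via Proposition~\ref{P:complete?}, that $(ex\A, ex\A^\perp)$ is a complete hereditary cotorsion pair in $\Ch(\G)$, so that Example~\ref{E:exam-A-inj}~(3) genuinely delivers $\Delta_3$; the pairs $\Delta_1$ and $\Delta_2$ are available with no extra assumption.

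Writing $\Delta_i = (\W_i, \R_i)$, the two conditions to be checked are $\R_2 \subseteq \W_3$ and $\W_2 \cap \W_3 = \W_1$, mirroring those verified for the triple $\N_1, \N_2, \N_3$ in the proof of Theorem~\ref{T:recollement-inj}. The first amounts to $\widetilde{\B} \cap dw\A \subseteq ex\A$, which is immediate because $\widetilde{\B}$ is contained in the class $\E$ of acyclic complexes and $ex\A = \E \cap dw\A$. The second amounts to $dg\A \cap ex\A = \widetilde{\A}$; since $dg\A \subseteq dw\A$ by definition, we have $dg\A \cap ex\A = dg\A \cap \E$, and this equals $\widetilde{\A}$ by Proposition~\ref{P:description-tilde}~(4). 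Neither step is substantive; both are direct translations of the corresponding facts used in the proof of Theorem~\ref{T:recollement-inj}.

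With the hypotheses of \cite[Theorem 3.4]{G7} in hand, the resulting recollement has left term $\R_2/\sim = (\widetilde{\B} \cap dw\A)/\sim$, middle term $\R_1/\sim = (dg\B \cap dw\A)/\sim$, and right term $\Ch(\A)/\W_3 = \Ch(\A)/ex\A$. Exactly as in Remark~\ref{R:inj-recoll} for the dual situation, each $\R_i$ carries a Frobenius exact structure whose projective-injective objects are the contractible complexes in $\W_i \cap \R_i$; thus the abstract homotopy relation coincides with chain homotopy, and the stable categories of $\R_1$ and $\R_2$ are identified with the quotients appearing in the statement. The main---indeed, the only---obstacle is the pair of compatibility conditions above, and both follow from a single pass through the definitions together with Proposition~\ref{P:description-tilde}~(4); the remainder is formal, dualising the argument of Theorem~\ref{T:recollement-inj} item by item.
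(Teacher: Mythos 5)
Your proposal is correct and follows essentially the same route as the paper, which simply asserts that $\Delta_1,\Delta_2,\Delta_3$ from Example~\ref{E:exam-A-inj} satisfy the hypotheses of \cite[Theorem 3.4]{G7} and leaves the verification to "dual arguments". Your explicit check of the two compatibility conditions ($\widetilde{\B}\cap dw\A\subseteq ex\A$ and $dg\A\cap ex\A=\widetilde{\A}$ via Proposition~\ref{P:description-tilde}~(4)) and of the role of deconstructibility is exactly the intended dualisation of the argument for Theorem~\ref{T:recollement-inj}.
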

\begin{thm}\label{T:derived-A}  In the setting of Example~\ref{E:exam-A-inj}~(1), $\Delta_1=\Big(dw{\A}, \widetilde{\A}, dg\B\cap dw \A\Big)$ is an exact model structure in the category $\Ch(\A)$.
 In particular, we can define the derived category $\D(\A)$ as the quotient  $\Ch(\A)/\widetilde{\A}$.

 Moreover, we have the following triangle equivalences between the derived category of $\D(\A)$ and the homotopy category of the model structure $\Delta_1$:
 \[\D(\A)\cong\ho(\Delta_1)\cong \dfrac{dg{\B}\cap dw \A}{\widetilde{\A}\cap\widetilde{\B}}.\]
 \end{thm}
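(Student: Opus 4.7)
The plan is to mimic the proof of Theorem~\ref{T:derived-B} in its dual form, exploiting Example~\ref{E:exam-A-inj}(1) and the Hovey--Gillespie correspondence.

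First, I would invoke Example~\ref{E:exam-A-inj}(1), which already establishes that $\Delta_1=(\widetilde{\A},\ dg\B\cap dw\A)$ is an injective cotorsion pair in the exact category $\Ch(\A)$. Since an injective cotorsion pair is tautologically of the form $(\C,\W\cap\R)$ with $\W$ the class of trivial objects and every object cofibrant, Theorem~\ref{T:correspondence} produces an exact model structure on $\Ch(\A)$ whose cofibrant class is all of $dw\A$, whose class of trivial objects is $\widetilde{\A}$, and whose fibrant class is $dg\B\cap dw\A$. This is the model structure $\Delta_1=(dw\A,\widetilde{\A},dg\B\cap dw\A)$ in the statement. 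Observe that $\widetilde{\A}$ is automatically thick in $\Ch(\A)$ by Theorem~\ref{T:correspondence}.

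Next, for Neeman's derived category of the exact category $\A$: by definition $\D(\A)=\Ch(\A)/\widetilde{\A}$, which is the Verdier quotient by the thick subcategory $\widetilde{\A}$. On the other hand, in any abelian/exact model category, the homotopy category is obtained by formally inverting the class $\We$ of weak equivalences, and since every object of $\Ch(\A)$ is cofibrant in $\Delta_1$, a map in $\We$ is precisely one whose cone lies in $\widetilde{\A}$ (equivalently it becomes an isomorphism after killing $\widetilde{\A}$). Hence $\ho(\Delta_1)=\Ch(\A)[\We^{-1}]\simeq \Ch(\A)/\widetilde{\A}=\D(\A)$, which gives the first triangulated equivalence.

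For the final identification, I would use the standard fact that the homotopy category of a (WIC) exact model structure is triangle-equivalent to the stable category of the Frobenius exact category of bifibrant objects, with projective--injective objects exactly the trivial bifibrant ones. The bifibrant class here is $dw\A\cap(dg\B\cap dw\A)=dg\B\cap dw\A$. Its trivial part is $\widetilde{\A}\cap dg\B\cap dw\A$, and by Proposition~\ref{P:description-tilde}(4) we have $\widetilde{\A}\cap dg\B=\widetilde{\A}\cap\E\cap dg\B=\widetilde{\A}\cap\widetilde{\B}$ (which already sits inside $dw\A$). Together with the previous step this yields
\[
\D(\A)\simeq \ho(\Delta_1)\simeq \frac{dg\B\cap dw\A}{\widetilde{\A}\cap\widetilde{\B}}.
\]

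The only non-formal point is the identification of the homotopy category with this bifibrant stable category; but this is precisely the content of Gillespie's extension of Hovey's correspondence already used in the companion Theorem~\ref{T:derived-B} and recalled around Theorem~\ref{T:correspondence}, so it passes through without extra work. Thus the proof is essentially a one-line appeal to Example~\ref{E:exam-A-inj}(1) plus Theorem~\ref{T:correspondence}, completely parallel to the proof of Theorem~\ref{T:derived-B}.
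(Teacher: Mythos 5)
Your proposal is correct and follows essentially the same route as the paper: the proof there is just the one-line observation that Example~\ref{E:exam-A-inj}~(1) gives the injective cotorsion pair $(\widetilde{\A},\, dg\B\cap dw\A)$ in $\Ch(\A)$, which by the Hovey--Gillespie correspondence (Theorem~\ref{T:correspondence}) is exactly the exact model structure $(dw\A,\widetilde{\A},dg\B\cap dw\A)$, with the stated equivalences being the standard identification of $\Ho$ with the stable category of bifibrant objects that you spell out. (The paper's printed proof actually contains a copy-paste slip, restating the $\B$-side data of Theorem~\ref{T:derived-B}; your correctly dualized version, including the computation $\widetilde{\A}\cap dg\B\cap dw\A=\widetilde{\A}\cap\widetilde{\B}$ via Proposition~\ref{P:description-tilde}~(4), is the intended argument.)
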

 \begin{proof} By Example~\ref{E:exam-A-inj}~(1) in $\Ch(\A)$ we have the injective cotorsion pair $\Big(dg{\A}\cap dw \B, \widetilde{\B}\Big)$, hence the exact model structure
 $\Big(dg{\A}\cap dw \B, \widetilde{\B}, dw \B\Big)$.
 \end{proof}
 Another way to obtain the exact model structure of Theorem~\ref{T:derived-A} is to use results by Gillespie in \cite{G4}, \cite{G8} and \cite{G9}.

 \begin{thm}\label{T:Gill-A}  Let $(\A, \B)$ be a complete hereditary cotorsion pair in $\G$ such that $\A$ is deconstructible.  The two cotorsion pairs
 $(dw\A, dw\A{}^\perp{} )$ and $(\tilde\A, dg\B)$ in $\Ch(\G)$ are hereditary and complete and give rise to a cofibrantly generated model structure $\N=\Big(dw\A, \W, dg\B\Big)$ in $\Ch(\G)$ satisfying $\W\cap dw\A=\tilde\A$ and $\W\cap dg\B=dw\A{}^\perp{} $ whose restriction in $\Ch(\A)$ is the exact model structure $\N_1=\Big(dw{\A}, \widetilde{\A},  dw \A\cap dg\B\Big)$ of Theorem~\ref{T:derived-A}.

\end{thm}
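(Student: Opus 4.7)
The plan is to mirror the proof of Theorem~\ref{T:Gill-B} in dual form. The first step is to verify that the two cotorsion pairs $(dw\A, dw\A^\perp)$ and $(\tilde\A, dg\B)$ in $\Ch(\G)$ are complete, hereditary, and generated by a set. The second pair is complete and hereditary by Proposition~\ref{P:description-tilde}(3); the first is a cotorsion pair by Proposition~\ref{P:complete?}(1), and the deconstructibility hypothesis on $\A$ feeds into Proposition~\ref{P:complete?}(3) to make $(dw\A, dw\A^\perp)$ generated by a set, hence complete by Lemma~\ref{lem1_2}. For $(\tilde\A, dg\B)$ being generated by a set, the same references \cite[Propositions 4.3 and 4.4]{G4} cited in Theorem~\ref{T:Gill-B} apply dually; alternatively, since $\A$ is deconstructible, so is $\tilde\A$ via \cite[Theorem 4.2]{St10-deconstr} in its dual form, and $\tilde\A$ contains a generator of $\Ch(\G)$, so the claim follows again from Lemma~\ref{lem1_2}.

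Having the two small, complete, hereditary cotorsion pairs in hand, the next step is to combine them using \cite[Theorem 1.1]{G9}. The compatibility needed is the inclusion $\tilde\A \subseteq dw\A$ (immediate from the definitions), equivalently $dw\A^\perp \subseteq dg\B$ upon taking right orthogonals. Gillespie's theorem then yields a model structure $\N = (dw\A, \W, dg\B)$ on $\Ch(\G)$ in which the class $\W$ of weak equivalences is thick and automatically satisfies the identities $\W \cap dw\A = \tilde\A$ and $\W \cap dg\B = dw\A^\perp$, these being forced by the Hovey correspondence applied to each of the two original cotorsion pairs. Cofibrant generation follows from \cite[Section 7.4]{Hov02}, exactly as in the dual theorem.

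For the restriction claim, I would observe that $\Ch(\A)$ is contained in $dw\A$, so every object of $\Ch(\A)$ is cofibrant with respect to $\N$. Restricting the model structure to this exact subcategory therefore gives the classes $dw\A$, $\W \cap dw\A = \tilde\A$, and $dg\B \cap dw\A$ of cofibrations, weak equivalences, and fibrations respectively. These are precisely the data of $\N_1 = (dw\A, \tilde\A, dw\A \cap dg\B)$ furnished by Theorem~\ref{T:derived-A} via the injective cotorsion pair $\Delta_1$, so $\N$ indeed restricts to $\N_1$ on $\Ch(\A)$.

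The main obstacle I anticipate is confirming that $(\tilde\A, dg\B)$ is generated by a set, since this is the only step that is not a direct citation within the excerpt; however it is a standard consequence of the deconstructibility of $\A$ combined with \cite[Theorem 4.2]{St10-deconstr} and Lemma~\ref{lem1_2}. Once this is in place, the rest is a formal application of \cite[Theorem 1.1]{G9} and \cite[Section 7.4]{Hov02} identical in shape to that of Theorem~\ref{T:Gill-B}, with only the left/right orthogonals and the roles of $\A$ and $\B$ interchanged.
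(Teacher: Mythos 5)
Your proposal is correct and follows essentially the same route as the paper: smallness of both cotorsion pairs via deconstructibility of $dw\A$ and $\tilde\A$ (the paper cites Proposition~\ref{P:complete?}~(3) and Proposition~\ref{P:description-tilde}~(3) for this), heredity inherited from $(\A,\B)$, the model structure from \cite[Theorem 1.1]{G9}, and cofibrant generation from \cite[Section 7.4]{Hov02}. Your additional details on the restriction to $\Ch(\A)$ and on why $(\tilde\A, dg\B)$ is generated by a set are consistent elaborations of steps the paper leaves implicit.
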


 \begin{proof} The smallness of the cotorsion pairs $(dw\A, dw\A{}^\perp{} )$ and $(\tilde\A, dg\B)$ follow by the fact that $dw\A$ and $\tilde\A$ are deconstructible in $\Ch(\G)$ (see  Proposition~\ref{P:complete?}~(3) and ~\ref{P:description-tilde}~(3))  and they are hereditary since $(\A, \B)$ is hereditary. The existence of the model structure $\N$ in $ \Ch(R)$ follows by \cite[Theorem 1.1]{G9}. The fact that the model structure is cofibrantly generated follows by \cite[Section 7.4]{Hov02}.
\end{proof}
  \section{Projective cotorsion pairs in the exact category $\Ch(\A)$}
   In this section we state results dual to the ones in Section~\ref{S:B-2}.
 First we state the analogous of \cite[Proposition 7.2 ]{G7}.

 \begin{prop}\label{P:7.2-Groth} Let $\G$ be a Grothendieck category with enough projective objects and let $(\clP, \W)$ be a projective cotorsion pair in $\Ch(\G)$ with $\clP\subseteq dw\Proj$.
 Let $(\A, \B)$ be a complete hereditary cotorsion pair in $\G$. Then,
 \[(\clP, \W\cap dw\A)\] is a projective cotorsion pair in $\Ch(\A)$ and \[\Big([\clP]_K, \W\cap dw \A\Big)\] is a localizing cotorsion pair in the Frobenius category $\Ch(\A)_{dw}$. A complex $X\in \Ch(\A)$ is in $[\clP]_K$ if and only if it is chain homotopy equivalent to a complex in $P\in \clP$.
 \end{prop}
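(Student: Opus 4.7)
The plan is to mirror the proof of Proposition~\ref{P:prop7.2}, invoking Proposition~\ref{P:inducing} for the first assertion and (the dual of) \cite[Theorem 6.3]{G7} for the second. Since the cotorsion pair $(\A,\B)$ is hereditary, $\A$ is a resolving subcategory of $\G$, and this passes to complexes: given a short exact sequence $0\to X\to Y\to Z\to 0$ in $\Ch(\G)$ with $Y,Z\in dw\A$, closure of $\A$ under kernels of admissible epimorphisms forces $X^n\in\A$ in each degree, so $\Ch(\A)$ is extension closed and resolving in $\Ch(\G)$.

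First I would establish that $(\clP,\W\cap dw\A)$ is a complete cotorsion pair in $\Ch(\A)$. The inclusion $\clP\subseteq dw\Proj\subseteq dw\A$ places $\clP$ inside the subcategory $\Ch(\A)$, so Proposition~\ref{P:inducing} (applied in the resolving case with $\D=\Ch(\A)\supseteq \clP$) yields the completeness of $(\clP,\W\cap dw\A)$ in the exact category $\Ch(\A)$. To upgrade this to a \emph{projective} cotorsion pair one has to check that $\W\cap dw\A$ is thick in $\Ch(\A)$ and contains the projectives of $\Ch(\A)$. Thickness is inherited from the thickness of $\W$ in $\Ch(\G)$: closure under summands is immediate, while the $2$-out-of-$3$ property on a $\Ch(\A)$-conflation follows from $2$-out-of-$3$ for $\W$ in $\Ch(\G)$ together with the fact that the missing term already lies in $dw\A$. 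For the containment of projectives, Fact~\ref{F:proj-inj}(3) identifies the projectives of $\Ch(\A)$ with those of $\Ch(\G)$, and $\W$ contains the latter because $(\clP,\W)$ is projective in $\Ch(\G)$.

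For the second assertion I would appeal to the version of \cite[Theorem 6.3]{G7} for projective cotorsion pairs, as already cited in the proof of Proposition~\ref{P:prop7.3-dual}. By Fact~\ref{F:proj-inj}(5), $\Ch(\A)_{dw}$ (the degreewise split exact structure) is a Frobenius category whose projective-injective objects are exactly the contractible complexes in $dw\A$, so in view of Remark~\ref{R:inj-proj-model} injective and projective cotorsion pairs coincide there and are detected by thickness. The cited theorem (applied to the projective cotorsion pair $(\clP,\W\cap dw\A)$ in $\Ch(\A)$) produces the localizing cotorsion pair $\bigl([\clP]_K,\,\W\cap dw\A\bigr)$ in $\Ch(\A)_{dw}$, where the left class must be enlarged to its closure under chain homotopy equivalence because this is precisely the condition forced by switching to the degreewise split exact structure.

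The main obstacle, as in the dual setting, is the second statement: one has to know that a projective-cotorsion-pair analogue of \cite[Theorem 6.3]{G7} holds. This is acknowledged in the paper (see the remark at the end of the proof of Proposition~\ref{P:prop7.3-dual}) and the verification is formal once one observes that the Frobenius structure of $\Ch(\A)_{dw}$ makes the projective and injective versions of the statement symmetric. The first statement, by contrast, is a direct application of Proposition~\ref{P:inducing} together with Fact~\ref{F:proj-inj}(3), so no genuine computation is required.
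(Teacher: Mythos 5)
Your proposal is correct and follows the same route as the paper: Proposition~\ref{P:inducing} (resolving case, using $\clP\subseteq dw\Proj\subseteq dw\A$) for the first assertion, and the dual of \cite[Theorem 6.3, Proposition 6.4]{G7} for the localizing cotorsion pair in $\Ch(\A)_{dw}$. The only difference is that you spell out the thickness of $\W\cap dw\A$ and the identification of projectives via Fact~\ref{F:proj-inj}(3), which the paper dismisses as ``automatic''.
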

 \begin{proof} $(\clP, \W\cap dw\A)$ is a complete cotorsion pair by Proposition~\ref{P:inducing} and it is automatically a projective cotorsion pair. The second statement follows by the dual of \cite[Theorem 6.3, Proposition 6.4]{G7}.
\end{proof}
\begin{expl}\label{E:exam-A-proj}

Starting with a complete hereditary cotorsion pair $(\A, \B)$ in a Grothendieck  category $\G$ with enough projective objects, we exhibit three examples of projective cotorsion pairs in $\Ch(\G)$ satisfying the assumptions of Proposition~\ref{P:7.2-Groth}.

 \begin{enumerate}
\item  By Proposition~\ref{P:complete?}~(3), $(dw\Proj, dw \Proj{}^\perp{})$ is a complete cotorsion pair  in $\Ch(\G)$, and it is a projective cotorsion pair. By Proposition~\ref{P:7.2-Groth},  we have the projective cotorsion pair:
 \[\Gamma_1=\Big(dw\Proj, dw \Proj{}^\perp{}\cap dw\A\Big)\] in $\Ch(\A)$ and the localizing cotorsion pair $\Big([dw\Proj]_K, dw \Proj{}^\perp{}\cap\A\Big)$ in $\Ch(\A)_{dw}.$
 \item By Proposition~\ref{P:complete?}~(6), $(ex\Proj, ex\Proj{}^\perp{})$ is a projective cotorsion pair in $\Ch(\G)$ and by Proposition~\ref{P:7.2-Groth} we have the projective cotorsion pair:
  \[\Gamma_2=\Big(ex\Proj, ex \Proj{}^\perp{}\cap dw\A\Big)\] in $\Ch(\A)$ and the localizing cotorsion pair $\Big([ex\Proj]_K, ex \Proj{}^\perp{}\cap\A\Big)$ in $\Ch(\A)_{dw}.$
  \item Since $(dg\Proj, \E )$ is a projective cotorsion pair in $\Ch(\G)$, by Proposition~\ref{P:7.2-Groth} we have the projective cotorsion pair:
  \[\Gamma_3=\Big(dg\Proj, ex\A\Big)\]  in $\Ch(\A)$ and the localizing cotorsion pair $\Big([dg\Proj]_K, ex\A\Big)$ in $\Ch(\A)_{dw}.$
\end{enumerate}
\end{expl}

 The above three examples $\Gamma_1, \Gamma_2, \Gamma_3$ of projective cotorsion pairs  in $\Ch(\A)$ satisfy the assumptions of \cite[Theorem 3.5]{G7}. Hence we obtain:
  \begin{thm}\label{T:recollement-proj-A} If $\G$ is a Grothendieck category with enough projective objects and $(\A, \B)$ is a complete hereditary cotorsion pair in $\G$, there is a recollement %
\vskip 0.7cm
\[
\xymatrix{\dfrac{ex \Proj}{\sim} \ar[rr]^{inc} &&\dfrac{dw\Proj}{\sim} \ar@/^2pc/
[ll]\ar@/_2pc/[ll] \ar[rr]^{Q}
&&{\Ch(\A)/ex\A}\ar@/^2pc/ [ll] \ar@/_2pc/ [ll] }
\]
\vskip 0.7cm

where $\sim$ denotes the homotopy relation associated to the corresponding model structure and coincides also with the chain homotopy relation; moreover, $inc$ is the inclusion, $Q$ is the quotient functor.  In particular, the central term is the chain homotopy category $K(\Proj)$ of the complexes with projective components and the right hand term is equivalent to the derived category of $\G$.
\end{thm}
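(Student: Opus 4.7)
The plan is to mirror the argument of Theorem~\ref{T:recollement-inj}, dualized to the projective side, by applying Gillespie's \cite[Theorem 3.5]{G7} to the three projective cotorsion pairs $\Gamma_1, \Gamma_2, \Gamma_3$ of Example~\ref{E:exam-A-proj}. First, one observes that the hypothesis that $\G$ has enough projectives, together with the fact that $(\A,\B)$ is hereditary, forces $\Proj \subseteq \A$; consequently all three $\Gamma_i$ are well-defined complete projective cotorsion pairs on the exact category $\Ch(\A)$ by Proposition~\ref{P:7.2-Groth}, and $\Ch(\A)_{dw}$ is Frobenius by Fact~\ref{F:proj-inj}~(5).

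Next I would verify the compatibility relations required by \cite[Theorem 3.5]{G7}. Writing $\Gamma_i = (\C_i, \W_i)$ so that $\C_1 = dw\Proj$, $\C_2 = ex\Proj$, $\C_3 = dg\Proj$, and $\W_3 = ex\A$, the crucial nesting $\C_2 \subseteq \W_3$ follows from $\Proj \subseteq \A$, which gives $ex\Proj \subseteq ex\A$; the inclusion $\C_2 \subseteq \C_1$ is immediate from $ex\Proj \subseteq dw\Proj$; and the intersection identity $\W_2 \cap \W_3 = \W_1$ (dual to that used in Remark following Theorem~\ref{T:recollement-inj}) reduces to showing $ex\Proj^\perp \cap ex\A = dw\Proj^\perp \cap dw\A$ inside $\Ch(\A)$, which can be obtained by the dual of the argument employed in \cite[Theorem 4.7]{G4} combined with the fact that $ex\Proj = dw\Proj \cap \E$. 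This is the step I expect to occupy the most attention, as it must be carried out carefully in the exact category $\Ch(\A)$ rather than in $\Ch(\G)$.

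Once the compatibility is verified, \cite[Theorem 3.5]{G7} delivers a recollement among the three homotopy categories. It remains to identify each term. For a projective cotorsion pair $(\C,\W)$ on the Frobenius category $\Ch(\A)_{dw}$, the associated homotopy category is the stable category $\C/\sim$, where $\sim$ coincides with chain homotopy equivalence (because a map in $\Ch(\A)$ is null homotopic if and only if it factors through a contractible complex, by the dual of Remark~\ref{R:proj-recoll}). Thus $\Gamma_2$ supplies the left term $ex\Proj/\sim$, $\Gamma_1$ gives the middle term $dw\Proj/\sim$ which is precisely the chain homotopy category $K(\Proj)$, and $\Gamma_3$ yields the right term $\Ch(\A)/ex\A$, together with the induced adjoint functors $inc$ and $Q$.

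Finally, to match the right-hand term with $\D(\G)$, I would invoke the fact (used already in the introduction and in Theorem~\ref{T:derived-A}) that $\Ch(\A)$ contains all dg-projective complexes of $\Ch(\G)$, since $\Proj \subseteq \A$ and $\G$ has enough projectives; hence every complex in $\Ch(\G)$ admits a dg-projective resolution lying in $\Ch(\A)$, and the localization of $\Ch(\A)$ at $ex\A$ is triangle-equivalent to the usual derived category $\D(\G)$. Putting this together with the previous step completes the identification of all terms in the recollement and finishes the proof.
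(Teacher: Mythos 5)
Your proposal follows exactly the paper's route: the paper proves this theorem by observing that the three projective cotorsion pairs $\Gamma_1,\Gamma_2,\Gamma_3$ of Example~\ref{E:exam-A-proj} satisfy the hypotheses of \cite[Theorem 3.5]{G7}, with the compatibility conditions ($ex\Proj\subseteq ex\A$ and the intersection identity via the dual of \cite[Theorem 4.7]{G4}) verified dually to Theorem~\ref{T:recollement-inj}, and then identifies the terms just as you do. Your write-up correctly fills in the same details the paper leaves to duality, so it is a faithful (slightly more explicit) version of the paper's argument.
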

Moreover, we have:

\begin{thm}\label{T:triple-A} In the assumptions of Theorem~\ref{T:recollement-proj-A}, the triple
\[\Big([dg\Proj]_K, ex\A, [ex\A{}^\perp{}\cap dw\A]_K\Big)\] is a localizing cotorsion triple in $\Ch(\A)_{dw}$ and there are equivalences of triangulated categories:
\[\frac{[dg\Proj]_K}{\sim} \cong \frac{\Ch(\A)}{ex\A} \cong  \frac{[ex\A{}^\perp{}\cap dw\A]_K}{\sim}\]
where $\sim$ is the chain homotopy equivalence
and a recollement:
\vskip0.7cm
\[
\xymatrix{\dfrac{ex\A}{\sim} \ar[rr]^{inc} &&{K(\A)} \ar@/^2pc/[ll]\ar@/_2pc/[ll] \ar[rr]^{Q}
&&{\dfrac{\Ch(\A)}{ex\A }}\ar@/^2pc/ [ll] \ar@/_2pc/ [ll] }
.\]
\vskip0.7cm
\end{thm}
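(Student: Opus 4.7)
The plan is to dualize the proof of Theorem~\ref{T:triple-B}. The two key inputs are the localizing cotorsion pairs in the Frobenius exact category $\Ch(\A)_{dw}$ supplied by the earlier examples. Example~\ref{E:exam-A-proj}~(3), obtained by applying Proposition~\ref{P:7.2-Groth} to the projective cotorsion pair $(dg\Proj,\E)$ in $\Ch(\G)$, yields the localizing pair $\big([dg\Proj]_K,\;ex\A\big)$; Example~\ref{E:exam-A-inj}~(3), obtained from the cotorsion pair $(ex\A,\;ex\A{}^\perp{})$ in $\Ch(\G)$ via Proposition~\ref{P:prop7.3}, yields $\big(ex\A,\;[ex\A{}^\perp{}\cap dw\A]_K\big)$. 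These two pairs glue at the common class $ex\A$ to produce the localizing cotorsion triple asserted in the statement.

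Next I would adjoin the trivial localizing cotorsion pair $(\Ch(\A)_{dw},\W)$, where $\W$ is the class of contractible complexes with terms in $\A$; by Fact~\ref{F:proj-inj}~(5) these are precisely the projective-injective objects of the Frobenius category $\Ch(\A)_{dw}$. This produces three localizing cotorsion pairs
\[
\clL_1=(\Ch(\A)_{dw},\W),\quad \clL_2=([dg\Proj]_K,\;ex\A),\quad \clL_3=(ex\A,\;[ex\A{}^\perp{}\cap dw\A]_K),
\]
to which one can apply the same machinery as in Theorem~\ref{T:recollement-proj}, namely \cite[Corollary~4.5]{G7}. This gives a recollement whose middle term is the stable category $\Ch(\A)_{dw}/\W$. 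Because a chain map in $\Ch(\A)$ is null-homotopic if and only if it factors through a contractible complex, this stable category coincides with $K(\A)$; the left term becomes $ex\A/\sim$ and the right term the Verdier quotient $\Ch(\A)/ex\A$. The three triangle equivalences displayed just before the recollement then come, for each of the two outer localizing pairs $\clL_2$ and $\clL_3$, from the standard identification of the homotopy category of an abelian model structure with the stable category of its cofibrant-fibrant objects.

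The main obstacle is ensuring that Example~\ref{E:exam-A-inj}~(3) is actually available, i.e.\ that $(ex\A,\;ex\A{}^\perp{})$ is a complete hereditary cotorsion pair in $\Ch(\G)$. By Proposition~\ref{P:complete?} together with Lemma~\ref{lem1_2}, this amounts to $\A$ being deconstructible (equivalently, $(\A,\B)$ being generated by a set). The assumption that $\G$ has enough projectives supplies $\A$ with a projective generator, which is one ingredient but not the whole story; once deconstructibility is in force the remainder of the argument is a formal dualization of Theorem~\ref{T:triple-B}.
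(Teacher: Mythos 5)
Your argument follows the paper's own proof essentially verbatim: it combines the localizing cotorsion pairs of Example~\ref{E:exam-A-proj}~(3) and Example~\ref{E:exam-A-inj}~(3), adjoins the trivial pair on the contractible complexes, and invokes \cite[Corollary 4.5]{G7} exactly as in the proof of Theorem~\ref{T:triple-B}. Your remark that Example~\ref{E:exam-A-inj}~(3) requires $\A$ to be deconstructible (a hypothesis not literally contained in the assumptions of Theorem~\ref{T:recollement-proj-A}) is a legitimate observation about a gap the paper itself glosses over, not a defect of your argument.
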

\begin{proof} By Examples~\ref{E:exam-A-inj}~(3) and Examples~\ref{E:exam-A-proj}~(3)  we have two localizing cotorsion pairs $\Big([dg\Proj]_K, ex\A\Big)$ and $\Big(ex\A, [ex\A{}^\perp{}\cap dw\A]_K)$ in $\Ch(\A)_{dw}$.  Thus, the statement follows by arguing as in the proof of Theorem~\ref{T:triple-B}.
\end{proof}

\section{When is $\tilde{\A}$ the central term of a localizing cotorsion triple in $\Ch(\A)_{dw}$?}

By Example~\ref{E:exam-A-inj}~(1) we have shown that in $\Ch(\A)$ there is an injective cotorsion pair  with $\tilde \A$
 as left term and Example~\ref{E:exam-A-proj}~(3) provides a projective cotorsion pair in $\Ch(\A)$ with right component $ex{\A}$.
 Our aim will be to find cotorsion pairs $(\A, \B)$ for which there exist a projective cotorsion pair $(\C, \widetilde{\A})$ in $\Ch(\A)$ with $
\C\subseteq dw\Proj$ in order to obtain a localizing cotorsion triple in $\Ch(\A)_{dw}$ with $\widetilde{\A}$ as central term.

The most famous example of this situation is provided by the cotorsion pair $(\Flat, \Cot)$. In fact, by \cite[Theorem 8.6]{Nee08} $dw(\Proj)^\perp\cap dw{\Flat}=\widetilde{\Flat}$. Hence, as noted by Gillespie in \cite{G7}, Example~\ref{E:exam-A-proj}~(1) provides the wanted example and induces Neeman's recollement, that is the recollement as in Theorem~\ref{T:triple-A}:
\vskip0.7cm
\[
\xymatrix{(a)\quad\dfrac{\widetilde{\Flat} }{\sim} \ar[rr]^{inc} &&{K(\Flat)} \ar@/^2pc/[ll]\ar@/_2pc/[ll] \ar[rr]^{Q}
&&{\dfrac{\Ch(\Flat)}{\widetilde{\Flat} }}\ar@/^2pc/ [ll] \ar@/_2pc/ [ll] }
.\]
\vskip0.7cm

Another case of cotorsion pairs giving rise to a result analogous to the previous one is provided by the following:
\begin{prop}\label{P:A-flat} Let $(\A, \B)$ be a complete hereditary cotorsion pair in $\Modr R$. Assume that $\A\subseteq \Flat$ and that $\A\subseteq \clP_n$, where $\clP_n$ is the class of modules of projective dimension at most $n$.
 Then, every flat $\A$-periodic module belongs to $\A$. Thus $\widetilde{\Flat}\cap dw \A=\tilde{\A}$, $ (dw\Proj)^\perp\cap dw\A=\tilde{\A}$ and there is a recollement:
 \vskip0.7cm
\[
\xymatrix{\dfrac{\widetilde{\A} }{\sim} \ar[rr]^{inc} &&{K(\A)} \ar@/^2pc/[ll]\ar@/_2pc/[ll] \ar[rr]^{Q}
&&{\dfrac{\Ch(\A)}{\widetilde{\A} }}\ar@/^2pc/ [ll] \ar@/_2pc/ [ll] }
.\]
\vskip0.7cm

 In particular, if $R$ is commutative, then the above statements apply
 to the class $\A$ of very flat modules. \end{prop}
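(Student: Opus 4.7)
The argument mirrors the proof of Proposition~\ref{P:B-inj-n} with all roles dualized: the class $\Flat$ plays the part of $\FpInj$, $\A$ replaces $\B$, projective replaces injective, and the classical input supplied by Fact~\ref{F:periodic}~(1) is replaced by the Benson--Goodearl theorem (equivalent to a special case of \cite[Theorem~8.6]{Nee08}) that every flat $\Proj$-periodic module is projective. First I would establish the key periodicity statement; then translate it to complex-level equalities via Fact~\ref{F:periodic}~(2); finally assemble the recollement using Examples~\ref{E:exam-A-proj}~(1) and~\ref{E:exam-A-inj}~(1) together with the argument of Theorem~\ref{T:triple-A}.

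For the periodicity step, starting from a witness $0\to M\to A\to M\to 0$ with $M$ flat and $A\in\A\subseteq\clP_n$, I would fix a projective presentation $0\to K\to P\to M\to 0$ and invoke the horseshoe lemma to build a $3\times 3$ commutative diagram whose middle column $0\to D\to P\oplus P\to A\to 0$ forces $D\in\A$ (by the resolving property of $\A$ together with $P\oplus P, A\in\A$) and $\operatorname{proj.dim}(D)\leq n-1$, and whose top row $0\to K\to D\to K\to 0$ exhibits $K$ as a flat $\A$-periodic module with witness $D$ of strictly smaller projective dimension; $K$ is flat because $\Tor_1(K,-)\cong\Tor_2(M,-)=0$. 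Induction on $n$, with Benson--Goodearl providing the base case $n=0$, yields $K\in\A$. The short exact sequence $0\to K\to P\to M\to 0$ with $K\in\A\subseteq\clP_n$ then shows $\operatorname{proj.dim}(M)<\infty$, and dimension shifting in the periodicity sequence, using $\Ext^i(A,B)=0$ for all $B\in\B$ and $i\geq 1$, gives $\Ext^1(M,B)\cong \Ext^{i+1}(M,B)=0$ for $i$ sufficiently large, so $M\in{}^{\perp_1}\B=\A$.

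Next I would apply Fact~\ref{F:periodic}~(2) twice. First, with $\C=\A$ and $\D=\Flat$: since $\A\subseteq\Flat$ and cycles of acyclic complexes of flat modules are flat, every $\A$-periodic module is flat. Combined with the previous step, every $\A$-periodic module lies in $\A$, and a second application with $\C=\D=\A$ delivers $ex\A=\tilde\A$; in particular $\widetilde{\Flat}\cap dw\A=\tilde\A$. Intersecting Neeman's identity $(dw\Proj)^\perp\cap dw\Flat=\widetilde{\Flat}$ of \cite[Theorem~8.6]{Nee08} with $dw\A\subseteq dw\Flat$ then gives $(dw\Proj)^\perp\cap dw\A=\tilde\A$. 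Consequently the projective cotorsion pair of Example~\ref{E:exam-A-proj}~(1) specializes to $(dw\Proj,\tilde\A)$ in $\Ch(\A)$, and coupled with the injective cotorsion pair $(\tilde\A,\, dg\B\cap dw\A)$ of Example~\ref{E:exam-A-inj}~(1) it produces a localizing cotorsion triple in $\Ch(\A)_{dw}$ with centre $\tilde\A$; the recollement then follows exactly as in the proof of Theorem~\ref{T:triple-A}. The very flat application is immediate: over a commutative ring the class in question is the left side of the cotorsion pair generated by $\clS=\{R[s^{-1}]:s\in R\}$, and each $R[s^{-1}]$ is flat with a two-step free resolution $0\to R^{(\bbN)}\to R^{(\bbN)}\to R[s^{-1}]\to 0$, so $\clS\subseteq\Flat\cap\clP_1$; since both classes are preserved under transfinite extensions, the very flat class lies in $\Flat\cap\clP_1$ and the proposition applies with $n=1$.

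The principal obstacle is the periodicity step: carrying out the horseshoe induction requires simultaneously tracking flatness of $K$ and the strict drop of projective dimension of the new witness $D$, and identifying the correct classical base case (flat $\Proj$-periodic implies projective) rather than a weaker Tor-vanishing surrogate; once that is in hand the complex-level equalities and the recollement follow by already established machinery.
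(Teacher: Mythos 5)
Your core periodicity argument (horseshoe lemma on the projective presentation, descent on the projective dimension of the witness with Benson--Goodearl as the base case, then dimension shifting along the periodicity sequence to get $\Ext^1(M,B)\cong\Ext^{i+1}(M,B)=0$) is exactly the paper's argument, just phrased as an explicit induction rather than a ``w.l.o.g.\ $\operatorname{p.dim}A\leq 1$'' reduction, and your assembly of the recollement from Examples~\ref{E:exam-A-proj}~(1) and \ref{E:exam-A-inj}~(1) and the intersection with Neeman's identity also matches the paper. The filled-in detail for very flat modules is correct and is omitted in the paper.

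However, there is a genuine error in your passage to the complex level. You assert that ``cycles of acyclic complexes of flat modules are flat'' and deduce that every $\A$-periodic module is flat, hence that $ex\A=\tilde\A$. Both claims are false in general under the stated hypotheses: over $R=k[x]/(x^2)$ the pair $(\Proj,\Modr R)$ satisfies $\A\subseteq\Flat\cap\clP_0$, yet the acyclic complex $\cdots\to R\xrightarrow{x}R\xrightarrow{x}R\to\cdots$ lies in $ex\Proj\subseteq ex\Flat$ and has non-flat cycles $k$; equivalently, $k$ is $\Proj$-periodic but not flat. Indeed the statement $ex\Flat=\widetilde\Flat$ is precisely what fails in Neeman's setting and is the reason this section treats $\widetilde\A$ separately from $ex\A$; the proposition deliberately claims only $\widetilde{\Flat}\cap dw\A=\tilde\A$, not $ex\A=\tilde\A$. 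Since you derive the needed identity $\widetilde{\Flat}\cap dw\A=\tilde\A$ ``in particular'' from the false one, that step of your proof is broken as written. The repair is short and is what the paper does: given $Y\in\widetilde{\Flat}\cap dw\A$, the module $\bigoplus_n Z^n(Y)$ is flat (because $Y\in\widetilde{\Flat}$, so its cycles are flat by hypothesis, not by any general fact about $ex\Flat$) and is $\A$-periodic via $0\to\bigoplus_n Z^n(Y)\to\bigoplus_n Y^n\to\bigoplus_n Z^n(Y)\to 0$; your periodicity theorem then puts it, and hence each summand $Z^n(Y)$, in $\A$. With that substitution the rest of your argument, including $(dw\Proj)^\perp\cap dw\A=\tilde\A$ by intersecting Neeman's $dw\Proj^\perp\cap dw\Flat=\widetilde{\Flat}$ with $dw\A$, goes through.
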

 \begin{proof} Let $0\to F\to A\to F\to 0$ be an exact sequence with $F$ flat and $A\in \A$. Let $0\to F_1\to P\to F\to 0$ be an exact sequence with $P$ projective; then $F_1$ is flat. As in \cite{Sto16}, an application of the horseshoe lemma gives the following commutative diagram:
 \[\xymatrix{
&0 \ar[d]  & 0 \ar[d] & 0 \ar[d] \\
0 \ar[r] & F_1 \ar[d] \ar[r] &Q\ar[r] \ar[d] & F_1\ar[d]  \ar[r] & 0 \\
0 \ar[r] & P \ar[d] \ar[r] &P\oplus P\ar[r] \ar[d] & P\ar[d]  \ar[r] & 0  \\
0 \ar[r] & F \ar[d]\ar[r] &A\ar[r] \ar[d] & F\ar[d]  \ar[r] & 0 \\
&0 & 0 & 0
}
\]
where  $Q\in \A$, since $\A$ is resolving. We have p.dim $Q=$ p.dim $A-1$, hence w.l.o.g. we can assume that in our starting sequence  $0\to F\to A\to F\to 0$ $A$ has projective dimension at most $1$. Thus, in the above diagram we have that $Q$ is projective and by \cite{BG}, $F_1$ is projective. The latter implies that p.dim $F\leq 1$.
Let $B\in \B.$ Then $0=\Ext^1_R(A, B)\to \Ext^1_R(F, B)\to \Ext^2_R(F, B)=0$, hence $F\in \A$ and
$\widetilde{\Flat}\cap dw \A=\tilde{\A}$ by Fact~\ref{F:periodic}~(2).  Now the equality $ (dw\Proj)^\perp\cap dw\A=\tilde{\A}$ is obtained by intersecting with $dw\A$ the equality $dw(\Proj)^\perp\cap dw{\Flat}=\widetilde{\Flat}$ from \cite[Theorem 8.6]{Nee08}.

The arguments used above to obtain the recollement $(a)$ for the cotorsion pair $(\Flat, \Cot)$ can be repeated for the case of the cotorsion pair $(\A, \B)$ in our assumption to obtain the stated recollement.\end{proof}

Another situation is provided by Proposition~\ref{P:finite-proj-dim}.
\begin{prop}\label{P:new-triple-A}
Let $\G$ be a Grothendieck category with enough projective objects. Let $(\A, \B)$ be a complete hereditary cotorsion pair in $\G$ with $\B\subseteq \I$ ($\I$ the class of objects with finite injective dimension). Then $ex\A=\tilde\A$, hence $(dg\Proj, \tilde{\A})$ is a projective cotorsion pair in $\Ch(\A)$ and there is a recollement as in Theorem~\ref{T:triple-A} with $ex \A$ replaced by $\tilde\A$.
 In particular, the derived category $\D(\A)$ of $\A$ is equivalent to the usual derived category of $\G$.
 \end{prop}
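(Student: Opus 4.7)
The plan is to reduce this statement to results already established in the paper, since essentially everything has been set up. The key observation is that the equality $ex\A=\tilde\A$ is the dual part of Proposition~\ref{P:finite-proj-dim}, and once we have it, Theorem~\ref{T:triple-A} does the rest.

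First, I would invoke Proposition~\ref{P:finite-proj-dim}(4) directly. The hypothesis $\B\subseteq \I$ is exactly what is needed there, and it yields $ex\A=\tilde\A$. The argument behind it is the dual dimension-shifting used for Proposition~\ref{P:finite-proj-dim}(2): for any acyclic $X\in dw\A$ and any $B\in \B$ of finite injective dimension, applying $\Hom_{\G}(-,B)$ to the positive half of $X$ and shifting shows $\Ext^i_{\G}(Z^j(X),B)=0$ for all $i\geq 1$, so the cycles $Z^j(X)$ lie in ${}^\perp\B=\A$.

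Second, with $ex\A=\tilde\A$ in hand I would substitute this equality into Example~\ref{E:exam-A-proj}(3): that example produces the projective cotorsion pair $\Gamma_3=(dg\Proj,ex\A)$ in $\Ch(\A)$, which now reads $(dg\Proj,\tilde\A)$. The standing hypothesis that $\G$ has enough projectives is the one required by Example~\ref{E:exam-A-proj} and by Theorem~\ref{T:triple-A}, so no extra assumption is needed.

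Third, I would apply Theorem~\ref{T:triple-A} verbatim and then replace every occurrence of $ex\A$ by $\tilde\A$, using the equality just proved. This yields the localizing cotorsion triple $\bigl([dg\Proj]_K,\tilde\A,[\tilde\A^\perp\cap dw\A]_K\bigr)$ in $\Ch(\A)_{dw}$ and the recollement
\[
\xymatrix{\dfrac{\widetilde{\A}}{\sim} \ar[rr]^{inc} &&{K(\A)} \ar@/^2pc/[ll]\ar@/_2pc/[ll] \ar[rr]^{Q}
&&{\dfrac{\Ch(\A)}{\widetilde{\A}}}\ar@/^2pc/ [ll] \ar@/_2pc/ [ll]}
\]
as stated. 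For the final claim, recall that by Neeman the derived category of the exact category $\A$ is precisely $\Ch(\A)/\tilde\A$, so the right-hand term above is $\D(\A)$. On the other hand, Theorem~\ref{T:triple-A} also gives an equivalence $\Ch(\A)/ex\A\cong [dg\Proj]_K/\sim$, and the latter is the usual homotopy category of dg-projective complexes, which is equivalent to $\D(\G)$ via the standard dg-projective resolution (this uses enough projectives in $\G$). Combining with $ex\A=\tilde\A$ produces the equivalence $\D(\A)\cong \D(\G)$.

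There is essentially no obstacle here: the statement is a formal corollary of Proposition~\ref{P:finite-proj-dim}(4), Example~\ref{E:exam-A-proj}(3) and Theorem~\ref{T:triple-A}. The only point requiring mild care is the last assertion, where one must remember that Neeman's definition of the derived category of an idempotent complete exact category coincides with $\Ch(\A)/\tilde\A$ (not $\Ch(\A)/ex\A$), so one genuinely needs the equality $ex\A=\tilde\A$ to identify the right-hand term of the recollement with $\D(\A)$ and then with $\D(\G)$.
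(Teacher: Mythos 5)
Your proposal is correct and follows exactly the route the paper intends: the paper gives no separate proof for this proposition precisely because it is the formal combination of Proposition~\ref{P:finite-proj-dim}(4), Example~\ref{E:exam-A-proj}(3) and Theorem~\ref{T:triple-A} that you describe. Your closing remark about needing $ex\A=\tilde\A$ to identify the right-hand term with Neeman's $\D(\A)=\Ch(\A)/\tilde\A$ is also the correct and relevant subtlety.
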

\begin{cor}\label{C:recoll-cotilting} Let $(\C, \B)$ be an $n$-cotilting cotorsion pair in $\Modr R$. For the cotilting class $\C$ we have a recollement:
\vskip0.7cm

\[
\xymatrix{\dfrac{\widetilde{\C}}{\sim} \ar[rr]^{inc} &&{K(\C)} \ar@/^2pc/[ll]\ar@/_2pc/[ll] \ar[rr]^{Q}
&&{\dfrac{\Ch(\C)}{\widetilde{\C} }}\ar@/^2pc/ [ll] \ar@/_2pc/ [ll] }
.\]
\vskip0.7cm
\end{cor}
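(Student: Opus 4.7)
The statement is the cotilting dual of Corollary~\ref{C:recoll-tilting}, and my plan is to derive it by combining Proposition~\ref{P:tilt-cotil} with Proposition~\ref{P:new-triple-A}, in exact analogy with how the tilting version followed from Proposition~\ref{P:tilt-cotil} and Proposition~\ref{P:new-triple}.

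First, I would record the set-up: the module category $\Modr R$ is a Grothendieck category with enough projective objects, and $(\C,\B)$ is a complete hereditary cotorsion pair in it (completeness and hereditariness being part of the standing properties of cotilting cotorsion pairs). This puts us in the hypotheses of Proposition~\ref{P:new-triple-A}, provided we can verify that $\B\subseteq\I$.

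Second, I would verify the injective-dimension hypothesis. This is precisely the content used in the proof of Proposition~\ref{P:tilt-cotil}: since $(\C,\B)$ is an $n$-cotilting cotorsion pair, one has $\B\subseteq \I_n$ by \cite[Lemmas~13.10, 15.4]{GT12}, so in particular $\B\subseteq\I$. Then Proposition~\ref{P:finite-proj-dim}(4) (equivalently, Proposition~\ref{P:tilt-cotil} already phrased for this setting) gives $ex\C=\widetilde{\C}$ in $\Ch(R)$.

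Third, I would invoke Proposition~\ref{P:new-triple-A} directly: under the hypothesis $\B\subseteq\I$ for a complete hereditary cotorsion pair $(\A,\B)=(\C,\B)$ in a Grothendieck category with enough projectives, we obtain the recollement of Theorem~\ref{T:triple-A} with $ex\C$ replaced by $\widetilde{\C}$, that is, precisely the displayed recollement in the corollary. There is no real obstacle here: the cotilting hypothesis is used only to enter the scope of Propositions~\ref{P:tilt-cotil} and~\ref{P:new-triple-A}, after which the desired recollement is an immediate specialisation. The only thing worth double-checking is that the conventions on the cotilting pair match (namely, that in $(\C,\B)$ the right-hand class $\B$ is the one of bounded injective dimension), but this is exactly the convention used in Proposition~\ref{P:tilt-cotil}.
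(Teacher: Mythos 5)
Your proposal is correct and follows exactly the paper's own argument: Proposition~\ref{P:tilt-cotil} (via $\B\subseteq\I_n$) gives $ex\C=\widetilde{\C}$, and Proposition~\ref{P:new-triple-A} then yields the recollement. The extra checks you perform (enough projectives in $\Modr R$, the convention that the right-hand class of the cotilting pair has bounded injective dimension) are sound and merely make explicit what the paper leaves implicit.
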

\begin{proof} By Proposition~\ref{P:tilt-cotil}, $ex\C=\widetilde{\C}$, hence the conclusion follows by Proposition~\ref{P:new-triple-A}.
\end{proof}
\bibliographystyle{alpha}
\bibliography{references}
\end{document}